\newcommand{\interior}[1]{\stackrel{\circ}{#1}}
\newcommand{\R}{\mathbb{R}}
\newcommand{\N}{\mathbb{N}}
\newcommand{\G}{\mathcal{G}}
\DeclareMathOperator{\Cat}{Cat}
\DeclareMathOperator{\diam}{diam}
\DeclareMathOperator{\qstraight}{str_{\partial}}
\newcommand{\fl}{\mathit{fl}}
\newtheorem{Def}{Definition}[section]
\newtheorem{Lem}{Lemma}[section]
\newtheorem{Rmk}{Remark}[section]
\newtheorem{Cor}{Corollary}[section]
\newtheorem{Prop}{Proposition}[section]
\newtheorem{Thm}{Theorem}[section]
\newtheorem*{Thm*}{Theorem}
\newtheorem*{Prop*}{Proposition}
\newtheorem*{Cor*}{Corollary}
\newtheorem*{Lem*}{Lemma}
\begin{document}
%\footnote{ \today}
\title{Typical geodesics on flat surfaces}
\author{Klaus Dankwart}
\begin{abstract}
We investigate typical behavior of geodesics on a closed flat surface $S$ of genus $g\geq 2$. We compare the length quotient of long arcs in the same homotopy class with fixed endpoints for the flat and the hyperbolic metric in the same conformal class. This quotient is asymptotically constant $F$ a.e. We show that $F$ is bounded from below by the inverse of the volume entropy $e(S)$. \\
Moreover, we construct a geodesic flow together with a measure on $S$ which is induced by the Hausdorff measure of the Gromov boundary of the universal cover. Denote by $e(S)$ the volume entropy of $S$ and let $c$ be a compact geodesic arc which connects singularities. We show that a typical geodesic passes through $c$ with frequency that is comparable to $\exp(-e(S)l(c))$. Thus a typical bi-infinite geodesic contains infinitely many singularities, and each geodesic between singularities $c$ appears infinitely often with a frequency proportional to $\exp(-e(S)l(c))$. 
 \end{abstract}
\maketitle
%graphic: light gray 32 dark gray 80
\section{Introduction}
In the study of the Teichm\"{u}ller space of closed Riemann surfaces, half-translation structures play a central role. A half-translation structure on a surface defines a flat metric, see Section \ref{sectflatmetric} for details. \\ 
The geometry of such flat metrics and their relationship to the hyperbolic metric in the same conformal class have been investigated in \cite{Duchin2010, Rafi2007}. We are interested in the asymptotic behavior of geodesics on $S$.\\ 
In the first part of this note we compare the behavior of geodesics for the flat metric $S=(X,d_\fl)$ and the hyperbolic metric $\sigma$ in the same conformal class with respect to the Liouville measure of $\sigma$. 
For a free homotopy classes of closed curves $[\alpha]$ in $X$, denote by $l_*([\alpha]),~*=d_\fl,\sigma$ the infimum of the length of curves for the hyperbolic resp. the flat metric. If $[\alpha]$ is a homotopy class of arcs with fixed endpoints then $l_*([\alpha]),~*=d_\fl,\sigma$ is the infimum of the length of arcs in the homotopy class.\\
Let $g_t:T^1X \to T^1X$ be the geodesic flow for $\sigma$. For $v \in T^1X$ the flow line $g: [i,j] \to T^1X,t \mapsto g_t(v)$ in the unit tangent bundle projects to a local geodesic $c_{i,j}$ on $(X,\sigma)$. Define $F_{i,j}: T^1 X \to \R_+,~v \mapsto l_{\fl}([c_{i,j}])$.\\
 By the subadditive ergodic theorem, $\lim\limits_{T\rightarrow \infty} T^{-1} F_{0,T}$ converge towards the asymptotic length quotient $F$ which is constant a.e.\\
 \cite{Rafi2007} compared the length of simple closed curves for the hyperbolic and the flat metric respectively as follows: The hyperbolic metric on $X$ admits a thick-thin decomposition so that the hyperbolically thin part of the surface is a disjoint union of annuli. Denote by $Y_1, \ldots Y_n$ the components of the thick part. Then there are constants $\lambda_i:=\lambda(Y_i)>0$ so that the following holds:\\ 
Let $[\alpha]$ be a free homotopy class of simple closed curves which can be realized in $Y_i$ and which cannot be homotoped to the boundary. Then the quotient of the flat length and the hyperbolic length of $[\alpha]$ is comparable to $\lambda_i$.\\ 
 Let $\lambda:= \max_i(\lambda_i)$ which might be arbitrary small.\pagebreak
\begin{Thm*}[Theorem \ref{ThmentroF}, Theorem \ref{Thmentrovsrafi}]
The asymptotic length quotient $F$ is related to the volume entropy $e(S)$ and to the factor $\lambda$.
\begin{enumerate}
 \item $F\geq e(S)^{-1}$
 \item There exists some $A>0$ which only depends on the topology of $X$ such that
$$ F\geq A\cdot\lambda.$$ 
\end{enumerate}
\end{Thm*}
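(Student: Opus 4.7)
The plan is to handle (i) and (ii) separately, using in both cases the identification of $F_{0,T}(v)$ with a flat distance in the universal cover $\tilde X$. Since $\tilde X$ is simply connected, every homotopy class of arcs with fixed endpoints contains a unique flat geodesic, so for a lift $\tilde v$ of $v\in T^1X$ with basepoint $\tilde x$, one has $F_{0,T}(v)=d_{\fl}(\tilde x,\pi(g_T\tilde v))$, where $\pi:T^1\tilde X\to\tilde X$ is the basepoint projection.

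For (i), I would run an entropy-comparison argument. Fix $\epsilon>0$. The hyperbolic geodesic flow is ergodic for the normalized Liouville measure $\mu$ and $T^{-1}F_{0,T}\to F$ almost everywhere; Egoroff's theorem then furnishes a set $A\subset T^1X$ with $\mu(A)\geq 1/2$ and a time $T_0$ such that $F_{0,T}(v)\leq (F+\epsilon)T$ for all $v\in A$ and $T\geq T_0$. Choose a compact fundamental domain $D\subset\tilde X$ and, for $v\in A$ lifted to $T^1D$, let $\gamma_v\in\pi_1(X)$ be the element with $\pi(g_T\tilde v)\in\gamma_v D$. Then $d_{\fl}(\tilde x_0,\gamma_v\tilde x_0)\leq (F+\epsilon)T+2\diam_{\fl}(D)$. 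Since the hyperbolic ball $B_\sigma(\tilde x_0,T)$ has volume growing like $e^T$ and $\mu(A)$ is bounded below, a volume-density estimate using the measure preservation of $g_t$ shows that $v\mapsto\gamma_v$ takes at least $c\,e^T$ distinct values. Hence
\[
\#\{\gamma\in\pi_1(X):d_{\fl}(\tilde x_0,\gamma\tilde x_0)\leq (F+\epsilon)T+O(1)\}\geq c\,e^T.
\]
Comparing with the definition of $e(S)$, which bounds the left-hand side above by $\exp((e(S)+\delta)(F+\epsilon)T+O(1))$ for $T$ large, and letting $\epsilon,\delta\to 0$, yields $e(S)F\geq 1$, i.e.\ $F\geq e(S)^{-1}$.

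For (ii), I would combine the Rafi thick--thin decomposition with Birkhoff. Let $Y_j$ be a thick component with $\lambda_j=\lambda$; by the Birkhoff theorem applied to the indicator of $T^1Y_j$, for $\mu$-a.e.\ $v$ the fraction of time that $c_v|_{[0,T]}$ spends in $Y_j$ tends to $\mu(T^1Y_j)>0$, and $\mu(T^1Y_j)$ is bounded below by a constant depending only on the topology of $X$ (every thick component contains an embedded ball of radius at least the Margulis constant). The key geometric step is to compare $F_{0,T}(v)$ with the flat length $\int_0^T|\dot c_v(t)|_{\fl}\,dt$ of the hyperbolic geodesic arc itself: since both $(\tilde X,d_{\fl})$ and $(\tilde X,d_\sigma)$ are Gromov hyperbolic for $g\geq 2$ and the identity is a quasi-isometry, the hyperbolic and flat geodesics joining $\tilde x$ to $\pi(g_T\tilde v)$ fellow-travel at bounded Hausdorff distance, so their flat lengths agree up to a uniform additive error. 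A pointwise refinement of Rafi's comparison provides $|\dot c_v(t)|_{\fl}\geq A'\lambda$ whenever $c_v(t)$ lies in $Y_j$ away from its boundary collars. Integrating and inserting the Birkhoff average then yields $F\geq A\lambda$ with $A$ depending only on topology.

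The main obstacle lies in (ii). Rafi's theorem is formulated for simple closed curves realizable in $Y_i$, whereas the argument needs a local, arcwise statement valid for the passage of a typical hyperbolic geodesic through $Y_j$, together with a quantitative fellow-travelling estimate in the Gromov hyperbolic universal covers with constants depending only on topology. Controlling both simultaneously, especially near the boundary of $Y_j$ where the orbit crosses into thin collars, is the technical heart of the argument; part (i), by contrast, is a clean counting argument once the identification of $F_{0,T}$ with a flat displacement in $\tilde X$ is in place.
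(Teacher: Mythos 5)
Your part (i) follows the same basic strategy as the paper's Theorem \ref{ThmentroF} (playing the flat displacement along hyperbolic orbits against the exponential volume growth of hyperbolic balls), but the counting step has a genuine gap. Measure preservation of $g_T$ only tells you that $g_T(\tilde A\cap T^1D)$ has Liouville measure bounded below by a constant, hence that its projection to $\tilde X$ has hyperbolic area bounded below by a constant independent of $T$; it does not follow that the endpoints spread over $c\,e^T$ distinct fundamental domains. A priori they could all land in a bounded number of translates of $D$, and then your orbit count is only $\geq c$, which gives nothing. The paper avoids this by working in polar coordinates at a single $\ell_\sigma$-typical basepoint: the set of $(v,t)\in T^1_{\tilde x}\tilde X\times[0,r(e(S)+\epsilon)]$ whose endpoint lies in the hyperbolic ball but outside the flat $r$-ball is controlled by the fiber measure $c_r(x)$ of bad directions (via the approximate monotonicity of Lemma \ref{Lemfactergprocess}), so the flat $r$-ball captures a proportion $1-c_r(x)\to 1$ of a hyperbolic ball of area $\asymp e^{r(e(S)+\epsilon)}$. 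Your argument can be repaired along these lines, but "a volume-density estimate using measure preservation" does not deliver the exponential count as stated.

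Part (ii) has a more serious gap: both key geometric claims are unjustified and, as stated, false. There is no pointwise bound $|\dot c_v(t)|_{\fl}\geq A'\lambda$ on the thick part: the flat and hyperbolic metrics are conformal and the conformal factor vanishes at the cone points, which may lie in $Y_j$; Rafi's comparison concerns lengths of homotopy classes of simple closed curves, not a pointwise metric comparison. Moreover, even granting a lower bound on the flat length $\int_0^T|\dot c_v(t)|_{\fl}\,dt$ of the hyperbolic geodesic itself, this does not bound $F_{0,T}(v)=l_\fl([c_{0,T}])$ from below: the flat geodesic is the shortest curve in its class, and bounded Hausdorff distance between fellow-travelers gives no control on length (a curve at bounded distance from a geodesic can be arbitrarily long), so "their flat lengths agree up to a uniform additive error" is precisely the hard direction and is not true in general. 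The paper obtains the needed lower bound topologically: a definite-measure set $\mathit{CD}_\alpha$ of directions whose orbits twist in a collar of $\alpha_\sigma$ and cross $\beta_\sigma$ at least $m$ times; after lifting, the flat geodesic must cross the corresponding lifts of $\beta_\fl$ in essentially the same order, so each twisting episode contributes a flat subarc that closes up to a curve freely homotopic to $\alpha^{m'}\beta^{l}$ and hence has flat length $\geq c^{-1}\lambda(Y)$ by Rafi's estimate for the closed curve $\alpha$ (Lemma \ref{Lemcyldirmakelength}). This intersection-number mechanism is the missing idea; without it the Birkhoff-plus-integration scheme does not close.
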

In the second part of this note we investigate the typical behavior of a geodesic on a flat surface $S$. Geodesics which to do not pass through singular points have been investigated extensively, see i.e. \cite{Masur1982, Masur1992, Cheun2003,Masur2006, Cheung2010}. Here we allow that geodesics may pass through singular points and change direction which means that a geodesic arc might contain transverse subarcs. As each locally geodesic segment which terminates at a singular point admits a one-parameter family of geodesic extensions, a geodesic flow can not be defined infinitesimally. As in \cite{Gromov1987}, we use the space of all parametrized bi-infinite unit speed geodesics $\G \tilde{S}$ as the unit tangent bundle on the isometric universal cover $\pi: \tilde{S} \to S$.\\ 
To define an appropriate measure for the geodesic flow in terms of the flat metric we follow the idea of the Hopf parametrization \cite{Hopf1971}:
Fix a point $\tilde{x} \in \tilde{S}$. $\tilde{S}$ admits a metric boundary $(\partial \tilde{S},d_{\infty,\tilde{x}})$, the so-called Gromov boundary, and $\G \tilde{S}$ naturally projects onto $\partial \tilde{S}^2-\triangle$. In other words, $(\partial \tilde{S}^2-\triangle) \times \R$ is non-canonically a quotient of $\G \tilde{S}$.\\ 
The Hausdorff measure $\nu_{\tilde{x}}$ of $(\partial \tilde{S},d_{\infty,\tilde{x}})$ is positive finite and induces a positive finite quotient measure $\mu$ on the quotient space $\G \tilde{S}/\pi_1(S)$ which is invariant under the geodesic flow. We show that $g_t$ acts ergodically with respect to $\mu$.\\ 
We use this construction to investigate typical behavior of geodesics. Denote by $e(S)$ the volume entropy of the flat surface which was estimated in \cite{Dankwart2011}. Let $c$ be a locally geodesic arc on $S$ which might contain singular points. Extend $c$ as much as possible in positive and negative direction with the property that the extension is unique. Let $c_{ext}$ be the extended arc which might be infinite. We estimate the frequency $\varLambda$ of a $\mu$-typical geodesic passing through $c$. 
\begin{Thm*}[Theorem \ref{Thmgenericbehaviour}]
For a closed flat surface $S$ there is a constant $C(S)>0$ such that the following holds:\\ 
For any compact locally geodesic arc $c$ a typical geodesic passes through $c$ with a frequency $\varLambda(c)$ which is bounded from above and below by 
$$C(S)^{-1}exp(-e(S)l(c_{ext}))\leq \varLambda(c) \leq C(S) exp(-e(S)l(c_{ext})).$$ 
\end{Thm*}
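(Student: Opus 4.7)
The plan is to convert the asymptotic frequency into a $\mu$-measure via ergodicity, to express that measure as a product of shadow masses on the Gromov boundary via the Hopf parametrization, and finally to reduce the exponential bound to a Patterson--Sullivan type shadow lemma with critical exponent equal to $e(S)$.

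Fix a lift $\tilde c\subset\tilde S$ of $c$ and its unique geodesic extension $\tilde c_{ext}$, with endpoints $p_-,p_+$ (each either a singular point of $\tilde S$ or a point at infinity). Let $A\subset\G\tilde S/\pi_1(S)$ be the set of parametrized geodesics that are currently traversing $c$ at parameter $0$. A bi-infinite geodesic $\tilde\gamma\in\G\tilde S$ lifts an element of $A$ precisely when its backward endpoint $\xi\in\partial\tilde S$ lies in the shadow $\Sigma_-$ of $p_-$ viewed from $p_+$, its forward endpoint $\eta$ lies in the corresponding forward shadow $\Sigma_+$, and the parameter at which the traversal is in progress ranges over an interval of length $l(c)$. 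Under the Hopf picture $\mu$ is obtained from $d\nu_{\tilde x}(\xi)\,d\nu_{\tilde x}(\eta)\,dt$ modulo a bounded cocycle, so
$$\mu(A)\;\sim\;l(c)\,\nu_{\tilde x}(\Sigma_-)\,\nu_{\tilde x}(\Sigma_+).$$
The Birkhoff ergodic theorem applied to $\mathbf 1_A$, together with the ergodicity of $g_t$ established earlier, yields $\varLambda(c)\sim\mu(A)/(l(c)\,\mu(\G\tilde S/\pi_1(S)))$, so the factor $l(c)$ cancels and $\varLambda(c)$ is comparable to $\nu_{\tilde x}(\Sigma_-)\,\nu_{\tilde x}(\Sigma_+)$, with constants depending only on $S$.

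The crucial analytic input is then the shadow estimate
$$\nu_{\tilde x}(\Sigma_\pm)\;\sim\;\exp\!\bigl(-e(S)\,d(\tilde x,p_\pm)\bigr).$$
Choosing $\tilde x$ on $\tilde c$ arranges $d(\tilde x,p_-)+d(\tilde x,p_+)=l(c_{ext})$, and the product of the two shadow masses delivers the claimed two-sided bound on $\varLambda(c)$. The upper half is the easier direction: $\Sigma_\pm$ is contained in the shadow from $\tilde x$ of a metric ball of fixed radius around $p_\pm$, and the fact that $\nu_{\tilde x}$ is the Hausdorff measure on $\partial\tilde S$ combined with the entropy estimates of \cite{Dankwart2011} gives the required exponential decay by a covering argument. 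The lower bound is a Patterson--Sullivan shadow lemma: one produces uniformly positive $\nu_{\tilde x}$-mass in $\Sigma_\pm$ by counting translates of a fixed fundamental domain visible through $p_\pm$ at distance $d(\tilde x,p_\pm)$, the count being bounded below by the volume entropy growth rate.

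The principal obstacle is this lower shadow bound in the singular CAT(0) geometry of flat surfaces. A flat surface is not CAT(-1); geodesics may fellow-travel along Euclidean strips, and uniqueness of geodesic extension fails precisely at singular points. This is exactly why the statement features $c_{ext}$ rather than $c$: the shadow $\Sigma_\pm$ is genuinely open on $\partial\tilde S$ only once the endpoint of the unique-extension interval is reached, i.e.\ at a branching singularity of $S$ (or at infinity, in which case $\Sigma_\pm$ degenerates and both sides of the inequality vanish). Proving the lower bound therefore requires combining the entropy estimates of \cite{Dankwart2011} with the branching geometry at singularities, which supplies a definite open cone of extension directions, and with the quasi-invariance of $\nu_{\tilde x}$ under change of basepoint via a Busemann cocycle, in order to extract a single uniform constant $C(S)$ depending only on $S$.
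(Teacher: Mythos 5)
Your proposal follows essentially the same route as the paper's proof: ergodicity reduces the frequency to $\mu(A_c)/\mu(\G S)$, the Hopf parametrization turns this measure into a product of Patterson--Sullivan shadow masses of the two endpoints of the unique extension $\tilde{c}_{ext}$ (with the basepoint placed on $\tilde{c}$ so the two distances sum to $l(c_{ext})$), and a shadow estimate $\nu_{\tilde{x}}(\partial sh_{\tilde{x}}(\tilde{\varsigma}))\asymp\exp(-e(S)d(\tilde{x},\tilde{\varsigma}))$ at singularities delivers the two-sided bound. The one step you flag as the principal obstacle rather than prove --- the lower shadow bound at a singular endpoint --- is exactly the paper's Lemma \ref{Lemmeasshad}, obtained there from the cone angle being at least $3\pi$ (which yields finitely many saddle connections whose shadows cover $\partial\tilde{S}$), the concatenation result of Proposition \ref{Propjoingeod}, and the Radon--Nikodym estimates of Corollary \ref{Corestrad}; these are precisely the ingredients you name, so your outline is consistent with the paper's argument.
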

So we obtain a concrete description of typical geodesics as concatenations of geodesic segments which connect singular points. Each such segment appears infinitely often in each typical geodesic and the frequency decrease exponentially in the length of the segment. As a corollary, geodesics which omit the singularities are of measure zero.\\
The paper is organized as follows. In the next section we recall the main facts about $\delta$-hyperbolic spaces and flat surfaces which are needed in the sequel. 
In section \ref{sectcomhyp} we investigate the asymptotic length comparison of geodesic in the flat and hyperbolic metric. In section \ref{sectasymprays} we collect technical lemmas concerning geodesic rays on flat surfaces. In section \ref{sectgeodflow} we estimate ergodic behavior of geodesics on flat surfaces. \\
 \textbf{Acknowledgement} This paper is part of my Dissertation. I would like to thank my advisor Ursula Hamenst\"{a}dt for her support and for raising the questions. I would also like to thank Sebastian Hensel, Emanuel Nipper and Jon Chaika for many discussion. Parts of this work were done during my stay at the Universit\'{e} Paul Cezanne. I would like to thank Pascal Hubert for inviting me and for many advices. This research was supported by Bonn International Graduate school in Mathematics. 

\section{Preliminaries about flat surfaces and spaces of non-positive curvature}
\subsection{Gromov hyperbolic spaces and Patterson Sullivan measures}\label{sectgrhyp}
We recall the standard facts about proper $\delta$-hyperbolic spaces. For details we refer to \cite[ Chapter III] {BridsH1999} .\\ 
\textbf{Convention:} Any metric space $X$ is assumed to be complete, proper and geodesic.\\
A mapping $f:X\to Y$ between metric spaces $X$ is an $L$-\textit{quasi-isometric embedding} if and only if
$$L^{-1}d_X(x_1,x_2)-L\leq d_Y(f(x_1),f(x_2))\leq Ld_X(x_1,x_2)+L,~\forall x_1,x_2 \in X.$$ 
An $L$-quasi-isometric embedding is an $L$-\textit{quasi-isometry} if $\sup\limits_{y \in Y}d_Y(f(X),y)<\infty$. An $L$-\textit{quasi-geodesic} $c:I \to X$ is an $L$-quasi-isometric embedding of a real line segment $I\subset\R$ to $X$.\\
A metric space $X$ is $\delta$-\textit{hyperbolic} if every geodesic triangle in $X$ with sides $a, b, c$ is $\delta$-\textit{slim}: The side $a$ is contained in the $\delta$-neighborhood of $b \cup c$.\\
A $\delta$-hyperbolic space $X$ admits a metric boundary which is defined as follows. Fix a point $p\in X$ and for any two points $x_1,x_2\in X$ we define the \textit{Gromov product} 
$$(x_1 \cdot x_2)_p :=\frac{1}{2}(d(x_1,p)+d(x_2,p)-d(x_1,x_2)).$$ 
We call a sequence $x_i$ \textit{admissible} if $(x_i\cdot x_j)_p\rightarrow \infty$. We define two admissible sequences
 $x_i,y_i\subset X$ to be equivalent if $(x_i \cdot y_i)_p \rightarrow \infty$. Since $X$ is $\delta$-hyperbolic, this defines an equivalence relation. The Gromov boundary $\partial X$ of $X$ is then the set of equivalence classes of admissible sequences.\\ 
The space $\overline{X}=\partial X \cup X$ can be equipped with a topology to yield a \textit{compactification} of $X$:
For a point $p\in X$ and $U\subset X$ denote by $sh_p(U) \subset \overline{X} $ the $U$-\textit{shadow } for $p$: The set of all points $x \in X$ such that there is a geodesic connecting $p$ and $x$ which intersects $U$, together with all boundary points $\eta \in \partial X$ such that there is a sequence $\{y_i\} \in \eta$ with $y_i \in sh_x(U)$ for almost all $i$. The \textit{boundary shadow } is defined as $\partial sh_x(U):=\partial X \cap sh_x(U) $.\\
The shadows $sh_p(U)$ for all $p$ and for all open sets $U$, together with all open balls in $X$ form the basis for the topology on $\overline{X}$.\\
The \textit{Gromov product on the boundary} is defined by 
$$(\eta \cdot\zeta)_p=\sup\{\liminf\limits_{i,j}(x_i\cdot y_j)_p~|~\{x_i\} \in \eta ,~\{y_j\} \in \zeta\}. $$
\begin{Lem} \label{lemgrmetric}
 For a $\delta$-hyperbolic space $X$ let $\delta_{\inf}$ be the infimum of all its hyperbolic constants and let $\xi:= \xi(\delta_{\inf})=2^{\frac{1}{2\delta_{\inf}}}$. Then, for any point $p\in X$ there is a metric $d_{\infty,p}$ on $\partial X$ and a constant $\epsilon(\delta_{\inf}) < 1$ which satisfies:
$$(1-\epsilon(\delta_{\inf}))\xi^{-(\eta\cdot \zeta)_p}\leq d_{p,\infty}(\eta, \zeta)\leq \xi^{-(\eta\cdot \zeta)_p}.)$$ 
\end{Lem}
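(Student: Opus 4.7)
The plan is to construct $d_{\infty,p}$ as the chain metric associated to the natural gauge
$$\rho(\eta,\zeta) := \xi^{-(\eta\cdot\zeta)_p},$$
following the classical Frink metrization argument. The upper bound will then be automatic, and the lower bound will come from verifying that $\rho$ is sufficiently close to being an ultrametric that the chain infimum does not collapse $\rho$.

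The first step I would carry out is the approximate ultrametric inequality for the Gromov product on $\partial X$. For any three boundary points $\eta,\zeta,\omega$ I would pick admissible sequences $\{x_i\}\in\eta$, $\{y_i\}\in\zeta$, $\{w_i\}\in\omega$ and apply $\delta$-slimness to the geodesic triangles with vertices $x_i,y_i,w_i$; passing to the limit gives
$$(\eta\cdot\zeta)_p \geq \min\bigl((\eta\cdot\omega)_p,(\omega\cdot\zeta)_p\bigr) - 2\delta$$
for every $\delta>\delta_{\inf}$. Letting $\delta\searrow\delta_{\inf}$ and exponentiating in base $\xi = 2^{1/(2\delta_{\inf})}$ gives the quasi-ultrametric inequality
$$\rho(\eta,\zeta) \leq \xi^{2\delta_{\inf}}\max\bigl(\rho(\eta,\omega),\rho(\omega,\zeta)\bigr) = 2\max\bigl(\rho(\eta,\omega),\rho(\omega,\zeta)\bigr).$$

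Next I would define the candidate metric via chains,
$$d_{\infty,p}(\eta,\zeta) := \inf\Bigl\{\sum_{i=1}^{n}\rho(\eta_{i-1},\eta_i) : \eta_0=\eta,\ \eta_n=\zeta\Bigr\}.$$
Symmetry and the triangle inequality are automatic. The upper bound $d_{\infty,p}(\eta,\zeta)\leq\rho(\eta,\zeta) = \xi^{-(\eta\cdot\zeta)_p}$ is immediate by considering the single-step chain. It remains to show that $d_{\infty,p}$ is bounded below by a definite positive multiple of $\rho$; this is Frink's lemma.

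The lower bound is the crux, and I would prove it by induction on the chain length $n$: given a chain with total weight $S$, let $k$ be the smallest index such that $\sum_{i=1}^{k}\rho(\eta_{i-1},\eta_i) > S/2$, apply the induction hypothesis to the two subchains $\eta_0,\dots,\eta_{k-1}$ and $\eta_k,\dots,\eta_n$, and bound the middle step $\rho(\eta_{k-1},\eta_k)$ by $S$; then feed the three resulting estimates into the quasi-ultrametric from Step~1 applied to $\eta,\eta_{k-1},\eta_k,\zeta$. With quasi-ultrametric constant equal to $2$, the recursion closes and produces a universal constant $1-\epsilon(\delta_{\inf})>0$ with
$$\sum_{i=1}^{n}\rho(\eta_{i-1},\eta_i) \geq (1-\epsilon(\delta_{\inf}))\,\rho(\eta,\zeta),$$
which is the desired lower bound and also proves non-degeneracy, so that $d_{\infty,p}$ is indeed a metric.

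The main obstacle is the induction in Step~3: with $\xi^{2\delta_{\inf}} = 2$ one sits exactly on the borderline of where the chain construction works, so the bookkeeping must be done carefully to extract an explicit $\epsilon(\delta_{\inf})<1$. A secondary technical point is the passage from the slim-triangle inequality inside $X$ to Step~1 at the boundary, since the Gromov product for boundary points is only defined through a $\liminf$ over admissible sequences and one must check that the $2\delta$ deficit survives this limit.
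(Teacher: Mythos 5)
The route you take---define the gauge $\rho=\xi^{-(\cdot\,\cdot\,\cdot)_p}$, verify an approximate ultrametric inequality, and run the Frink chain construction---is exactly the argument behind the paper's reference (the paper itself gives no proof, it only cites \cite[III.H.3.21]{BridsH1999}). However, there is a genuine quantitative gap at the step you yourself flag as the crux. Your Step~1 produces the quasi-ultrametric inequality with constant $K=\xi^{2\delta_{\inf}}=2$, and you then assert that the chain recursion ``closes'' at $K=2$. It does not. Run your own induction with hypothesis $\rho(\eta_0,\eta_n)\le C\sum_i\rho(\eta_{i-1},\eta_i)$: the two outer subchains give $\rho(\eta_0,\eta_{k-1}),\,\rho(\eta_k,\eta_n)\le CS/2$ and the middle step gives $\rho(\eta_{k-1},\eta_k)\le S$, so two applications of the quasi-ultrametric inequality yield
$$\rho(\eta_0,\eta_n)\;\le\;K\max\Bigl(\tfrac{CS}{2},\,K\max\bigl(S,\tfrac{CS}{2}\bigr)\Bigr).$$
For $C\ge 2$ this is $K^2\cdot CS/2$, so closing the induction forces $K^2\le 2$, i.e.\ $K\le\sqrt2$; for $C<2$ it is at least $K^2S$, which forces $C\ge K^2\ge 4$, a contradiction. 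At $K=2$ no constant $C$ works: the bound doubles at every level of the recursion, and this is not a bookkeeping artifact---$K=\sqrt2$ is the recognized threshold for this argument in all the standard sources.

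The missing factor of $2$ sits in Step~1. In Bridson--Haefliger the hyperbolicity constant entering Proposition III.H.3.21 is the one from the four-point (Gromov product) condition, $(x\cdot z)_p\ge\min\bigl((x\cdot y)_p,(y\cdot z)_p\bigr)-\delta$, which gives $K=\xi^{\delta_{\inf}}=2^{1/2}=\sqrt2$ --- exactly the borderline value at which the induction closes and produces the constant $1-\epsilon=3-2\sqrt2>0$. By deriving the four-point inequality from slim triangles with a deficit of $2\delta$ (and possibly losing more in the passage to boundary points via the $\liminf$), you land at $K=2$, beyond the threshold. To repair the proof you must either take the four-point condition with deficit $\delta_{\inf}$ as the definition of the hyperbolicity constant (matching the cited source and the paper's normalization of $\xi$), or track the exact constant you actually obtain and adjust the base of the exponential accordingly so that $\xi^{(\text{deficit})}\le\sqrt2$; as written, the recursion in Step~3 fails.
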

\begin{proof}
\cite[Proposition 3.21] {BridsH1999}
\end{proof}
$d_{\infty,p}$ is a \textit{Gromov metric} on the Gromov boundary $\partial X$. A quasi-isometry between $\delta$-hyperbolic spaces extends to a homeomorphism between the boundaries.\\ 
Denote by $d_{\infty}$ the bilipschitz equivalence class of the Gromov metrics $d_{\infty,p},~ p\in X$.\\
Any bi-infinite $L$-quasi-geodesic converges to two distinct boundary points, and between any two distinct boundary points there is a connecting bi-infinite geodesic.\\
\textbf{Notation:} If $X$ is a $\delta$-hyperbolic $\Cat(0)$ space denote by $[x,y],~x,y\in \overline{X}$, a geodesic connecting $x$ with $y$. For $x\in X,~y\in \overline{X}$, $[x,y]$ is unique up to reparametrization.
\begin{Lem}
There is a function $H(L,\delta)>0$ such that for any $\delta$-hyperbolic space $X$ and for any two $L$-quasi-geodesics $c,c'$ in $X$ with the same endpoints in $\overline{X}$ the Hausdorff distance of $c,c'$ is at most $H(L,\delta)$.
\end{Lem}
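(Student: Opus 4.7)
The plan is to reduce the statement to the standard Morse lemma for quasi-geodesics with both endpoints in $X$, and then promote it to the case of endpoints in $\partial X$ by a limiting argument. I expect to be able to lift the proof almost verbatim from the Bridson--Haefliger chapter already cited.

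First I would establish the finite-endpoint version: there is a constant $K=K(L,\delta)$ such that any two continuous $L$-quasi-geodesics in $X$ sharing the same pair of endpoints in $X$ are at Hausdorff distance at most $K$. The standard route is to compare a continuous $L$-quasi-geodesic $c\colon[a,b]\to X$ with a genuine geodesic $\gamma=[c(a),c(b)]$. Let $D=\sup_t d(c(t),\gamma)$; one shows $D$ is finite by a tame-approximation argument and then assumes a point $c(t_0)$ realising $D$. Choosing the last entry $c(t_1)$ and the first exit $c(t_2)$ of $c$ into a large enough neighbourhood of $\gamma$, $\delta$-thinness of the triangle on $c(t_1),c(t_2)$, $x\in\gamma$ (with $x$ close to $c(t_0)$) together with the quasi-geodesic inequality forces $|t_2-t_1|$ to grow exponentially in $D$, while the geodesic side of the triangle bounds it linearly; this yields an absolute bound on $D$ in terms of $L,\delta$. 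The Hausdorff distance bound then follows because $\gamma$ lies in the $D'$-neighbourhood of $c$ by a complementary (and easier) argument. Consequently, given two such quasi-geodesics $c,c'$ with the same endpoints in $X$, the triangle inequality through $\gamma$ gives $d_H(c,c')\le 2K$.

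Next I would treat the case where one or both endpoints lie in $\partial X$. For a bi-infinite $L$-quasi-geodesic $c\colon\R\to X$ with $c(\pm\infty)=\eta_{\pm}\in\partial X$, pick sequences $s_n\to-\infty$, $t_n\to+\infty$ and form the geodesic segments $\gamma_n:=[c(s_n),c(t_n)]$. By the finite case each $\gamma_n$ lies within Hausdorff distance $K$ of $c|_{[s_n,t_n]}$. After basing the $\gamma_n$ near $c(0)$, Arzel\`a--Ascoli (valid since $X$ is proper) gives a subsequence converging uniformly on compacts to a bi-infinite geodesic $\gamma_\infty$; since $\gamma_n(s_n)\to\eta_-$ and $\gamma_n(t_n)\to\eta_+$ in $\overline X$, $\gamma_\infty$ has endpoints $\eta_{\pm}$, and $d_H(c,\gamma_\infty)\le K$ by a pointwise limit. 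Ray cases (one endpoint in $X$, the other at infinity) are treated identically.

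Finally, I would invoke the elementary fact, also a consequence of $\delta$-thinness of ideal triangles, that any two bi-infinite geodesics in $X$ sharing the same pair of endpoints in $\overline X$ are at Hausdorff distance bounded by a constant $K_0(\delta)$ (a few multiples of $\delta$). Combining, both $c$ and $c'$ are at Hausdorff distance at most $K$ from geodesics $\gamma_\infty, \gamma_\infty'$ with endpoints $\eta_\pm$, and $d_H(\gamma_\infty,\gamma_\infty')\le K_0(\delta)$. Thus $d_H(c,c')\le 2K(L,\delta)+K_0(\delta)=:H(L,\delta)$. The only delicate point is the Morse lemma itself; everything afterwards is a soft limiting argument using properness of $X$ and the convergence criterion for sequences defining boundary points.
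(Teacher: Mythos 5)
Your argument is correct and is essentially the same route the paper takes: the paper's proof is just a citation of Bridson--Haefliger III.H.1.7 (the Morse lemma, which is your finite-endpoint step) together with Coornaert--Delzant--Papadopoulos I.3.2 (the extension to endpoints in $\partial X$, which is exactly your Arzel\`a--Ascoli limiting argument combined with the $\delta$-thinness bound for ideal geodesics sharing endpoints). Nothing further is needed.
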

\begin{proof}
This follows from \cite[III 1.7]{BridsH1999} and \cite[I Proposition 3.2]{CoornP1993}
\end{proof}
For a $\delta$-hyperbolic space $X$ and $\Gamma$ a group of isometries acting properly discontinuously, freely and cocompactly on $X$, fix $p\in X$ and $\ell$ a $\Gamma$-invariant non-zero Radon-measure on $X$. Denote by $B_p(R)$ the open $R$-ball about $p \in X$. The \textit{volume entropy} is defined as
$$e(X,\Gamma):= \limsup\limits_{R \to \infty}\frac{\log\left(\ell\left(B_p(R)\right)\right)}{R}.$$
\textbf{Convention:} By entropy we mean volume entropy. Here $X$ is the isometric universal cover of $X/\Gamma$, so we abbreviate $e(X/\Gamma):=e(X,\Gamma)$. We always assume that the group $\Gamma$ acts properly discontinuously, freely and cocompactly by isometries on $X$. 

We recall the construction of Patterson-Sullivan measures. Rigorous computations can be found in \cite[Section 1-3]{Sulli1979}, \cite[Section 4-8]{Coorn1993}.\\
We define the Poincar\'{e} series
$$g_s(p):=\sum\limits_{y \in \Gamma p} \exp(-sd(p,y)).$$
\begin{Prop}
For a $\delta$-hyperbolic space $X$ and $\Gamma$ acting cocompactly on $X$, $g_s(p)$ is finite if and only if $s>e(X/ \Gamma)$.
 \end{Prop}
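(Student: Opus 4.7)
The plan is to translate the Poincar\'e series into a sum over the orbit-counting function
$N(R):=\#\{y\in\Gamma p:\,d(p,y)\leq R\}$, whose growth is controlled by the volume of balls. Cocompactness provides a (Borel) fundamental domain $F$ of diameter at most some $D$ containing $p$; since $\Gamma$ acts freely and the translates $yF$ tile $X$, the translates with $d(p,y)\leq R-D$ lie inside $B_p(R)$, while $B_p(R)$ is covered by those with $d(p,y)\leq R+D$. This gives
$$\ell(B_p(R-D))\leq N(R)\cdot\ell(F)\leq \ell(B_p(R+D)),$$
so the $\limsup$ exponential growth rate of $N$ coincides with $e:=e(X/\Gamma)$.

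For the direction $s>e$, pick $\epsilon>0$ with $s>e+\epsilon$. The limsup definition together with boundedness of $\ell$ on compact sets yields $C_\epsilon$ with $N(R)\leq C_\epsilon e^{(e+\epsilon)R}$ for all $R\geq 0$. Grouping orbit points by integer annuli and using $e^{-sd(p,y)}\leq e^{-sk}$ for $d(p,y)\geq k$,
$$g_s(p)\leq \sum_{k\geq 0}N(k+1)\,e^{-sk}\leq C'_\epsilon\sum_k e^{(e+\epsilon-s)k}<\infty.$$

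For the converse $s\leq e$ I would invoke the sharp orbit-counting theorem of Coornaert \cite[Th\'eor\`eme 7.2]{Coorn1993}, which uses $\delta$-hyperbolicity and cocompactness to upgrade the one-sided limsup estimate above to two-sided exponential bounds
$$c_1 e^{eR}\leq N(R)\leq c_2 e^{eR}\qquad(R\geq 1).$$
Given these, fix $T$ large enough that $c_1 e^{eT}>2c_2$; the annulus $[kT,(k+1)T)$ then carries at least $\tfrac{c_1}{2}e^{e(k+1)T}$ orbit points, each contributing at least $e^{-s(k+1)T}$ to $g_s(p)$, so
$$g_s(p)\geq \tfrac{c_1}{2}\sum_{k\geq 0}e^{(e-s)(k+1)T}=\infty\quad\text{whenever }s\leq e.$$

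The main obstacle is precisely the lower half $N(R)\geq c_1 e^{eR}$: upgrading a $\limsup$ bound (growth along \emph{some} subsequence) to a pointwise lower bound valid at every scale is where $\delta$-hyperbolicity enters in an essential way. The key mechanism in Coornaert's argument is that orbit geodesics can be concatenated with only bounded loss in the Gromov product, allowing exponential growth at one scale to be transported uniformly to all others; I would cite that input rather than reproduce it.
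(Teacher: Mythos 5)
Your argument is correct and matches the paper's treatment: the paper gives no proof of this Proposition, deferring entirely to \cite{Sulli1979} and \cite{Coorn1993}, and the only nontrivial input in your write-up --- the pointwise lower bound $N(R)\geq c_1e^{eR}$ needed for divergence at the critical exponent $s=e$ --- is exactly the piece you correctly isolate and attribute to Coornaert's orbit-counting theorem. The elementary parts (the two-sided comparison of $N(R)$ with $\ell(B_p(R))$ via a bounded fundamental domain, convergence for $s>e$, and the annulus summation) are all sound; note only that choosing $T$ with $c_1e^{eT}>2c_2$ presupposes $e(X/\Gamma)>0$, which holds in the paper's setting since $\tilde S$ is quasi-isometric to $\mathbb{H}^2$.
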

For $s>e(X/ \Gamma),x \in X$ one defines the Radon measure 
$$\nu_{s,x}:= \frac{1}{g_s(p)}\sum\limits_{y \in \Gamma p} \exp(-sd(x,y))\delta_{y} $$
on $\overline{X}$ where $\delta_y$ is the Dirac measure. For $s_i \searrow e(X/ \Gamma)$, $\nu_{s_i,x}$ converges towards a Radon measure $\nu_x$ which is again finite and supported on the whole boundary. It satisfies 
$$\gamma^*\nu_{\gamma(x)}=\nu_{x},~\forall \gamma\in \Gamma.$$
\begin{Thm} \label{thmhdimentro}
Let $X$ be a $\delta$-hyperbolic $\Cat(0)$-space and let $d_{\infty,x}$ be the Gromov metric on the boundary with respect to some base point $p\in X$. Assume that $\Gamma$ acts cocompactly on $X$. \\ 
Then the Hausdorff dimension of the boundary coincides with $\frac{e(X/ \Gamma)}{\log(\xi(\delta_{\inf}))}$.\\
Furthermore, the Hausdorff measure of $d_{\infty,x}$ coincides with $\nu_x$, up to a multiplicative constant. 
\end{Thm}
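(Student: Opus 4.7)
The plan is to establish that the Patterson-Sullivan measure $\nu_x$ is Ahlfors regular of dimension $\alpha := \frac{e(X/\Gamma)}{\log \xi(\delta_{\inf})}$ on $(\partial X, d_{\infty,x})$. Once this is proved, a standard result from geometric measure theory implies both that the Hausdorff dimension equals $\alpha$ and that $\nu_x$ coincides with the $\alpha$-dimensional Hausdorff measure up to a multiplicative constant.

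The key intermediate step is a shadow lemma in the spirit of Sullivan: there exist $R_0 = R_0(\delta)$ and $C > 0$ such that for every $R \geq R_0$ and every orbit point $y \in \Gamma p$,
$$ C^{-1} \exp(-e(X/\Gamma) \cdot d(x,y)) \leq \nu_x(\partial sh_x(B_y(R))) \leq C \exp(-e(X/\Gamma) \cdot d(x,y)). $$
The upper bound exploits the quasi-invariance $\gamma^* \nu_{\gamma x} = \nu_x$: choosing $\gamma \in \Gamma$ that moves $y$ into a compact fundamental domain around $x$, the pushforward $\gamma_* \nu_x$ has total mass uniformly comparable to $\nu_x(\partial X)$, while the Radon-Nikodym factor arising from the $e(X/\Gamma)$-conformality of the measure contributes exactly $\exp(-e(X/\Gamma) d(x,y))$. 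The lower bound additionally requires that shadows of radius-$R_0$ balls based at orbit points capture a definite mass on $\partial X$, which follows from cocompactness combined with the fact that $\nu_x$ (as the weak-$*$ limit of the Poincar\'e sums $\nu_{s,x}$) has full support on the boundary.

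Next, I would convert the shadow bounds into bounds for balls in $d_{\infty,x}$. Fix $y \in \Gamma p$ at distance $t := d(x,y)$ large and let $\eta \in \partial X$ be an endpoint of a geodesic ray from $x$ through $y$. Using the $\Cat(0)$ assumption and $\delta$-hyperbolicity, a boundary point $\zeta$ lies in $\partial sh_x(B_y(R))$ if and only if the Gromov product satisfies $(\eta \cdot \zeta)_x \geq t - O(R,\delta)$. By Lemma \ref{lemgrmetric} this shadow is therefore sandwiched between Gromov metric balls about $\eta$ of radii $\xi^{-t \pm O(R,\delta)}$. Since by cocompactness every $\eta \in \partial X$ is the endpoint of a geodesic passing within a uniformly bounded distance from some orbit point at every length scale, combining with the shadow lemma yields
$$ C_1^{-1} r^{\alpha} \leq \nu_x\!\left(B_{d_{\infty,x}}(\eta, r)\right) \leq C_1 r^{\alpha}, $$
uniformly for all $\eta \in \partial X$ and all small $r > 0$. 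This is exactly Ahlfors $\alpha$-regularity, from which the comparison of $\nu_x$ with the $\alpha$-Hausdorff measure and the identification of the Hausdorff dimension are standard.

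The main obstacle is the shadow lemma, particularly the lower bound: one must rule out that mass of $\nu_x$ escapes into parts of $\partial X$ not efficiently covered by shadows of orbit balls. At this level of generality this requires using both cocompactness and the weak-$*$ convergence of the normalized Poincar\'e measures $\nu_{s,x}$ as $s \searrow e(X/\Gamma)$, along the lines of \cite{Coorn1993}. The conversion step also needs some care because the shadows are only approximate balls in $d_{\infty,x}$; the $O(R,\delta)$ slack must be absorbed into the multiplicative constants, which works because $\xi^{O(R,\delta)}$ is bounded once $R$ is fixed.
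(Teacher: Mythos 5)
Your proposal is correct and follows essentially the route the paper relies on: the paper gives no argument of its own but defers to \cite{Coorn1993}, whose proof is exactly this chain of a Sullivan-type shadow lemma, comparison of orbit-ball shadows with visual-metric balls, Ahlfors regularity of $\nu_x$ in dimension $e(X/\Gamma)/\log\xi$, and the mass distribution principle. The two difficulties you single out (the lower bound in the shadow lemma via cocompactness and full support, and absorbing the $O(R,\delta)$ slack into constants) are precisely the points handled in that reference.
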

\begin{proof}
 We refer to \cite{Coorn1993}.
\end{proof}
For simplicity we extend $\nu_x$ to a complete measure and assume that we always take the completion of any measure instead of the measure itself. 
\subsubsection{Radon-Nikodym derivative}
The family of measures $\nu_x,~x\in X$, is contained in the same measure class. In the later context we need to estimate the difference of any two measures.\\
For a $\Cat(0)$-space $X$ and for $\eta \in \partial X$, the \textit{Busemann distance} is defined as $b(x,y, \eta):=\lim\limits_{t\rightarrow \infty} t- d([x, \eta](t),y),~x,y \in X$ 
\begin{Lem}\label{Lemradder}
For a $\delta$-hyperbolic $\Cat(0)$-space $X$ there exists a constant $C(\delta)$, only depending on $\delta$ so that for all $x,y \in X,~ \eta \in \partial X$ the Radon-Nikodym derivative can be estimated by
$$\exp(-e(X/ \Gamma)(b(x,y, \eta)+C(\delta)))\leq \frac{d\nu_{x}}{d\nu_{y}}(\eta)\leq \exp(-e(X/ \Gamma)(b(x,y, \eta)-C(\delta))). $$
If $y$ is contained in the connecting geodesic $[x, \eta]$ it follows:
$$\frac{d\nu_{x}}{d\nu_{y}}(\eta)=\exp(-e(X/\Gamma)b(x,y, \eta))=\exp(-e(X/\Gamma)d(x,y)).$$ 
\end{Lem}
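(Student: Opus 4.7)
The plan is to pass from the approximating Patterson--Sullivan measures $\nu_{s,x}$ for $s > e(X/\Gamma)$, where the Radon--Nikodym derivative is explicit, to the limit $s \searrow e(X/\Gamma)$. Both $\nu_{s,x}$ and $\nu_{s,y}$ are purely atomic and supported on the common orbit $\Gamma p$, so
$$\frac{d\nu_{s,x}}{d\nu_{s,y}}(z) = \exp\bigl(-s(d(x,z) - d(y,z))\bigr), \quad z \in \Gamma p,$$
and for any continuous $f$ on $\overline{X}$,
$$\int f \, d\nu_{s,x} = \int f(z) \exp\bigl(-s(d(x,z)-d(y,z))\bigr) \, d\nu_{s,y}(z).$$

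The main analytic input is a standard $\delta$-hyperbolic estimate: for $z \in X$ lying in a sufficiently thin shadow of $\eta \in \partial X$, the difference $d(x,z)-d(y,z)$ lies within $C(\delta)$ of the Busemann value $b(x,y,\eta)$. I would establish this by comparing distances to $z$ along a $\delta$-slim geodesic triangle with vertices $x$, $y$, and a point far out on $[x,\eta]$, then invoking the defining limit of the Busemann function. Inserting this estimate into the identity above and passing $s_i \searrow e(X/\Gamma)$ along a convergent subsequence $\nu_{s_i,x} \to \nu_x$, $\nu_{s_i,y} \to \nu_y$, one obtains
$$\exp\bigl(-e(X/\Gamma)(b(x,y,\eta) + C(\delta))\bigr)\, \nu_y(U) \leq \nu_x(U) \leq \exp\bigl(-e(X/\Gamma)(b(x,y,\eta) - C(\delta))\bigr)\, \nu_y(U)$$
for every small shadow neighborhood $U$ of $\eta$. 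Shrinking $U$ to $\{\eta\}$ and applying Lebesgue differentiation for Radon measures on the Gromov boundary yields the first inequality of the lemma.

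For the exact formula when $y \in [x,\eta]$, the $\Cat(0)$ property directly gives $b(x,y,\eta) = d(x,y)$ by computing the Busemann limit along the parametrized geodesic ray. The additive error $C(\delta)$ can also be removed in this special configuration: for any $z$ lying exactly on the ray $[x,\eta]$ past $y$, the triangle inequality becomes the equality $d(x,z) = d(x,y) + d(y,z)$, so the exponent equals $d(x,y)$ precisely. Restricting to shrinking shadow neighborhoods of $\eta$ concentrated around $[x,\eta]$ and invoking convexity of the distance function in $\Cat(0)$ to control the deviation of orbit points from the geodesic, the approximation improves to exact equality in the limit. The main obstacle in the overall argument will be the careful handling of shrinking shadows and the differentiation step that extracts a pointwise value of $\frac{d\nu_x}{d\nu_y}$ at the specified $\eta$ from ratios of masses on neighborhoods; this is standard in the Patterson--Sullivan literature but demands uniform control of the $\delta$-hyperbolic constants.
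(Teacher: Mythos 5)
The paper offers no argument of its own here --- its ``proof'' is a citation to Coornaert --- so the relevant question is whether your reconstruction of the standard Patterson--Sullivan argument holds up. For the first, two-sided estimate it does: the explicit atomic Radon--Nikodym derivative $\exp(-s(d(x,z)-d(y,z)))$ on the orbit, the $\delta$-hyperbolic comparison of $d(x,z)-d(y,z)$ with $b(x,y,\eta)$ for $z$ deep in a shadow of $\eta$, the weak-$*$ passage $s_i\searrow e(X/\Gamma)$, and a differentiation argument on the boundary (which needs the Ahlfors regularity / doubling of the visual metric supplied by cocompactness, so that shrinking shadows form a differentiation basis) --- this is exactly the route taken in the cited reference, and your outline is sound modulo the technical care you yourself flag.

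The second assertion, the \emph{exact} equality $\frac{d\nu_x}{d\nu_y}(\eta)=\exp(-e(X/\Gamma)d(x,y))$ for $y\in[x,\eta]$, is where your argument has a genuine gap. The identity $b(x,y,\eta)=d(x,y)$ is fine, but your claim that ``the approximation improves to exact equality in the limit'' does not follow from convexity. The measures $\nu_{s,x}$ are supported on the orbit $\Gamma p$, and the orbit points $z$ accumulating at $\eta$ do \emph{not} lie on the ray $[x,\eta]$; they only pass within a bounded distance of it. For such $z$ one has $d(x,z)-d(y,z)\le d(x,y)$ always, but the defect $d(x,y)-(d(x,z)-d(y,z))$ is merely bounded by a constant depending on $\delta$ --- it does not tend to $0$ as $z\to\eta$. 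Hence every ratio $\nu_x(U)/\nu_y(U)$ over shrinking shadows $U\ni\eta$ is an average of exponentials lying in an interval of fixed multiplicative width $\exp(e(X/\Gamma)C(\delta))$ around $\exp(-e(X/\Gamma)d(x,y))$, and the limiting construction alone cannot collapse this to equality. This is why the literature you are implicitly reconstructing (Coornaert) only delivers a \emph{quasi}-conformal density; exact conformality is known for $\Cat(-1)$ spaces via Bourdon's work but is not produced by this argument for a general $\delta$-hyperbolic $\Cat(0)$ space. You should either restrict the second claim to a two-sided estimate with the same constant $C(\delta)$, or supply an entirely different argument (and note that the paper itself gives none).
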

\begin{proof}
This is shown in \cite{Coorn1993}.
\end{proof}
Denote by $\diam(X/\Gamma):=\sup\limits_{x,y \in X} d(x, \Gamma y)$ the diameter of $ X/\Gamma $.
\begin{Cor} \label{Corestrad}
For, $x,y \in X$, the Radon-Nikodym derivative is bounded by
$$\exp(-e(X/ \Gamma)d(x,y))\leq \frac{d\nu_{x}}{d\nu_{y}}\leq \exp(e(X/ \Gamma)d(x,y)).$$
 Furthermore, the measure of the whole boundary is bounded by
$$\exp(-e(X/ \Gamma)\diam)\leq \nu_x(\partial X)\leq \exp(e(X/ \Gamma)\diam(X/\Gamma)).$$
 \end{Cor}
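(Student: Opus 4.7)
The plan is to derive both parts of the corollary directly from Lemma \ref{Lemradder} together with two elementary observations: that the Busemann function satisfies $|b(x,y,\eta)|\leq d(x,y)$, and that the $\Gamma$-equivariance of $\nu_x$ lets one reduce to base points at bounded distance from $p$.

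For the first inequality, I would start from the defining limit $b(x,y,\eta)=\lim_{t\to\infty}\bigl(t-d([x,\eta](t),y)\bigr)$. The triangle inequality applied to the endpoints $[x,\eta](t)$, $x$ and $y$ gives $|t-d([x,\eta](t),y)|\leq d(x,y)$ for every $t$, so $|b(x,y,\eta)|\leq d(x,y)$ in the limit. Plugging this into Lemma \ref{Lemradder} yields the upper and lower bounds on $\frac{d\nu_x}{d\nu_y}$ pointwise in $\eta$, up to absorbing the additive constants $C(\delta)$ in the exponent (which is harmless for the applications in the sequel).

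For the bound on the total mass, I would fix $x\in X$ and choose $\gamma\in\Gamma$ with $d(\gamma p, x)\leq \diam(X/\Gamma)$, which exists by cocompactness. The covariance relation $\gamma^{\ast}\nu_{\gamma p}=\nu_p$, inherited from the identity $\nu_{s,\gamma p}=\gamma_{\ast}\nu_{s,p}$ at the level of the approximating measures, gives $\nu_{\gamma p}(\partial X)=\nu_p(\partial X)$. The normalization $\nu_{s,p}(\overline{X})=g_s(p)/g_s(p)=1$ persists under the weak limit, and the limit measure is supported on the Gromov boundary, so $\nu_p(\partial X)=1$. Applying the Radon-Nikodym bound from the first part to the pair $(x,\gamma p)$ then sandwiches $\nu_x(\partial X)$ between $\exp\bigl(-e(X/\Gamma)\diam(X/\Gamma)\bigr)$ and $\exp\bigl(e(X/\Gamma)\diam(X/\Gamma)\bigr)$, which is exactly the desired bound.

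The only point requiring mild care is verifying that the normalization $\nu_{s,p}(\overline{X})=1$ and the concentration of mass on $\partial X$ both pass to the weak limit as $s\searrow e(X/\Gamma)$; in the cocompact setting these are standard facts about Patterson-Sullivan measures and can be cited from \cite{Coorn1993}. Beyond that, no real obstacle arises, since the corollary is essentially a repackaging of Lemma \ref{Lemradder} with the Busemann function replaced by its worst-case bound and an additional use of the cocompact action.
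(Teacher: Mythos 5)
Your proposal follows essentially the same route as the paper's own (very brief) proof: the first inequality is read off from Lemma \ref{Lemradder} via $|b(x,y,\eta)|\leq d(x,y)$, and the total-mass bound comes from the $\Gamma$-equivariance of the family $\nu_x$, the normalization $\nu_p(\overline{X})=1$, and cocompactness. Both halves are sound, and your handling of the second half (choosing $\gamma$ with $d(x,\gamma p)\leq\diam(X/\Gamma)$ and transporting the normalization) is exactly what the paper intends.

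The one point worth tightening is the constant $C(\delta)$ in the first half. As you note, combining Lemma \ref{Lemradder} with $|b(x,y,\eta)|\leq d(x,y)$ only yields
$$\exp\bigl(-e(X/\Gamma)(d(x,y)+C(\delta))\bigr)\leq \frac{d\nu_{x}}{d\nu_{y}}(\eta)\leq \exp\bigl(e(X/\Gamma)(d(x,y)+C(\delta))\bigr),$$
which is weaker than the constant-free inequality asserted in the corollary (though, as you say, the discrepancy is harmless wherever the corollary is used, since all downstream statements carry a multiplicative constant $C(S)$ anyway). If you want the statement exactly as written, bypass the Busemann function and argue at the level of the approximating measures: by construction $\frac{d\nu_{s,x}}{d\nu_{s,y}}(z)=\exp\bigl(-s(d(x,z)-d(y,z))\bigr)$ on $\Gamma p$, and the triangle inequality $|d(x,z)-d(y,z)|\leq d(x,y)$ gives $\exp(-sd(x,y))\leq \frac{d\nu_{s,x}}{d\nu_{s,y}}\leq \exp(sd(x,y))$ uniformly in $z$; this sandwich passes to the weak limit $s\searrow e(X/\Gamma)$ and yields the stated bound with no additive constant. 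With that substitution your argument is complete.
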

\begin{proof}
 The first inequality follows from Lemma \ref{Lemradder} and the second is a consequence of the fact that $\nu_{\gamma(x)}(\gamma(A))=\nu_{x}(A)$ and there exists some $x$ so that $\nu_{x}\left(\overline{X}\right)=1$. 
\end{proof}

\subsection{Geometry of flat surfaces}\label{sectflatmetric}
We introduce the geometry of flat surfaces and refer to \cite{Minsk1992, Streb1984}.\\
\textbf{Convention}: Any closed oriented surface is assumed to be of genus $g\geq 2$.\\
A \textit{half-translation structure} on a closed oriented surface $X$ is a choice of charts such that, away from a finite set of points $\Sigma$, the transition functions are half-translations. The pull-back of the flat metric in each chart gives a flat metric $d_\fl$ on $X-\Sigma$. We require that $d_\fl$ extends to a singular cone metric on $X$ with cone angle $k\pi$ in each point $\varsigma \in \Sigma$ where $k\geq 3$ is an integer. Then $S=(X,d_\fl)$ is a \textit{flat surface}.\\
A \textit{straight line segment} on $S-\Sigma$ is defined as the pull-back of a straight line segment on $\R^2$ in each chart. A \textit{saddle connection} is a straight line segment which emanates from one singularity and ends at another.\\
The flat metric admits an unoriented \textit{flat angle} as follows. A \textit{standard} neighborhood $U$ of a point $p \in S$ is isometric to a cone around $p$. On the boundary circle of $U$ we choose an orientation. Let $c_1,c_2$ be straight line segments, issuing from $x$. The complement $U -c_1 \cap c_2$ consists of two connected components $U_1,U_2$ which are isometric to euclidean circle sectors with angle $\vartheta_i,~ i=1,2$, possibly greater than $2\pi$. Choose $U_1$ such that the arc on $\partial U$ which connects $c_1$ with $c_2$ in direction of the boundary orientation is on the boundary of $U_1$. Then $\angle_p(c_1,c_2)$ is the sector angle $\vartheta_1$. 
\begin{Lem} 
A path $c:[0,T]\to S$ is a local geodesic if and only if it is continuous and a sequence of straight lines segments outside $\Sigma$. At the singularities $\varsigma=c(t)$ the consecutive line segments make a flat angle at least $\pi$ with respect to both boundary orientations. \\
A compact local geodesic $c:[0,T] \to S$ can be extended to a locally geodesic line $c':\R \to S$.
\end{Lem}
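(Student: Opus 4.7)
The plan is to verify the local characterization first and then build the extension on top of it.

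\textbf{Local characterization.} Away from $\Sigma$ the flat metric is locally Euclidean, so a local geodesic restricted to any arc that avoids $\Sigma$ is a straight line segment in the chart. The content of the statement lies at a singular point $\varsigma = c(t)$. Fix a standard cone neighborhood $U$ of $\varsigma$ of total cone angle $k\pi \geq 3\pi$. Suppose first that the flat angle $\vartheta_1$ on one side of $c$ at $\varsigma$ satisfies $\vartheta_1 < \pi$. Develop the Euclidean sector of angle $\vartheta_1$ isometrically into $\R^2$: the incoming and outgoing segments of $c$ open into two radii, and the straight chord joining the images of $c(t-\epsilon)$ and $c(t+\epsilon)$ lies inside the sector, avoids $\varsigma$, and is strictly shorter than the broken path through $\varsigma$ by the Euclidean triangle inequality. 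Pulling back, this gives a strictly shorter path in $S$ and contradicts local length minimization. Conversely, if both flat angles at $\varsigma$ are $\geq \pi$, any competing path $c^\ast$ in $U$ from $c(t-\epsilon)$ to $c(t+\epsilon)$ can be compared to $c$ by developing whichever side of $c$ contains $c^\ast$ into a Euclidean half-plane (or larger); the developed image of $c^\ast$ then joins the images of $c(t\pm\epsilon)$ across the developed image of the straight concatenation, and the ambient Euclidean geometry yields $l(c^\ast) \geq l(c|_{[t-\epsilon, t+\epsilon]})$.

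\textbf{Extendability.} I extend $c$ to the right; the left side is symmetric. If $c(T) \notin \Sigma$, the outgoing tangent direction is unique, and I continue as a straight segment in the Euclidean chart until this segment either runs forever inside $S \setminus \Sigma$ or first hits a singularity. If $c(T) \in \Sigma$, the cone angle at $c(T)$ is $k\pi \geq 3\pi$, and the set of outgoing unit directions making flat angle $< \pi$ on at least one side with the incoming direction is an arc of angular length exactly $2\pi$; the complement has angular length $(k-2)\pi \geq \pi$ and is nonempty, so I can pick an admissible outgoing direction and prolong $c$ by a straight segment. I iterate this procedure. Since $S$ is a compact flat surface, there is a positive lower bound $\epsilon_0 > 0$ on the length of saddle connections, hence the parameter values at successive singularity-crossings in the extension are separated by at least $\epsilon_0$; consequently the extension exhausts $[T, \infty)$.

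\textbf{Anticipated obstacle.} The subtle step is the development argument at a singularity: reflex flat angles ($\geq \pi$) unfold into a Euclidean half-plane or larger, where the triangle inequality forbids shortcuts, whereas convex flat angles ($<\pi$) unfold into a sector in which straightening produces such a shortcut. This is intuitively evident but must be phrased carefully because the cone $U$ is not globally isometric to any planar region; only each side of $c$ within $U$ is. Once the two-sided development is set up, the remaining arguments are routine.
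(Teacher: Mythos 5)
The paper does not prove this lemma at all; it simply cites Strebel (Theorem 8.1 of \emph{Quadratic Differentials}). Your proposal supplies the standard self-contained argument, and it is essentially correct: necessity of the angle condition via unfolding a convex sector and taking the chord, sufficiency via the reflex-angle cone geometry, and extension by iterated prolongation using the fact that the admissible outgoing directions at a cone point of angle $k\pi\geq 3\pi$ form a set of angular measure $(k-2)\pi>0$, together with a positive lower bound on the gap between consecutive singularity crossings. What your version buys is transparency about exactly which facts are used (local Euclidean structure off $\Sigma$, the cone-angle hypothesis $k\geq 3$, compactness of $S$); what the citation buys the paper is brevity.

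One step of yours should be tightened, as you yourself anticipate: in the sufficiency direction you compare a competitor $c^{\ast}$ to $c$ by ``developing whichever side of $c$ contains $c^{\ast}$,'' but a competing path in the cone neighborhood need not stay on one side of $c$ (it may cross the two radii of $c$ repeatedly), and a sector of angle greater than $2\pi$ does not develop injectively into the plane. The clean repair is the explicit distance formula in the Euclidean cone $C_{\theta}$: for points at radii $r_{1},r_{2}$ whose angular separation is at least $\pi$ measured both ways around the apex, the distance is exactly $r_{1}+r_{2}$, realized by the concatenation of the two radii. Since both flat angles of $c$ at $\varsigma$ are at least $\pi$, this applies verbatim and shows $\left.c\right|_{[t-\epsilon,t+\epsilon]}$ is minimizing in $U$ without any case analysis on where $c^{\ast}$ wanders. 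With that substitution the proof is complete; the remaining points (discreteness of the singular times along a geodesic, and the lower bound on saddle-connection lengths coming from embedded cone neighborhoods and the finiteness of $\Sigma$) are correctly handled.
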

\begin{proof}
 We refer to \cite[Theorem 8.1]{Streb1984}.
\end{proof}
\begin{Lem}\label{lemscdense}
 Let $S$ be a closed flat surface and $x \in S$ be any point. Fix an outgoing direction $\theta$ at $x$ and the flat angle $\angle_x$ with respect to a choice of orientation. For any $\epsilon$ there exists a geodesic $[x,\varsigma]$ which emanates from $x$ and ends at a singularity $\varsigma$ so that $\angle_x([x,\varsigma], \theta)\leq \epsilon$. 
\end{Lem}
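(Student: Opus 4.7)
My plan is to split into two cases. If the maximal straight line segment issuing from $x$ in direction $\theta$ already terminates at a singularity, take that segment itself as $[x,\varsigma]$: the flat angle with $\theta$ vanishes and we are done. Otherwise the straight ray $r_\theta \colon [0,\infty)\to S$ avoids $\Sigma$ for all positive times and lifts to an infinite straight ray $\tilde r_\theta$ issuing from a lift $\tilde x\in\tilde S$ of $x$; all the work is in this case.

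I then work in the universal cover $\tilde S$, which is a complete CAT(0) space on which $\pi_1(S)$ acts by isometries with compact quotient. Let $D:=\diam(S)$. Every point of $\tilde S$ lies within distance $D$ of some lift of some singularity, so for any large $T>0$ I can pick $\tilde\varsigma\in \pi^{-1}(\Sigma)$ with $d(\tilde r_\theta(T),\tilde\varsigma)\leq D$. The geodesic triangle in $\tilde S$ with vertices $\tilde x$, $\tilde r_\theta(T)$, $\tilde\varsigma$ has two sides at $\tilde x$ of lengths $T$ and at most $T+D$, and opposite side of length at most $D$. By the CAT(0) comparison, the angle at $\tilde x$ between the initial segment of $\tilde r_\theta$ and $[\tilde x,\tilde\varsigma]$ is bounded by the corresponding Euclidean angle, which elementary trigonometry shows to be of order $D/T$; hence this angle becomes smaller than any prescribed $\epsilon>0$ once $T$ is large enough.

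Finally I project $[\tilde x,\tilde\varsigma]$ back to $S$. Since $\pi$ is a local isometry off $\Sigma$, the image is a concatenation of straight segments; at every interior cone point the CAT(0) condition in $\tilde S$ forces the incoming and outgoing tangent directions to make CAT(0) angle exactly $\pi$, hence flat angle $\geq\pi$ on both orientations of a standard neighborhood, so the projection is a local geodesic from $x$ to the singularity $\pi(\tilde\varsigma)$. Its initial direction equals that of $[\tilde x,\tilde\varsigma]$ at $\tilde x$, and since CAT(0) and flat angles agree at a cone point for values below $\pi$, this initial direction has flat angle at most $\epsilon$ with $\theta$.

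The only non-trivial step is the Euclidean comparison estimate that yields the $O(D/T)$ bound on the angle at $\tilde x$; beyond that, everything is the standard dictionary between CAT(0) geodesics passing through cone points of $\tilde S$ and the local geodesics on $S$ described in the previous lemma, together with the compatibility of CAT(0) and flat angles at small scales.
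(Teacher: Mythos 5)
Your proof is correct, and it is a genuinely different route from the paper's, which offers no argument of its own here but simply cites Vorobets; you give a short, self-contained proof from the $\Cat(0)$ geometry of $\tilde S$ that the paper has already set up in Section \ref{sectcomhyp}. The mechanism is sound: with $D=\diam(S)$, every point of $\tilde S$ lies within $D$ of a lifted singularity (this tacitly uses $\Sigma\neq\emptyset$, which follows from Gauss--Bonnet since $g\geq 2$ -- worth a word); in the comparison triangle for $\tilde x,\ \tilde r_\theta(T),\ \tilde\varsigma$ with $a=T$, $b\geq T-D$, $c\leq D$ one has $2ab(1-\cos\gamma)=c^2-(a-b)^2\leq D^2$, so the comparison angle at $\tilde x$ is $O(D/T)$; the Alexandrov angle at a cone point equals the minimum of $\pi$ and the distance in the link circle, so for large $T$ the link distance between $\theta$ and the initial direction of $[\tilde x,\tilde\varsigma]$ drops below $\epsilon$; and projecting by the local isometry $\pi$ preserves both the local-geodesic property and the angle at $x$. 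The one caveat is that this controls only the unoriented angle, i.e.\ the smaller of the two sector angles between $[x,\varsigma]$ and $\theta$, whereas the statement, read with the paper's orientation-dependent definition of $\angle_x$, prescribes on which side the sector of size $\leq\epsilon$ must lie. This costs one more line: your estimate shows that the directions at $x$ of geodesics to singularities form a dense subset of the circle of directions, and a dense subset of a circle meets the nonempty open arc of directions lying within $\epsilon$ of $\theta$ on the prescribed side. What your approach buys is an explicit, elementary and quantitative argument; what the citation buys is brevity and a stronger density statement, none of which is needed for the paper's applications (Proposition \ref{Propinfsing} and Lemma \ref{Lemmeasshad} use only density of singularity directions, which your argument delivers).
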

\begin{proof}
We refer to \cite[Proposition 3.1]{Vorob1996}.
 \end{proof}
One of the main concepts is the \textit{Gauss-Bonnet formula}, see \cite{Hubba2006}. Let $P$ be a compact flat surface with piecewise geodesic boundary and denote by $\chi(P)$ the Euler characteristic of $P$. For $x \in \interior{P}$, define $\vartheta(x)$ to be the cone angle of $x$ in $P$. For $x\in \partial P$, $\vartheta(x)$ is the cone angle at $x$ inside $P$. Then 
$$2\pi\chi(P)=\sum_{x \in \interior{P}} (2\pi-\vartheta(x))+\sum \limits_{x\in \partial P} (\pi-\vartheta(x)).$$ 
As a corollary, a flat surface does not contain geodesic bigons.
\begin{Prop}
 In any homotopy class of arcs with fixed endpoints on a closed flat surface there exists a unique local geodesic which is length-minimizing.\\ 
Moreover, in any free homotopy class of closed curves there is a length-minimizing locally geodesic representative.
\end{Prop}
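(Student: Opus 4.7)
The plan is to pass to the universal cover $\tilde{S}$, which, because each cone angle at a preimage of $\Sigma$ equals $k\pi\geq 3\pi\geq 2\pi$, carries (by Berestovskii's link criterion applied locally at singularities and the Cartan--Hadamard theorem globally) a complete $\Cat(0)$ metric. Both parts of the statement will then follow from three standard $\Cat(0)$ facts: any two points are joined by a unique geodesic, every local geodesic is already a global one, and every non-trivial deck transformation of a free, properly discontinuous, cocompact isometric action is a hyperbolic isometry with a translation axis.

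For the first part, fix endpoints $p,q\in S$ and a homotopy class $[\alpha]$ of arcs between them. A choice of lift $\tilde{p}$ of $p$ together with $[\alpha]$ determines a unique lift $\tilde{q}$ of $q$, namely the endpoint of any lift of any $\alpha\in[\alpha]$ starting at $\tilde{p}$. Let $[\tilde{p},\tilde{q}]$ be the unique $\Cat(0)$-geodesic between them. Its projection $\pi([\tilde{p},\tilde{q}])$ is a local geodesic since $\pi$ is a local isometry, lies in $[\alpha]$ because $\tilde{S}$ is simply connected, and is length-minimizing because any other arc in $[\alpha]$ lifts to a path from $\tilde{p}$ to $\tilde{q}$ of at least the same length. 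Uniqueness of the length-minimizing local geodesic follows because any other local geodesic in $[\alpha]$ lifts to a local, and hence global, geodesic from $\tilde{p}$ to $\tilde{q}$, and must therefore coincide with $[\tilde{p},\tilde{q}]$.

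For the second part, let $[\alpha]$ be a non-trivial free homotopy class and let $\gamma\in\pi_1(S)$ represent the corresponding conjugacy class. The isometry $\gamma$ is hyperbolic, hence admits an axis $L$: a $\gamma$-invariant geodesic line on which $\gamma$ acts by translation of length $|\gamma|=\inf_{x\in\tilde{S}}d(x,\gamma x)$. The image $\pi(L)$ is then a closed local geodesic in $[\alpha]$ of length $|\gamma|$, while any competing loop in $[\alpha]$ lifts to a path from some $x$ to $\gamma x$ and therefore has length at least $|\gamma|$.

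The only delicate step is the $\Cat(0)$ property of $\tilde{S}$, which is why I would verify the link condition at singular preimages explicitly; the remainder is a routine application of classical $\Cat(0)$ group theory as found in \cite[Chapter II]{BridsH1999}, and the absence of geodesic bigons noted after Gauss--Bonnet is not needed here.
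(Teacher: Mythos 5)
Your proof is correct, but it takes a genuinely different route from the paper's. The paper stays on the surface itself: existence of a length-minimizing representative comes from a standard Arzel\`{a}--Ascoli compactness argument, and uniqueness comes from the absence of geodesic bigons, which is a consequence of the Gauss--Bonnet formula stated just before the proposition. You instead pass to the universal cover, establish that it is $\Cat(0)$ (cone angles $k\pi\geq 3\pi>2\pi$ plus Cartan--Hadamard), and then invoke unique geodesics, the fact that local geodesics in $\Cat(0)$ spaces are global, and the existence of axes for the hyperbolic deck transformations of a free proper cocompact action. All of these steps are standard and correctly deployed; in particular you are right to claim only existence (not uniqueness) in the free homotopy case, where flat cylinders genuinely produce one-parameter families of minimizers, and your argument in the fixed-endpoint case actually yields the slightly stronger conclusion that the homotopy class contains a \emph{unique local geodesic at all}, not merely a unique length-minimizing one. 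The trade-off is one of dependencies: the paper's sketch is self-contained within Section 2, using only Gauss--Bonnet, whereas your argument leans on the $\Cat(0)$ structure of $\tilde{S}$, which the paper only introduces (with a citation) at the start of Section \ref{sectcomhyp}; since that structure is needed throughout the rest of the paper anyway, this is a perfectly reasonable price, and your route replaces the somewhat delicate innermost-bigon analysis implicit in the paper's uniqueness claim with clean general theory. One cosmetic point: the constant loops in the trivial free homotopy class should be excluded or treated as degenerate geodesics, as your axis argument applies only to non-trivial classes.
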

\begin{proof}
In both cases, the existence follows from a standard Arzel\`{a}-Ascoli argument. The uniqueness follows from the absence of geodesic bigons. 
\end{proof}
A \textit{flat cylinder} of height $h$ and circumference $c$ on a flat surface $S$ is an isometric embedding of $[0,c] \times (0,h)/\sim, (0,t)\sim (c,t) $ into $S$. A flat cylinder is \textit{maximal} if it cannot be extended. \\ 
Like in hyperbolic geometry, closed local geodesics on flat surfaces do not have arbitrary intersections.
\begin{Lem} \label{Lemintnumbergeod}
 Let $\alpha, \beta$ be closed curves on a flat surface $S$ and $\alpha_{\fl}, \beta_\fl$ be a choice of locally geodesic representatives in the free homotopy class. If the number of intersection points of $\alpha_\fl$, $\beta_\fl$ is bigger than the geometric intersection number $i([\alpha],[\beta])$, then the local geodesics $\alpha_\fl$ and $\beta_\fl$ share some arcs which start and end at singularities. The arcs might be degenerated to singular points.
\end{Lem}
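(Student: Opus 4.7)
The strategy is to argue by contradiction using the bigon criterion together with the Gauss-Bonnet obstruction to geodesic bigons noted just after the Gauss-Bonnet formula above. Assume the number of intersection points of $\alpha_\fl$ and $\beta_\fl$ strictly exceeds $i([\alpha],[\beta])$. After a small perturbation outside any already shared subarcs and away from $\Sigma$, one may invoke the standard bigon criterion on closed surfaces to obtain an innermost embedded disc $D\subset S$ whose boundary decomposes as the concatenation of a subarc $a\subset\alpha_\fl$ and a subarc $b\subset\beta_\fl$ meeting only at two corner points $p_1,p_2$.

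Suppose for contradiction that $a$ and $b$ meet only at $p_1$ and $p_2$. Then Gauss-Bonnet applied to $D$ gives
$$ 2\pi \;=\; \sum_{x\in \interior{D}}(2\pi-\vartheta(x)) \;+\; \sum_{x\in\partial D}(\pi-\vartheta(x)). $$
Interior cone points contribute $\leq 0$; smooth boundary points contribute $0$; boundary singularities that are not corners contribute $\leq 0$, because a local geodesic traverses a singularity only when the flat angle is at least $\pi$ on each side. Hence the right-hand side is bounded above by $(\pi-\vartheta_1)+(\pi-\vartheta_2)=2\pi-\vartheta_1-\vartheta_2<2\pi$, since both corner angles $\vartheta_i$ inside $D$ are strictly positive. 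This contradiction forces $a$ and $b$ to share a non-trivial subarc.

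Finally, let $c$ be a maximal arc along which $\alpha_\fl$ and $\beta_\fl$ coincide. At either endpoint $q$ of $c$ the two geodesics must branch, for otherwise $c$ could be enlarged. But at a point of $S\setminus\Sigma$ a local geodesic has a unique continuation in each direction, so branching is only possible at singularities; thus both endpoints of $c$ lie in $\Sigma$, allowing the degenerate case in which $c$ is a single singular intersection point at which the two local geodesics cross without overlapping on a positive-length arc. The main technical point I expect is a careful formulation of the bigon criterion when the two curves are non-simple and may already be non-transverse along shared subarcs; I would address this by first excising small open neighborhoods of the maximal already-shared arcs, which puts the remaining pieces into transverse position, and then running the Gauss-Bonnet step on an innermost bigon formed by those pieces.
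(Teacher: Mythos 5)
Your proposal follows essentially the same route as the paper: the paper's proof simply invokes the absence of geodesic bigons (its stated corollary of Gauss--Bonnet) to conclude the curves must share an arc, and then observes that local geodesics, being straight outside $\Sigma$, can only branch apart at singularities. You merely spell out the Gauss--Bonnet angle count on the bigon disc that the paper leaves as a citation, so the two arguments coincide in substance.
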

 \begin{proof}
If $\alpha_\fl$ and $\beta_\fl$ have more points in common than $i(\alpha, \beta)$, by absence of geodesic bigons, $\alpha_\fl$,$\beta_\fl$ share some arc. As local geodesics in flat surfaces are straight line segments outside the singularities, two local geodesics having some arc in common can ones drift apart at singularities. So they can ones share arcs with singularities as start- and endpoints. 
\end{proof}
\begin{Prop}\label{Propjoingeod}
For a closed flat surface $S$ there is a constant $C_l(S)>0$ so that following holds:\\
For any two parametrized geodesics $c,c'$ on $S$, so that $c$ ends at a singularity and $c'$ issues from some singularity there is a local geodesic $g$ which first passes through $c$ and eventually passes through $c'$ and which is of length at most $C_l(S)+l(c)+l(c')$.
\end{Prop}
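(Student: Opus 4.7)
Let $\varsigma_1$ denote the terminal singularity of $c$ and $\varsigma_2$ the initial singularity of $c'$. For $g=c\cdot m\cdot c'$ to be a local geodesic the initial direction of $m$ at $\varsigma_1$ must lie in a closed arc $A_1$ of length $(k_1-2)\pi\geq\pi$ in the direction cone at $\varsigma_1$ (the directions at flat angle at least $\pi$ on both sides from the arrival direction of $c$), and symmetrically the reverse arrival direction of $m$ at $\varsigma_2$ must lie in a closed arc $A_2$ of length $\geq\pi$ in the direction cone at $\varsigma_2$; both arcs have length at least $\pi$ because every cone angle is $\geq 3\pi$. The task reduces to constructing such an $m$ of length bounded only in terms of $S$.

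The central technical ingredient is a uniform existence lemma: there is $L(S)>0$ such that for every singularity $\sigma\in\Sigma$ and every closed arc $A$ of length $\pi$ in the direction cone at $\sigma$, some saddle connection issues from $\sigma$ with initial direction in $A$ and length at most $L(S)$. The parameter space of pairs $(\sigma,A)$ is compact (a finite disjoint union of circles, one per singularity). For each such pair, Lemma~\ref{lemscdense} yields pointwise finiteness of the infimal-length function $\Phi(\sigma,A)$; then an upper-semi-continuity argument bounds $\Phi$ uniformly, since any near-minimising saddle connection whose initial direction lies in the open interior of $A$ remains a valid saddle connection for all nearby $(\sigma',A')$. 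Thus $L(S):=\sup\Phi<\infty$.

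Applying this lemma at $\varsigma_1$ with a $\pi$-subarc of $A_1$ yields a saddle connection $s_1$ of length $\leq L(S)$ with initial direction in $A_1$, ending at a singularity $\tau_1$; symmetrically, it produces $s_2$ from $\varsigma_2$ of length $\leq L(S)$ with initial direction in $A_2$, ending at $\tau_2$. The arrival direction of $s_1$ at $\tau_1$ determines a new valid $\pi$-arc $A'_{\tau_1}$ at $\tau_1$, and analogously $A'_{\tau_2}$ at $\tau_2$ from $s_2$. The proposition will follow once I produce a local geodesic $\mu$ from $\tau_1$ to $\tau_2$ of length at most some uniform $R(S)$, whose initial direction at $\tau_1$ lies in $A'_{\tau_1}$ and whose reverse arrival direction at $\tau_2$ lies in $A'_{\tau_2}$: then $g:=c\cdot s_1\cdot\mu\cdot\bar s_2\cdot c'$ is a local geodesic of length $\leq l(c)+l(c')+2L(S)+R(S)$, and $C_l(S):=2L(S)+R(S)$ works.

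For this bridging step I would work in the universal cover $\tilde S$. Fixing a lift $\tilde\tau_1$, each lift $\tilde\tau_2$ of $\tau_2$ determines the unique $\Cat(0)$ geodesic $[\tilde\tau_1,\tilde\tau_2]$, which projects to a local geodesic on $S$ with well-defined endpoint directions. By cocompactness of the $\pi_1(S)$-action and exponential volume growth of $\tilde S$, as $\tilde\tau_2$ ranges over lifts with $d(\tilde\tau_1,\tilde\tau_2)\leq R$ the resulting endpoint-direction pairs form a finer and finer net in the product of the two direction cones; for $R=R(S)$ large enough the net is finer than $\pi/2$, so every pair of $\pi$-arcs $(A'_{\tau_1},A'_{\tau_2})$ is realised by some such $\mu$. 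This density-of-pairs argument in the universal cover is the most delicate step and the main obstacle; as an alternative one may instead iterate the uniform existence lemma, repeatedly appending saddle connections, and invoke a pigeonhole argument on the finite set $\Sigma$ to force termination at $\varsigma_2$ with the prescribed terminal angle in uniformly bounded length.
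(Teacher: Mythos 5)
The paper offers no argument here---it simply cites \cite{Dankwart2011}---so there is nothing internal to compare against; I can only assess your proposal on its own terms. Your reduction is sound: the angle conditions at the two singularities are correctly encoded as membership of the outgoing (resp.\ reversed arrival) direction in closed arcs of length $(k-2)\pi\geq\pi$, and your uniform saddle-connection lemma does follow from Lemma~\ref{lemscdense} by the compactness/semicontinuity argument you sketch, since the parameter space is a finite union of circles and the condition ``direction in the open interior of $A$'' is stable under small perturbation of $A$. Note, though, that appending $s_1$ and $s_2$ buys you nothing: the data $(\tau_1,A'_{\tau_1},\tau_2,A'_{\tau_2})$ is of exactly the same type as $(\varsigma_1,A_1,\varsigma_2,A_2)$, so the entire content of the proposition is concentrated in the bridging step you defer to the end.

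That bridging step is a genuine gap, and neither of your two arguments closes it. Cocompactness of $\Gamma$ together with the $\Cat(0)$ comparison-angle inequality does show that the initial directions of the geodesics $[\tilde\tau_1,\gamma\tilde\tau_2]$ form an arbitrarily fine net in the direction circle at $\tau_1$ as $R\to\infty$, and symmetrically for the terminal directions at $\tau_2$; but this controls the two marginals, not the joint distribution of the pairs, and a priori every geodesic leaving $\tilde\tau_1$ through a given arc could arrive at lifts of $\tau_2$ through a fixed proper subset of directions. Exponential volume growth only says the net has many points, not that it spreads out in the product. The pigeonhole alternative fails for a different reason: pigeonholing on the finite set $\Sigma$ can force a repeated singularity, but the state you must reach is a pair (specific singularity $\varsigma_2$, arrival direction inside the prescribed arc $A_2$), and the direction coordinate ranges over a continuum, so iterating the saddle-connection lemma never guarantees termination in the required state. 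What is needed is a transitivity statement: for any two singularities and any two $\pi$-arcs of directions there is a connecting local geodesic with both endpoint directions in the prescribed open arcs. This is true, but it requires a real argument---for instance, choosing a hyperbolic deck transformation whose attracting and repelling fixed points lie in the two open boundary shadows determined by the arcs, and using convexity of the distance function in the $\Cat(0)$ cover to convert long fellow-travelling of $[\tilde\tau_1,\gamma^n\tilde\tau_2]$ with the corresponding rays into agreement of initial directions. Some such argument must be supplied before your proof is complete.
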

\begin{proof}
 We refer to \cite{Dankwart2011}.
\end{proof}
 
\section{Asymptotic comparison of hyperbolic and flat geodesics}\label{sectcomhyp}
The isometric universal cover $\pi: \tilde{S} \to S$ of a closed flat surface $S$ is a topological disc with $\Cat (0)$ metric. By the \v{S}varc-Milnor Lemma, $\tilde{S}$ is quasi-isometric to the Poincar\'{e} disc and so $\delta$-hyperbolic for some $\delta$, see \cite{Dankwart2011} for more precise estimates. Denote by $\Gamma$ the group of deck transformations.\\
For a closed flat surface $S=(X,d_\fl)$ let $\sigma$ be the hyperbolic metric in the same conformal class as $d_\fl$. Recall that the length of a homotopy class of arcs with fixed endpoints $l_*([c]),~*=d_\fl,\sigma$ is defined as the length of the shortest arc for the metric.\\ 
To investigate the relationship between the two metrics, we compare the geodesic flows. 
 Let $\tau: T^1X \to X$ be the unit tangent bundle of $X$ and let $\ell_*, *=d_\fl,\sigma$ be the Lebesgue measure on $X$ defined by the flat resp. hyperbolic metric. Denote by $\mathfrak{m}$ the Liouville measure for the hyperbolic metric on the unit tangent bundle and let $\mathfrak{m}_x, x \in X$ be the induced measure of the fiber $T^1_x X\cong S^1$.\\
The measures are defined in the same way on the universal cover $\pi:\tilde{X} \to X$ endowed with the lifted structure, i.e. the unit tangent bundle $\tau:T^1 \tilde{X} \to \tilde{X}$, the hyperbolic or flat metric and the covering map $\pi$ naturally extends to $\pi:T^1\tilde{X} \to T^1X$.\\
The geodesic flow $g_t:T^1X\to T^1X$ for the hyperbolic metric $\sigma$ on $T^1X$ acts ergodically with respect to $\mathfrak{m}$.\\ 
 To each point $v \in T^1X$ and to each $i, j \in \R $, associate the a local geodesic $c_{i,j}:[i,j] \to X, c_{i,j}(t):= \tau(g_{t}(v))$. Define 
$$F_{i,j}: T^1 X \to \R_+: v \mapsto l_\fl([c_{i,j}])$$
to be the flat length of the homotopy class $[c_{i,j}]$.
 By the subadditive ergodic theorem $\lim\limits_{T \rightarrow \infty} 1/TF_{0,T}$ converges to a constant function $F$ with respect to the $L_1$-norm.
\begin{Lem}\label{Lemfactergprocess}
There exists a constant $\lambda>0$ such that for each $s \geq 0, ~v\in T^1X$
$$F_{0,s}\leq F_{0,s+t}, \forall t \geq \lambda.$$
\end{Lem}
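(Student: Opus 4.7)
I would lift the hyperbolic segment to the universal cover and reduce the statement to a single application of the Morse stability lemma for quasi-geodesics, exploiting that the hyperbolic and flat metrics on $\tilde X$ are bi-Lipschitz equivalent at coarse scale.

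More concretely: lift $c_{0,s+t}$ to a hyperbolic geodesic $\tilde c:[0,s+t]\to \tilde X$ and set $\tilde p=\tilde c(0)$, $\tilde p'=\tilde c(s)$, $\tilde q=\tilde c(s+t)$. Since the flat length-minimiser in each homotopy class with fixed endpoints is unique, $F_{0,u}(v)=d_\fl(\tilde p,\tilde c(u))$ for every $u\in[0,s+t]$. Thus the claim reduces to
$$d_\fl(\tilde p,\tilde p')\;\le\;d_\fl(\tilde p,\tilde q)\qquad\text{whenever }t=d_\sigma(\tilde p',\tilde q)\ge\lambda.$$

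Next, I would invoke \v{S}varc--Milnor: because $\Gamma$ acts properly, freely and cocompactly by isometries on $\tilde X$ in both metrics, there is a constant $L=L(S)\ge 1$ such that the identity is an $L$-quasi-isometry $(\tilde X,d_\sigma)\to(\tilde X,d_\fl)$; in particular $\tilde c$ is an $L$-quasi-geodesic for $d_\fl$. As recalled in the preamble of Section~\ref{sectcomhyp}, $(\tilde X,d_\fl)$ is $\delta$-hyperbolic for some $\delta=\delta(S)$, so the Morse-type stability lemma of Section~\ref{sectgrhyp} produces a constant $H=H(L,\delta)$ such that the flat geodesic $\tilde\gamma$ from $\tilde p$ to $\tilde q$ stays within flat Hausdorff distance $H$ of $\tilde c$.

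To finish, pick $\tilde x\in\tilde\gamma$ with $d_\fl(\tilde x,\tilde p')\le H$. Since $\tilde x$ lies on the flat geodesic from $\tilde p$ to $\tilde q$, $d_\fl(\tilde p,\tilde x)+d_\fl(\tilde x,\tilde q)=d_\fl(\tilde p,\tilde q)$. Two triangle inequalities together with the quasi-isometry bound yield
$$d_\fl(\tilde p,\tilde x)\ge d_\fl(\tilde p,\tilde p')-H,\qquad d_\fl(\tilde x,\tilde q)\ge d_\fl(\tilde p',\tilde q)-H\ge L^{-1}t-L-H,$$
so summing gives $d_\fl(\tilde p,\tilde q)\ge d_\fl(\tilde p,\tilde p')+L^{-1}t-L-2H$. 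The choice $\lambda:=L(L+2H)$ makes the right-hand side $\ge d_\fl(\tilde p,\tilde p')$ as soon as $t\ge\lambda$, which is the desired inequality. The only real obstacle is the setup---extracting uniform quasi-isometry constants between $d_\sigma$ and $d_\fl$ on $\tilde X$ and the hyperbolicity constant of $(\tilde X,d_\fl)$---but both ingredients are already in place from Section~\ref{sectgrhyp} and the opening of Section~\ref{sectcomhyp}, so the argument needs no iteration, ergodic input, or analysis of the flat singularities.
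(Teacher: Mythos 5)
Your argument is correct and follows essentially the same route as the paper's proof: both use the $L$-quasi-isometry between the flat and hyperbolic metrics on $\tilde X$ together with the Morse stability lemma for quasi-geodesics in the $\delta$-hyperbolic universal cover to obtain coarse superadditivity of $F$, and then absorb the additive error by taking $t$ large enough that $F_{s,s+t}\geq L^{-1}t-L$ dominates it. You merely make explicit (via the point $\tilde x$ on the flat geodesic) the step the paper summarizes as $F_{0,t}+F_{t,t+s}\leq F_{0,s+t}+\lambda'$.
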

\begin{proof}
Since the flat and the hyperbolic metric on the universal cover $\tilde{X}$ are $L$-quasi-isometric for some $L$, $L^{-1}|i-j|-L\leq F_{i,j}\leq L|i-j|+L$. As $L$-quasi-geodesics with the same endpoints in $\delta$-hyperbolic spaces have uniformly bounded Hausdorff distance, there is some $\lambda'>0$ that
$$F_{0,t}(v)+ F_{t,t+s}(v) \leq F_{0,s+t}(v) +\lambda',\forall s,t>0, ~ v \in T^1X.$$
For $\lambda:= L(\lambda'+1)$ this proves the claim. 
\end{proof}

\subsection{Comparison with the entropy}
\begin{Thm}\label{ThmentroF}
For a closed flat surface $S=(X,d_\fl)$, the entropy and the constant $F$ are related.
$$e(S) \geq F^{-1}$$
\end{Thm}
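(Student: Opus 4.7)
The plan is to combine the subadditive ergodic theorem's control on $F_{0,T}$ with Sullivan's shadow lemma for the flat Patterson--Sullivan measure $\nu_{\tilde{x}_0}$, using crucially that $\nu_{\tilde{x}_0}(\partial \tilde{S})$ is finite by Corollary \ref{Corestrad}. Heuristically: along a Liouville-typical hyperbolic geodesic ray of length $T$, the endpoint lies within flat distance $\approx FT$ of the starting point, and its flat shadow in $\partial \tilde{S}$ carries $\nu_{\tilde{x}_0}$-mass at least $\exp(-e(S)FT)$; the exponential sphere-growth of $\mathbb{H}^2$ then allows us to pack $\sim e^T$ such shadows pairwise disjointly, and their total mass can remain bounded only if $e(S)F \geq 1$.

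First I would apply Egorov to the $\mathfrak{m}$-a.e.\ convergence $F_{0,T}/T \to F$ from the subadditive ergodic theorem. The local disintegration $d\mathfrak{m} = d\ell_\sigma(x)\,d\theta$ combined with Fubini produces a point $\tilde{x}_0 \in \tilde{S}$ such that, for every $\epsilon > 0$, there exist $T_\epsilon$ and a set $A_\epsilon \subset T^1_{\tilde{x}_0}\tilde{S}$ of angular measure exceeding $2\pi - \epsilon$ with $F_{0,T}(\tilde{x}_0, \theta) \leq (F+\epsilon)T$ for every $\theta \in A_\epsilon$ and $T \geq T_\epsilon$. Next I would pick a maximal $re^{-T}$-angularly separated subset $\{\theta_1, \ldots, \theta_{N(T)}\} \subset A_\epsilon$, of cardinality $N(T) \gtrsim (2\pi - \epsilon) e^T / (2r)$. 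Since two hyperbolic rays from $\tilde{x}_0$ with angular separation $\geq re^{-T}$ reach hyperbolic distance of order $r$ at time $T$, and the flat and hyperbolic metrics on $\tilde{S}$ are quasi-isometric, for $r$ large the hyperbolic endpoints $p_i := \gamma^\sigma_{\theta_i}(T)$ are at pairwise flat distance exceeding $4\rho$; in the $\Cat(0)$ space $(\tilde{S},d_\fl)$ this ensures that the boundary shadows $\partial sh_{\tilde{x}_0}(B^\fl(p_i, \rho))$ in $\partial \tilde{S}$ are pairwise disjoint.

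Sullivan's shadow lemma, which follows from the Radon--Nikodym estimates in Lemma \ref{Lemradder} together with a standard uniform lower bound on the $\nu$-measure of a shadow of fixed size in the cocompact setting, then yields constants $\rho_0, c_1 > 0$ such that for $\rho \geq \rho_0$,
\begin{align*}
\nu_{\tilde{x}_0}(\partial sh_{\tilde{x}_0}(B^\fl(p_i,\rho))) &\geq c_1 \exp(-e(S) d_\fl(\tilde{x}_0, p_i)) \\
&= c_1 \exp(-e(S) F_{0,T}(\tilde{x}_0, \theta_i)) \\
&\geq c_1 \exp(-e(S)(F+\epsilon)T),
\end{align*}
using that $F_{0,T}(\tilde{x}_0, \theta_i) = d_\fl(\tilde{x}_0, p_i)$ in the universal cover. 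Summing the disjoint contributions,
$$\nu_{\tilde{x}_0}(\partial \tilde{S}) \geq N(T) c_1 \exp(-e(S)(F+\epsilon)T) \gtrsim \exp(T(1 - e(S)(F+\epsilon))),$$
and since $\nu_{\tilde{x}_0}(\partial \tilde{S})$ is finite independently of $T$, the exponent must be non-positive, i.e.\ $1 \leq e(S)(F+\epsilon)$. Letting $\epsilon \to 0$ gives $e(S)F \geq 1$.

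The main obstacle is the packing step: the hyperbolic angular separation $re^{-T}$ must translate through the quasi-isometry into a flat separation of the $p_i$ large enough to keep the shadows disjoint, while the same shadow radius $\rho$ must simultaneously exceed the threshold $\rho_0$ required by Sullivan's lemma. Calibrating these two constants against the quasi-isometry constant $L$ and the hyperbolic constant $\delta$ of $(\tilde{S}, d_\fl)$ is the technical heart of the argument.
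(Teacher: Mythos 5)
Your strategy is genuinely different from the paper's. The paper argues by contradiction with a volume comparison entirely inside $\tilde{S}$: assuming $e(S)<F^{-1}-3\epsilon$, the hyperbolic ball $B^\sigma_{\tilde{x}}(r(e(S)+\epsilon))$ covers the flat ball $B^{\fl}_{\tilde{x}}(r)$ up to a set swept out by directions where $F_{0,\cdot}$ is atypically large (measure $c_r(x)\to 0$ by the $L^1$-convergence), and comparing the $\ell_\sigma$-growth rates of the two families of balls gives $e(S)+\epsilon\leq e(S)$. No boundary measures appear. Your route through the Patterson--Sullivan measure, a Sullivan shadow lemma, and an exponential packing of shadows is a legitimate alternative: the shadow lemma you invoke is available here (the paper proves a version of it for singularities in Lemma \ref{Lemmeasshad}, and the general cocompact case is in Coornaert's work it cites), the identity $d_\fl(\tilde{x}_0,p_i)=F_{0,T}(\tilde{x}_0,\theta_i)$ is correct, and the counting $N(T)\gtrsim e^{T}$ against finiteness of $\nu_{\tilde{x}_0}(\partial\tilde{S})$ closes the argument. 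What your approach buys is a direct link between $F$ and the boundary measure; what the paper's buys is that it needs nothing beyond the definition of volume entropy as growth of a $\Gamma$-invariant Radon measure.

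There is, however, one step that fails as written: the claim that pairwise flat distance $d_\fl(p_i,p_j)>4\rho$ in the $\Cat(0)$ space $(\tilde{S},d_\fl)$ forces the shadows $\partial sh_{\tilde{x}_0}(B^\fl(p_i,\rho))$ to be disjoint. Large separation of the centers never implies disjoint shadows: if $p_j$ lies roughly on the flat geodesic ray from $\tilde{x}_0$ through $p_i$ and beyond it, the shadow of $B^\fl(p_j,\rho)$ is (coarsely) contained in that of $B^\fl(p_i,\rho)$, however far apart the centers are. This configuration is not excluded by your setup, because you only control $d_\fl(\tilde{x}_0,p_i)$ from above by $(F+\epsilon)T$ and from below by $L^{-1}T-L$, so different $p_i$ sit at very different flat radii; a direct Gromov-product computation in $d_\fl$ does not give the bound $(p_i\cdot p_j)^{\fl}_{\tilde{x}_0}<\min_k d_\fl(\tilde{x}_0,p_k)-\rho-O(\delta)$ that disjointness requires. (Also, the hyperbolic distance between endpoints of rays with angular separation $re^{-T}$ is of order $\log r$, not $r$, though that is harmless.) The fix is to run the disjointness through the hyperbolic metric rather than the flat one: a flat geodesic from $\tilde{x}_0$ meeting $B^\fl(p_i,\rho)$ is an $L$-quasi-geodesic for $\sigma$, hence lies within Hausdorff distance $H(L,\delta)$ of the hyperbolic geodesic with the same endpoints, so the flat shadow of $B^\fl(p_i,\rho)$ is contained in the hyperbolic shadow of $B^\sigma(p_i,R)$ with $R=L\rho+L+H(L,\delta)$; since the $p_i$ all lie on the hyperbolic circle of radius $T$ about $\tilde{x}_0$, elementary hyperbolic trigonometry shows these hyperbolic $R$-shadows are disjoint once the angular separation exceeds $C(R)e^{-T}$, which fixes $r:=C(R)$ independently of $T$. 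With that substitution your proof is complete; as written, the disjointness step is unjustified and the stated justification is false.
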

\begin{proof}
Assume on the contrary that $e(S) < F^{-1} -3\epsilon$ for some $0<\epsilon < F^{-1}$ and notice that $\frac{1}{F^{-1}-\epsilon}>F$. \\
For $x\in X$ define
$$c_r(x):= \mathfrak{m}_x\left(v \in T^1_{x}X: F_{0,r(F^{-1}-\epsilon)}(v)\geq r\right). $$ 
Since $r^{-1}F_{0,r}$ converges towards $F$ with respect to the $L_1$-norm, for each $\ell_\sigma$-typical point $x$, $c_r(x)$ tends to zero if $r$ tends to infinity.\\
 Fix $\tilde{x}\in \pi^{-1}(x) \subset \tilde{X}$, a preimage of $x$ in the universal cover which is $\ell_\sigma$-typical and let $B^*_{\tilde{x}}(r)$ be the open metric ball with center $\tilde{x}$ and radius $r$ with respect to the metric $*=\sigma,d_\fl$. Notice that 
$$\lim\limits_{r\rightarrow \infty}\frac{\log\Bigl(\ell_\sigma\Bigl(B^\sigma_{\tilde{x}}(r)\Bigr)\Bigr)}{r}=1.$$ 
$\ell_\sigma$ is a $\Gamma$-invariant Radon measure on $\tilde{X}$ and so
$$ e(S)= \limsup\limits_{r \rightarrow \infty}\frac{log\left(\ell_{\sigma}\left(B^{\fl}_{\tilde{x}}(r)\right)\right)}{r}.$$
One estimates $F$ by comparing the volume of metric balls:
\begin{eqnarray*}
&& \ell_{\sigma}\Bigl(B_{\tilde{x}}^\sigma(r(e(S)+\epsilon))\Bigr)-\ell_{\sigma}\Bigl( B_{\tilde{x}}^{\fl}(r)\Bigr)\\
&\leq& \ell_{\sigma}\Bigl(B_{\tilde{x}}^\sigma(r(e(S)+\epsilon))- B_{\tilde{x}}^{\fl}(r)\Bigr)\\
&=& \ell_{\sigma}\Bigl( \left\{\tau(g_t(v))|(v,t)\in T^1_{\tilde{x}}\tilde{X}\times [0, r(e(S)+\epsilon)]:F_{0,t}(\pi(v))\geq r\right\}\Bigr)
\end{eqnarray*}
For $\lambda$ as in Lemma \ref{Lemfactergprocess} let $r$ be so large that $r\epsilon \geq 2\lambda$ and so 
$$F_{0,r(F^{-1}-\epsilon)}(\pi(v)) \geq F_{0,t}(\pi(v)),~\forall v \in T^1\tilde{X},~t\in [0, r(e(S)+\epsilon)].$$
Therefore, we can estimate
\begin{eqnarray*}
&&\ell_{\sigma}\Bigl(B_{\tilde{x}}^\sigma(r(e(S)+\epsilon))- B_{\tilde{x}}^{\fl}(r)\Bigr)\\
&\leq& \ell_{\sigma}\left(\left\{\tau(g_t(v))|(v,t)\in T^1_{\tilde{x}}\tilde{X}\times [0, r(e(S)+\epsilon)]:F_{0,r(F^{-1}-\epsilon)}(\pi(v))\geq r\right\}\right)\\
&=& \ell_{\sigma}\Bigl(B_{\tilde{x}}^\sigma(r(e(S)+\epsilon))\Bigr)c_r(x)
\end{eqnarray*}
In summary:
\begin{eqnarray*}
&&\ell_{\sigma}\Bigl(B_{\tilde{x}}^\sigma(r(e(S)+\epsilon))\Bigr)-\ell_{\sigma}\Bigl(B_{\tilde{x}}^{\fl}(r)\Bigr)\leq \ell_{\sigma}\Bigl(B_{\tilde{x}}^\sigma(r(e(S)+\epsilon))\Bigr)c_r(x)\\
&\Leftrightarrow& \frac{log(1- c_r(x))+\log\Bigl(\ell_{\sigma}\Bigl(B_{\tilde{x}}^\sigma(r(e(S)+\epsilon)\Bigr)\Bigr)}{r}\leq \frac{log\Bigl(\ell_{\sigma}\Bigl(B_{\tilde{x}}^{\fl}(r)\Bigr)\Bigr)}{r}
\end{eqnarray*}
If $r$ tends to infinity, the left term tends to $e(S)+\epsilon$ whereas the right term is bounded from above by $e(S)$, what is a contradiction.
\end{proof}
\subsection{Comparison with the Rafi constant}
We showed that for a closed flat surface $S=(X,d_\fl)$ the entropy bounds the asymptotic quotient of flat and hyperbolic length of geodesic arcs. 
A similar quantity was defined in \cite{Rafi2007}. For a free homotopy class simple of closed curves $[\alpha]$ in $X$ denote:
$$l_*([\alpha]):=\min_{\alpha'\in [\alpha]}l_i(\alpha'),*=d_\fl, \sigma$$
In each free homotopy class of closed curves the minimum is attained by a local geodesic: 
 $$l_*([\alpha])=l_*(\alpha_*),*=d_\fl, \sigma$$
We remark the following standard facts concerning the geometry of a closed hyperbolic surface $(X,\sigma)$ and refer to \cite{Keen1974, BenedP1992, Thurs1980, Rafi2007}. 
\begin{enumerate}[(1)]
\item Around any simple closed local geodesic $\alpha_\sigma$ there exists an equidistant convex collar. The distance from the boundary to the locally geodesic core curve is bounded from below by a function $r(l_{\sigma}(\alpha_\sigma))$ which is independent from the surface $(X, \sigma)$, decreasing and unbounded. 
 \item Let $\epsilon$ be the Margulis-constant which only depends on the topology of $X$. The \textit{thin part} $X_< \subset X$is the set of points with injectivity radius of at most $\epsilon$ which is a disjoint union of convex annuli. The diameter of each connected component of the \textit{thick part} $X_>:=X-X_<$ is bounded from above by a constant which only depends on the topology of $X$.
\item For $L:=\log(\epsilon^{-1})$ and for each component $Y$ of $X_>$ which is not a pair of pants there are two intersecting not freely-homotopic non peripheral simple closed curves in $Y$ whose lengths are bounded from above by $L$.
\end{enumerate}
For a closed flat surface $S=(X,d_\fl)$ and the hyperbolic metric $\sigma$, let $(X_>,X_<)$ be the thick-thin decomposition of $(X, \sigma)$ with respect to the Margulis constant and let $Y$ be a connected component of $X_>$.\\ 
 \cite{Rafi2007} showed that there is a unique subsurface $Y_\fl$ in the homotopy class of $ Y$ with the following properties:
\begin{itemize}
 \item $Y_\fl$ has locally geodesic boundary for the flat metric. For each boundary curve of $Y$ there is a unique flat geodesic representative in $Y_\fl$. $Y_\fl$ might be degenerated, so it might be a graph. 
\item Each free homotopy class of a simple closed curve $\alpha$ which can be homotoped into $Y$ contains a length minimizing geodesic representative for the flat metric in $Y_\fl$.
\end{itemize}
If $Y$ is not a topological pair of pants, then the constant $\lambda(Y)>0$ is the flat length of the shortest essential non-peripheral simple closed local geodesic on $Y_\fl$ and if $Y$ is a topological pair of pants, $\lambda(Y)>0$ is defined as the maximal flat length of boundary components.
\begin{Prop}\label{Prprafithinthick} 
For some $c>0$, which only depends on the topology of $X$ and for each non-peripheral simple closed curve $\alpha$ in $Y$ it follows:
 $$c^{-1}\lambda(Y) l_\sigma([\alpha])<l_\fl([\alpha])<c\lambda(Y) l_\sigma([\alpha])$$
Moreover, the diameter of $Y_\fl$ is comparable to $\lambda(Y)$.
$$c^{-1}\lambda(Y) \leq \diam(Y_\fl) \leq c\lambda(Y)$$ 
\end{Prop}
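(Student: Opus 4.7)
The strategy is to rescale the flat metric on $Y_\fl$ by a factor $1/\lambda(Y)$ and show that the resulting flat subsurface has geometry uniformly bounded in terms of the topology of $X$. Since the hyperbolic thick component $Y$ already has uniformly bounded diameter by standard fact (2) above, once both subsurfaces are shown to be of bounded geometry at their respective scales, the comparison of flat and hyperbolic lengths reduces to a quasi-isometry statement with uniform constants, which is then scaled back to prove the proposition.

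\emph{Diameter estimate.} For the lower bound I would use that in the non-pants case the simple closed geodesic realizing $\lambda(Y)$ lies in $Y_\fl$ and has flat length $\lambda(Y)$, so $\diam(Y_\fl)\geq \lambda(Y)/2$, and in the pants case a boundary geodesic of flat length $\lambda(Y)$ sits in $Y_\fl$ and gives the same bound. For the upper bound, consider the rescaled flat surface $(Y_\fl,\lambda(Y)^{-1}d_\fl)$. By construction its shortest non-peripheral essential simple closed geodesic has length exactly $1$ in the non-pants case, and its longest boundary component has length $1$ in the pants case. A compactness argument in the space of flat surfaces with locally geodesic boundary, with prescribed topology and the above normalization, then yields a uniform upper bound $c_0$ on the rescaled diameter, hence $\diam(Y_\fl)\leq c_0\lambda(Y)$.

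\emph{Length comparison.} For a non-peripheral simple closed $\alpha$ homotopic into $Y$, the geodesic representatives $\alpha_\sigma$ and $\alpha_\fl$ intersect the boundary of $Y$ (respectively $Y_\fl$) in the same number $N$ of points dictated by the homotopy class, and each decomposes into crossing arcs through the thick component and twisting arcs in the adjacent thin annuli. Each hyperbolic crossing arc has length of order $1$ (between a collar-lemma lower bound and $\diam(Y)$); each flat crossing arc has length of order $\lambda(Y)$ by the bounded geometry of the rescaled $Y_\fl$. Twisting inside a thin annulus $A$ around a short hyperbolic geodesic $\gamma$ contributes of order (twist number)$\cdot l_*([\gamma])$ in either metric, and applying the same rescaling argument to the flat cylinder homotopic to $A$ shows that $l_\fl([\gamma])/l_\sigma([\gamma])$ is again of order $\lambda(Y)$ when $\gamma$ borders $Y$. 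Summing over all contributions gives the asserted comparison with constants depending only on $\chi(X)$.

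\emph{Main obstacle.} The delicate point is the compactness argument producing the uniform upper bound on $\diam(Y_\fl,\lambda(Y)^{-1}d_\fl)$. One must exclude degenerations in which the rescaled subsurface develops an arbitrarily thick region while preserving the normalization of $\lambda(Y)$; the Gauss--Bonnet identity restricts the number and total cone-angle excess of the singularities inside $Y_\fl$, and the minimizing property in the definition of $\lambda(Y)$ rules out the emergence in the limit of a shorter non-peripheral simple closed geodesic, so that the two standard failure modes of the compactness statement are both forbidden.
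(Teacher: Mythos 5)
The paper itself offers no proof of this proposition; it is quoted verbatim from \cite{Rafi2007}. Your attempt to supply one founders exactly at the step you flag as the main obstacle: the compactness argument for the rescaled surface $(Y_\fl,\lambda(Y)^{-1}d_\fl)$. Normalizing the shortest non-peripheral essential simple closed geodesic to have length $1$, together with fixed topology and the Gauss--Bonnet constraint on cone angles, does \emph{not} bound the diameter. Concretely, take the flat torus $[0,1]\times[0,N]$ with opposite sides identified and slit it along a short horizontal segment: the resulting one-holed torus has locally geodesic boundary, its shortest essential non-peripheral closed geodesic (a horizontal circle) has length $1$, Gauss--Bonnet is satisfied with all the angle excess concentrated at the two slit endpoints, no shorter curve ``emerges'' in any limit, and yet the diameter is about $N/2$. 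So the two failure modes you propose to exclude are not the relevant ones; the degeneration that actually occurs is conformal thinness (the horizontal curve has extremal length of order $1/N\to 0$), which is invisible to flat length and to Gauss--Bonnet. Your lower bound is also stated too quickly (the image of a closed geodesic of length $L$ need not have diameter $L/2$), though that direction is easily repaired.

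What excludes the example above is the hypothesis --- unused in your diameter argument --- that $Y$ is a component of the \emph{hyperbolic} thick part: there the horizontal curve would be hyperbolically short, hence in the thin part, hence could not be non-peripheral in $Y$. Rafi's proof uses this input essentially: thickness of $Y$ provides two intersecting filling curves of bounded hyperbolic, hence bounded extremal, length (standard fact (3) in the paper), and the diameter bound and the bound $l_\fl(\partial Y_\fl)=O(\lambda(Y))$ are obtained by comparing extremal length with $l_\fl^2/\mathrm{area}$ and with moduli of expanding annuli and flat cylinders around $\partial Y_\fl$; no purely flat compactness statement suffices. Once bounded geometry of the rescaled $Y_\fl$ is actually established, your length comparison is essentially the standard deduction, except that for $\alpha$ non-peripheral and contained \emph{in} $Y$ there are no crossing or twisting arcs in adjacent thin annuli to account for ($\alpha$ is disjoint from $\partial Y$); both lengths should instead be compared to the intersection number of $\alpha$ with the bounded-length marking of $Y$. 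As written, the key analytic input is missing and the proof is incomplete.
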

\begin{proof}
\cite{Rafi2007}
\end{proof}
Denote by $F_{i,j}$ again the subadditive process as in the previous section and $F:=\lim\limits_{T\rightarrow \infty} T^{-1} F_{0,T}$ the measurable limit. 
\begin{Thm}\label{Thmentrovsrafi}
For a closed flat surface $S=(X,d_\fl)$ and for the hyperbolic metric $\sigma$ let $Y$ be a connected component of $X_>$.\\
Then there exists a constant $A>0$ which only depends on the topology of $X$ such that
$$ F\geq A\cdot\lambda(Y).$$
\end{Thm}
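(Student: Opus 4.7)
The plan is to combine ergodicity of the hyperbolic geodesic flow with a crossing-counting argument based on the short simple closed curves provided by fact (3) of the preceding list. By that fact, fix a non-peripheral simple closed curve $\alpha\subset Y$ with $l_\sigma([\alpha])\leq L$; since $Y$ is a component of $X_>$ we also have $l_\sigma([\alpha])\geq \epsilon$. Proposition \ref{Prprafithinthick} then gives $l_\fl([\alpha])\asymp \lambda(Y)$ with comparison constants depending only on the topology of $X$. My goal is to show that the flat geodesic in $[c_{0,T}]$ must cross $\alpha_\fl$ linearly many times, each crossing contributing a definite flat length proportional to $\lambda(Y)$.

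First I would exploit the ergodicity of $g_t$ with respect to $\mathfrak{m}$ to show that for $\mathfrak{m}$-a.e.\ $v\in T^1X$ the geometric intersection number $i([c_{0,T}],[\alpha])$ grows linearly in $T$ at a rate bounded below by a topological constant. This is a standard Birkhoff argument applied to a characteristic function supported in a thin tubular neighborhood of the unit tangent lift of $\alpha_\sigma$ in $T^1X$ (divided by the average transit time), yielding
\[
\lim_{T\to\infty}\frac{i([c_{0,T}],[\alpha])}{T}=\kappa
\]
with $\kappa$ proportional to $l_\sigma([\alpha])/\ell_\sigma(X)\geq \kappa_0>0$, where $\kappa_0$ depends only on topology (using $l_\sigma([\alpha])\geq \epsilon$ and $\ell_\sigma(X)=2\pi|\chi(X)|$).

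Next I would translate to the flat side. Both $c_{0,T}$ and $\alpha_\sigma$ are hyperbolic geodesics, hence realize the geometric intersection number, so by Lemma \ref{Lemintnumbergeod} the flat geodesic in $[c_{0,T}]$ and $\alpha_\fl$ either intersect at $\geq i([c_{0,T}],[\alpha])$ transverse points or share subarcs (shared subarcs of $\alpha_\fl$ already contribute at least their own length to $l_\fl([c_{0,T}])$, which is compatible with the estimate below). Lifting to the universal cover, the translates of $\alpha_\fl$ under deck transformations are pairwise non-crossing $L$-quasi-geodesic lines (since $\alpha$ is simple), and $[\tilde{c}_{0,T}]_\fl$ crosses at least $i([c_{0,T}],[\alpha])$ such distinct translates.

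The main obstacle I anticipate is establishing a flat-metric collar lemma: that any two distinct lifts of $\alpha_\fl$ have flat distance at least $c_1\lambda(Y)$, with $c_1$ depending only on topology. I expect this from Rafi's analysis of the maximal flat annulus around $\alpha_\fl$, whose width is comparable to $\lambda(Y)$; alternatively, it can be shown directly by applying the Gauss--Bonnet formula to a minimal flat subsurface bounded by pieces of two lifts together with orthogonal connecting segments, using the absence of flat bigons. Granted this lemma, consecutive crossings of $[\tilde{c}_{0,T}]_\fl$ with lifts of $\alpha_\fl$ lie at flat distance at least $c_1\lambda(Y)$, so
\[
F_{0,T}(v)=l_\fl([c_{0,T}])\geq c_1\lambda(Y)\cdot i([c_{0,T}],[\alpha]).
\]
Dividing by $T$ and letting $T\to\infty$ gives $F\geq c_1\kappa_0\lambda(Y)=:A\lambda(Y)$ with $A$ depending only on topology, as required.
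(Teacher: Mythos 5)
Your ergodic step is fine (the Birkhoff argument gives a linear lower bound on $i([c_{0,T}],[\alpha])$ with rate controlled by $l_\sigma(\alpha_\sigma)\geq\epsilon$, hence by topology alone), but the argument then hinges entirely on the ``flat collar lemma'' you flag as the anticipated obstacle, and that lemma is not just unproven -- it is false with a constant depending only on the topology. Flat metrics admit no collar lemma: the maximal flat cylinder around $\alpha_\fl$ may be degenerate (e.g.\ when $\alpha_\fl$ runs through singularities), and Rafi's analysis controls moduli of annuli and the diameter of $Y_\fl$, not a width comparable to $\lambda(Y)$. Concretely, if $\gamma$ is any essential closed curve with $i(\gamma,\alpha)\neq 0$ that is not homotopic into $Y$ (so its flat length is not constrained by $\lambda(Y)$), and $p\in\tilde{\alpha}\cap\tilde{\gamma}$ is a lift of an intersection point, then the deck transformation $\gamma_*$ moves $p$ by at most $l_\fl(\gamma_\fl)$ while carrying $\tilde{\alpha}$ to a distinct lift; hence two distinct lifts of $\alpha_\fl$ come within $l_\fl(\gamma_\fl)$ of each other, which can be made arbitrarily small compared to $\lambda(Y)$ as the flat structure varies. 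A Gauss--Bonnet argument only rules out bigons and gives no quantitative separation. So the inequality $l_\fl([c_{0,T}])\geq c_1\lambda(Y)\, i([c_{0,T}],[\alpha])$ does not follow, and with it the whole lower bound collapses.

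This is precisely the difficulty the paper's proof is built to circumvent: instead of charging a definite flat length to each single crossing of $\alpha$, it fixes a second intersecting curve $\beta$ in $Y$, constructs a direction collar $\mathit{CD}_\alpha$ so that each visit forces the hyperbolic geodesic to twist and cross $m$ consecutive lifts of $\beta$, synchronizes the flat and hyperbolic geodesics via the bounded Hausdorff distance of $\tilde{\beta}_{\sigma,i}$ and $\tilde{\beta}_{\fl,i}$, and closes the corresponding flat subarc up to a curve in the class $[\alpha^{m'}\beta^{l}]$. The lower bound then comes from $l_\fl([\alpha^{m'}])= m'\,l_\fl(\alpha_\fl)$ with $m$ chosen large in terms of $l_\fl(\beta_\fl)/\lambda(Y)$ (which is bounded by topology), yielding a definite length $\geq c^{-1}\lambda(Y)$ per block of $m$ twists rather than per crossing. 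If you want to salvage your approach you would need some replacement for the per-crossing bound of this kind; as written there is a genuine gap. (You also omit the case where $Y$ is a pair of pants, which the paper treats separately via a $4$-sheeted cover, but that is secondary.)
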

\begin{proof}
Recall that $\tau: T^1X \to X$ is the projection of the unit tangent bundle, whereas $\pi:\tilde{X} \to X$ is the projection of the universal cover.\\
Assume first that the connected component $Y\subset X_>$ is not a topological pair of pants. There exist non-peripheral intersecting simple closed local geodesics $\alpha_\sigma, \beta_\sigma\subset Y$ whose hyperbolic lengths are bounded from above and below by some uniform constant. Denote by $\alpha_\fl$ resp. $\beta_\fl$ the corresponding length minimizing representatives in the free homotopy class and define $m:=\frac{2c\cdot l_\fl(\beta_\fl)}{\lambda(Y)}+4$ which is bounded by constants which only depend on the topology of $X$.\\
There is a hyperbolic convex collar $C_{\alpha}$ around $\alpha_\sigma$ with the following properties:
\begin{itemize}
\item The hyperbolic length of the shortest arc in $C_\alpha$ which connects different boundary components has a universal positive upper and lower bound which only depend on the length of $\alpha_\sigma$.
\item The hyperbolic area of $C_\alpha$ is bounded from below by some positive constant which only depends on the topology of $X$ as well.
 \item There exists some $s_0>0$, which depends on $m$ and on the hyperbolic length of $\alpha_\sigma$, so that each hyperbolic geodesic $g_\sigma:[0,s_0] \to C_\alpha$, of length at least $s_0$ intersects $\beta_\sigma$ at least $m$ times. 
\end{itemize}

The \textit{direction collar} $\mathit{CD}_{\alpha}\subset T^1 C_\alpha$ is defined as follows: \\
At each point $x \in C_\alpha$ choose a maximal set of directions $I_x\subset T^1_xX$ so that for each $v\in I_x$ the hyperbolic geodesic $g_\sigma:[0,s_0] \to X,g_\sigma:t\mapsto \tau(g_t(v))$ is entirely contained in $C_\alpha$, see Figure \ref{fig:rafi}.
\begin{figure}[htbp]
 \centering
 \fbox{
 %\psfrag{x}{$x$}
\includegraphics[width=0.6\textwidth]{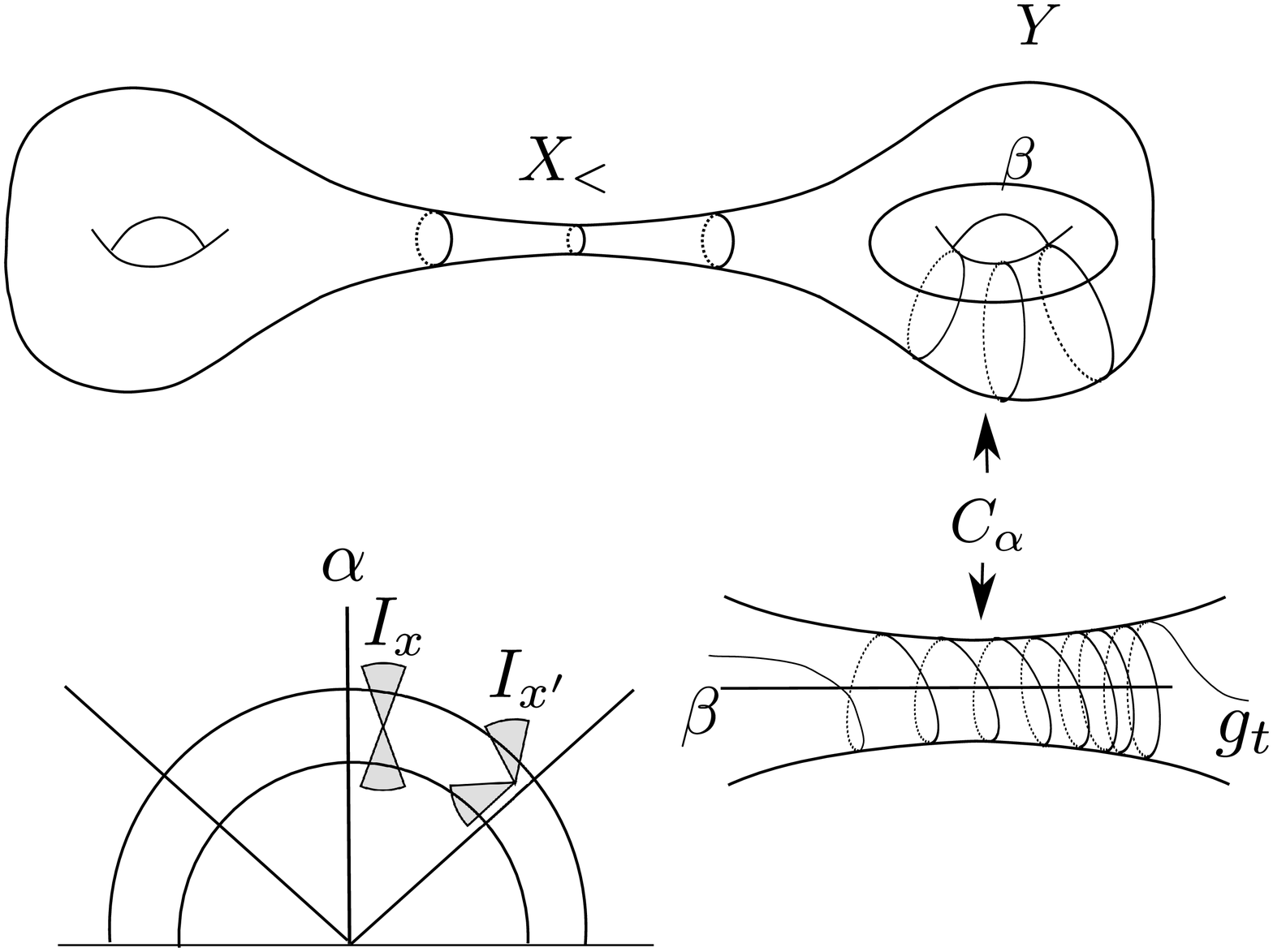}
 }
 \caption{One sees the configuration of $\alpha$ and $\beta$ on the surface. Uniform $C_\alpha$ such that the identification map is $z\mapsto az,a>0$. At each point in $C_\alpha$ choose two intervals of directions $I_x$, such that the geodesic flow in this direction twists a fixed amount of times in the collar $C_\alpha$.}
 \label{fig:rafi}
\end{figure}
The Lebesgue measure of each fiber $I_x$ has a positive lower bound which depends on $s_0$ and the length of $\alpha_\sigma$, consequently on the topology of $X$. So the volume of $\mathit{CD}_\alpha$ has a positive lower bound which only depends on the topology of $X$.\\ 
By the following Lemma one can use $\mathit{CD}_\alpha$ to estimate the quotient of hyperbolic and flat lengths of geodesic arcs. 
\pagebreak
\begin{Lem}\label{Lemcyldirmakelength}
Let $s_0$ be as above. For a geodesic arc for the hyperbolic metric $g_\sigma: [0,T]\to X$ let $t_j>0, j=1,\ldots, n_{\mathit{CD}} $ which satisfy the following properties:
\begin{enumerate}
\item $g_\sigma'(t_j) \in \mathit{CD}_\alpha$
 \item $t_{j}+s_0 < t_{j+1}< T-2s_0$ 
\end{enumerate}
Then there is a constant $c>0$, which only depends on the topology of $X$ and $k>0$, which depends on the flat metric but not on $g_\sigma$ so that the length of flat geodesic representative $g_\fl\in [g_\sigma] $ can be estimated by
$$l_\fl(g_\fl) \geq c^{-1} \lambda(Y)(n_{\mathit{CD}}-2k).$$
\end{Lem}
\begin{proof}
For $n_{\mathit{CD}}\leq 2k$ this is obvious, so assume that $n_{CD}>2k$.\\
The main line of the argument is the following: The geodesic subarc $\left.\tilde{g}_\sigma\right|_{[t_i,t_i+s_0]}$ twists in the collar  $C_\alpha$ for each $t_i$. One expects that there is a corresponding subarc of $g_\fl$ with the same behavior. The length of this flat subarc is comparable to the length of $\alpha_\fl$ multiplied with the number of twists. The main difficulty is to synchronize the behavior of the flat and the hyperbolic geodesic $g_*, *=d_\fl,\sigma$.\\
Choose lifts $\tilde{g}_*$ of $g_*,*=_\fl,\sigma$ on the universal cover with common endpoints $\tilde{x},\tilde{y}$and let $\tilde{\beta}_{\sigma,i},i=1,\ldots, n_{int}$ be the complete lifts of $\beta_{\sigma}$ which intersect $\tilde{g}_\sigma$ and which are ordered by their distance to $\tilde{x}$. By the Collar Lemma the geodesic lines $\tilde{\beta}_{\sigma,i}, \tilde{\beta}_{\sigma,i+1}$ are disjoint and their distance is bounded from below by some uniform positive constant.
Let $\tilde{\beta}_{\fl,i}$ be the complete lifts of $\beta_\fl$ which share their endpoints at infinity with $\tilde{\beta}_{\sigma,i}$. $\tilde{\beta}_{\fl,i}$ is a $L$-quasi-geodesic in the Poincar\'{e} disc and so of bounded Hausdorff distance $H(L)$ to $\tilde{\beta}_{\sigma,i}$. 
Whereas each $\tilde{\beta}_{\sigma,i}$ separates the endpoints $\tilde{x}$ and $\tilde{y}$, the lines $\tilde{\beta}_{\fl,i}$ might not separate. But, since the lines $\tilde{\beta}_{\sigma,i}$ are of fixed distance and the Hausdorff distance of $\tilde{\beta}_{\sigma,i}$ and $\tilde{\beta}_{\fl,i}$ is at most $H(L)$, there is a bound $k(L)>0$ such that $\tilde{\beta}_{\sigma,i}$ separates $\tilde{x}$ and $\tilde{y}$ for all $k(L)\leq i\leq n_{CD}-k(L)$. \\
The geodesic lines $\tilde{\beta}_{\fl,i}, \tilde{\beta}_{\sigma,i}$ admit a coarse synchronization of the geodesics $\tilde{g}_\fl, \tilde{g}_\sigma$ in the following way:
To each subarc $\tilde{b}_{\sigma} \subset \tilde{g}_\sigma$ that connects two lines $\tilde{\beta}_{\sigma,i},\tilde{\beta}_{\sigma,j},~ k<i,j<n_{int}-k$ we associate the shortest subarc of $\tilde{b}_{\fl} \subset\tilde{g}_\fl$ that connects two lines $\tilde{\beta}_{\fl,i},\tilde{\beta}_{\fl,j}$. After projecting the subarcs to the base surface they can be closed up with a piece of $\beta_\sigma$, resp. $\beta_\fl$ to closed curves which are in same free homotopy classes up to attaching multiples of $\beta$. \\ 
For $k\leq j \leq n_{Cd}-k$ there is a subarc $\tilde{b}_{j, \sigma} \subset\left.\tilde{g}_\sigma\right|_{[t_j,t_j+s_0]}$ that intersects $m$ lifts $\tilde{\beta}_{\sigma,i_j}\ldots , \tilde{\beta}_{\sigma,i_j+m}$.For different $j$ the associated subarcs $\tilde{b}_{j,\fl}$ are disjoint up to endpoints and therefore
$$l_\fl(\tilde{g}_{\fl})\geq \sum_{j=k}^{n_{CD}-k} l_\fl(\tilde{b}_{j,\fl}). $$
It remains to show that each arc $\tilde{b}_{j,\fl}$ is of length at least $c^{-1}\lambda(Y)$.
 Since $\pi(\tilde{b}_{j,\sigma})$ twists in the collar $C_\alpha$, one closes $\pi(\tilde{b}_{j,\fl})$ up along a piece of $\beta_\fl$ to a closed curve in the free homotopy class $\left[\alpha^{m'}\beta^{l}\right], |m'-(m-2)| \leq 1 $ which is not necessarily simple.
Let $\overline{b_{j,\fl}}$ be the flat geodesic representative of the free homotopy class $\left[\alpha^{m'}\beta^{l}\right]$. Remove a subarc of $\overline{b_{,\fl}}$ so that the resulting arc is a loop in the free homotopy class $\left[\alpha^{m'}\right]$ or $\left[\alpha^{m'} \beta\right]$.
Since $l_\fl(\overline{b_{j,\fl}}) \geq l_\fl( [\alpha^{m'}])-l_\fl( [\beta])=m'l_\fl(\alpha_\fl)-l_\fl( \beta_\fl) $
 one concludes that
$$l_\fl\left(\overline{b_{j,\fl}} \right) \geq (m-3) l_\fl(\alpha_\fl)-l_\fl(\beta_\fl).$$
By Proposition \ref{Prprafithinthick} there is a uniform constant $c>0$ so that
$$l_\fl(b_{j,\fl})\geq l_\fl\left(\overline{b_{j,\fl}}\right)-l_\fl(\beta_{\fl}) \geq c^{-1}\lambda(Y)(m-3)-2l_\fl(\beta_\fl)\geq  c^{-1}\lambda(Y).$$
 \end{proof}
We return to the proof of Theorem \ref{Thmentrovsrafi}. For a $\mathfrak{m}$-typical point $v \in T^1X$ there is some bound $T_0>0$ so that for all $T>T_0$ the geodesic $g_t(v), 0<t<T$ spends at least some proportional amount of time $rT$ in $CD_\alpha$where the proportion $r$ depends only on the area of $CD_\alpha$ hence on the topology of $X$. There are at least $\frac{rT}{s_0}$ times $t_j$ so that $g_{t_j}(v)\in CD_\alpha, 0<t_j<t_{j+1}-s_0<T-2s_0$. \\
By Lemma \ref{Lemcyldirmakelength} 
$$T^{-1}F_{0,T}(v)\geq T^{-1} c^{-1}\lambda(Y)\left( \frac{rT}{ s_0}-2k\right)= c^{-1}\lambda(Y)\left( \frac{r}{s_0}-\frac{2k}{T}\right). $$
where $c,s_0,r>0$ only depend on the topology of $X$ and $k>0$ is independent of $T$.\\
It remains to show the Theorem in the case that $Y$ is a pair of pants. Let $\alpha_\sigma, \beta_\sigma$ be the intersecting hyperbolic geodesics as in Figure \ref{figpant} whose lengths are bounded from above by some uniform constant.
\begin{figure}[htbp]
 \centering
 \fbox{
 %\psfrag{x}{$x$}
 \includegraphics[width=0.3\textwidth]{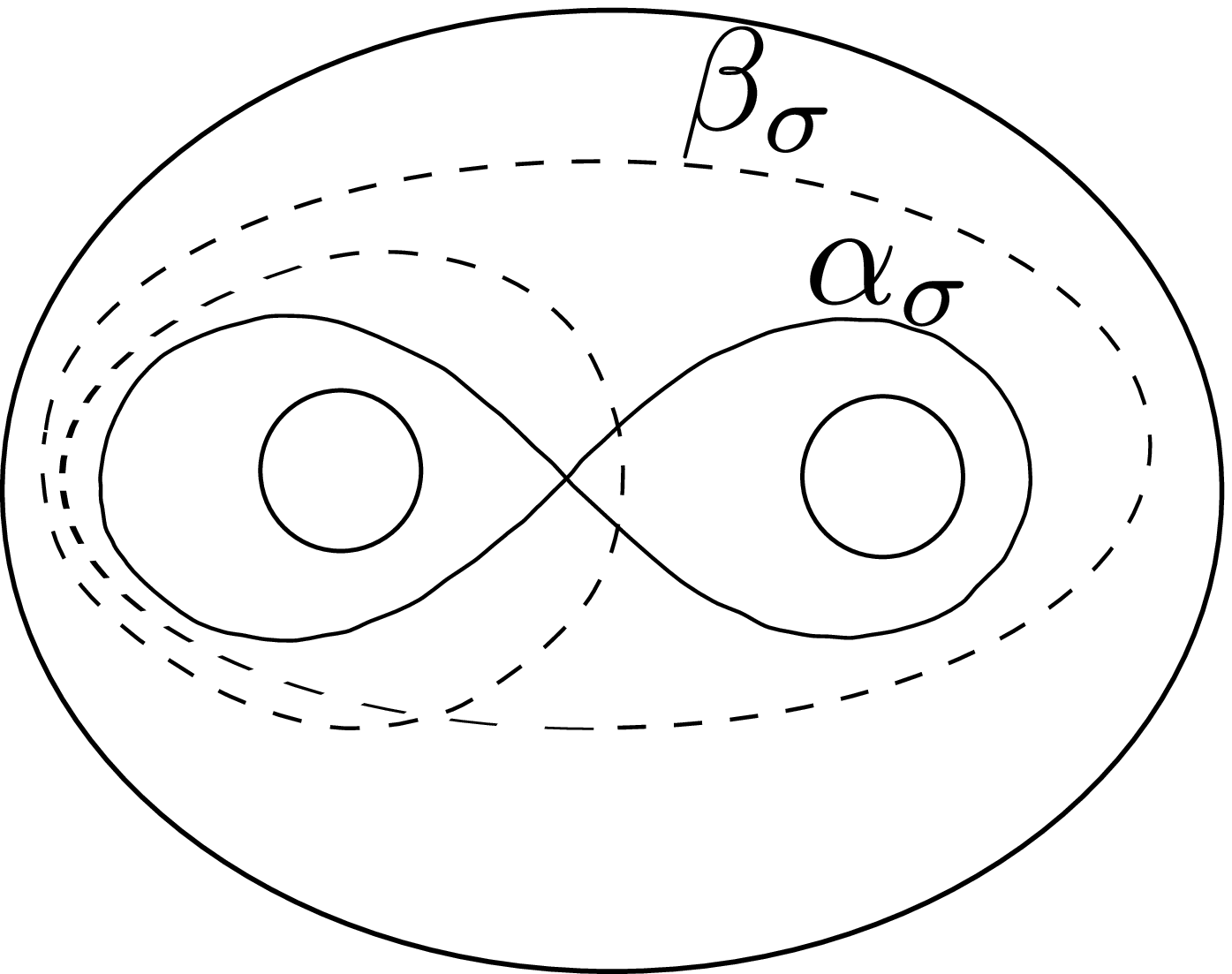}
 }
 \caption{The geodesics $\alpha_\sigma,\beta_\sigma$ on a pair of pants which is homeomorphic to a disc with two holes.}
 \label{figpant}
\end{figure}
Slightly homotope the flat geodesic representative $\alpha_\fl\in [\alpha_\sigma]$ such that it has exactly one point of self-intersection and the length only changes by some small factor and decompose the homotoped curves in two simple closed curves $\alpha_1$,$\alpha_2$ which are both in the homotopy class of boundary components. The flat length of the geodesic representative $\alpha_{1,q}, \alpha_{2,q}$ is at most $\lambda(Y)$ and by Theorem \ref{Prprafithinthick} the diameter of $Y_\fl$, with respect to the flat metric, is less than $c\lambda(Y)$ for some constant uniform constant $c$, so $l_\fl(\alpha_\fl)\leq 2\lambda(Y)(1+c)$.\\
 Analogously the flat length of the geodesic representative $\beta_\fl$ is uniformly bounded. Choose the $4$-sheeted cover $\pi:X' \to X$ of $X$, so that up to passing through $\alpha_\sigma, \beta_\sigma$ twice, the lifts $\alpha_\sigma',\beta_\sigma'$ are simple closed and intersecting. Lift the metrics to $X'$. 
 The remaining part of the argument is analogous to the one in the first case:\\
 Since the length of $\alpha_\sigma'$ is twice the length $\alpha_\sigma$, there exists some hyperbolic collar $C_{\alpha}\subset X'$ around $\alpha_\sigma'$ which has some definite width. As in the first case choose the direction collar $CD_{\alpha'}$ so that for each $v \in CD_{\alpha '}$ the flow $g_t(v),0<t<s_0$ remains in $C_{\alpha'}$ and intersects $\beta'_\sigma$ a certain number of times $m$ and that the volume of $CD_{\alpha'}$ has some definite amount. The projection of the direction collar to the unit tangent bundle of the base surface $T^1X$ has hyperbolic measure at least one fourth the hyperbolic measure of $CD_{\alpha '}$.\\
 For $v \in\pi(CD_{\alpha'})\subset T^1X$, choose a lift of the flow $\tau(g_t(v)),0<t<s_0 $ to $T^1Y'$. The lifted geodesic winds through the collar $C_{\alpha'}$ and can be closed up with a piece of $\beta'_\sigma$ to a closed curve which is freely homotopic to $\alpha'^{m'}\beta'^{l'}$. The situation on the base surface is equal. For $v \in \pi(CD_{\alpha'})$ the geodesic flow $\tau(g_t(v)),0<t<t_0$ remains in $\pi(C_{\alpha'})$ and intersects $\beta_\sigma$ at least $m$ times. One closes up the piece with $\beta_\sigma$ and obtains a closed curve which is freely homotopic to $ \alpha^{m'}\beta^l,l\in \N, |m'-(m-2)|<1 $.\\ 
 The remaining part of the argument is analogous to the one in the first case.
\end{proof}

\section{Typical boundary points}\label{sectasymprays}
In this section, we investigate the isometric universal cover $\pi: \tilde{S} \to S$ of a closed flat surface $S$ with $\Gamma$ the group of deck transformations.\\
To study geodesic rays in $ \tilde{S}$, recall that $\nu_{\tilde{x}}, \tilde{x} \in \tilde{S}$ is the Patterson-Sullivan measure on the Gromov boundary $\partial \tilde{S}$. For a point $\tilde{x}\in \tilde{S}$ and $U\subset \tilde{S}$ the boundary shadow $\partial sh_{\tilde{x}}(U) \subset \partial\tilde{S} $ was defined in Section \ref{sectgrhyp}. 
The following statements are needed.
\begin{Lem}\label{LemShadSing}
Let $\tilde{\varsigma}\in \tilde{S}-\tilde{x}$ be a singularity. Then the shadow $\partial sh_{\tilde{x}}(\tilde{\varsigma})$ contains an open subset of the boundary. 
\end{Lem}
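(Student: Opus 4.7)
My plan is to produce an explicit one-parameter family of geodesic rays from $\tilde{x}$ that pass through $\tilde{\varsigma}$ and land at distinct boundary points, and then to argue that this family sweeps out an arc on $\partial\tilde{S}$. The existence of such a family is made possible precisely by the fact that singularities have cone angle $k\pi$ with $k\geq 3$, so that a local geodesic arriving at $\tilde{\varsigma}$ admits a one-parameter family of geodesic continuations.

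More concretely, let $c:[0,d(\tilde{x},\tilde{\varsigma})]\to\tilde{S}$ be the unique $\mathrm{CAT}(0)$ geodesic from $\tilde{x}$ to $\tilde{\varsigma}$, and let $\theta_0\in T^1_{\tilde\varsigma}\tilde{S}$ be the direction in which it enters $\tilde\varsigma$. An outgoing direction $\theta$ at $\tilde\varsigma$ produces a local geodesic extension of $c$ if and only if the flat angle between $\theta$ and the reverse of $\theta_0$ is at least $\pi$ measured from either side. Since the total cone angle at $\tilde\varsigma$ equals $k\pi\geq 3\pi$, the set $\Theta$ of admissible outgoing directions is an open arc of angular width $(k-2)\pi\geq \pi$. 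For each $\theta\in\Theta$ let $r_\theta:[0,\infty)\to\tilde S$ be the straight ray in direction $\theta$ emanating from $\tilde\varsigma$; by construction the concatenation $c\ast r_\theta$ is a local geodesic and, since $\tilde S$ is $\mathrm{CAT}(0)$, it is a global geodesic ray converging to a point $\eta(\theta)\in\partial\tilde S$. Every initial segment of this ray passes through $\tilde\varsigma$, so $\eta(\theta)\in\partial sh_{\tilde x}(\tilde\varsigma)$.

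It remains to argue that $\{\eta(\theta):\theta\in\Theta\}$ contains an open subset of $\partial\tilde S$. First, the map $\theta\mapsto\eta(\theta)$ is continuous: rays $r_{\theta_n}\to r_\theta$ uniformly on compact sets, and in the $\mathrm{CAT}(0)$ space $\tilde S$ this forces convergence of their endpoints in the visual topology, which agrees with the Gromov boundary topology. Second, the map is injective: two distinct rays from $\tilde\varsigma$ in a $\mathrm{CAT}(0)$ space diverge linearly (the distance function $t\mapsto d(r_\theta(t),r_{\theta'}(t))$ is convex and nonzero), and quasi-isometry with the Poincar\'e disc extends this to the fact that their boundary endpoints are different. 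Since the Gromov boundary $\partial\tilde S$ is a topological circle (because $\tilde S$ is quasi-isometric to $\UH$), restricting the continuous injection $\eta:\Theta\to\partial\tilde S$ to any compact subinterval gives a homeomorphism onto its image, a nondegenerate arc in $\partial\tilde S$ whose interior is the desired open subset of $\partial sh_{\tilde x}(\tilde\varsigma)$.

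The main obstacle I anticipate is the injectivity/divergence step: one has to be sure that different admissible outgoing directions at a singular point really produce boundary points that are distinct, rather than, say, asymptotic rays. In a smooth $\mathrm{CAT}(0)$ manifold this would be immediate from strict convexity of the distance function; here the flat metric is only $\mathrm{CAT}(0)$, but the convexity of $t\mapsto d(r_\theta(t),r_{\theta'}(t))$ together with the fact that the two rays immediately separate at $\tilde\varsigma$ (they agree only at the starting point) still forces linear divergence, and hence distinct boundary limits.
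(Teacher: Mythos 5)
Your overall strategy --- produce a one-parameter family of geodesic rays through $\tilde{\varsigma}$ indexed by the interval $\Theta$ of admissible outgoing directions, and use that a continuous injection of an interval into the circle $\partial \tilde{S}$ has image with nonempty interior --- is reasonable, and both your computation of $\Theta$ (width $(k-2)\pi\geq\pi$) and your injectivity step (convexity of $t\mapsto d(r_\theta(t),r_{\theta'}(t))$ forces linear divergence, hence distinct Gromov boundary points) are fine. The gap is in the very first step of the construction: the ``straight ray $r_\theta:[0,\infty)\to\tilde{S}$ in direction $\theta$'' does not exist for every $\theta\in\Theta$. A straight segment issuing from $\tilde{\varsigma}$ is defined only until it meets another singularity, where it has no canonical straight continuation, and by Lemma \ref{lemscdense} the set of directions at $\tilde{\varsigma}$ blocked in this way by some singularity of $\tilde{S}$ is \emph{dense} in the circle of directions. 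So your family $\{r_\theta\}$ is defined only on a co-countable, nowhere-open subset of $\Theta$. Choosing arbitrary geodesic continuations past the blocking singularities restores well-definedness but destroys continuity of $\theta\mapsto\eta(\theta)$; and an injective (even monotone) map defined merely on a dense subset of an interval can perfectly well have a Cantor set as image, so the final step ``continuous injection of an interval, hence open image'' no longer applies.

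The paper's proof sidesteps this entirely by staying inside $\tilde{S}$: choose a small open sector $\tilde{U}$ in a standard cone neighborhood of $\tilde{\varsigma}$, lying behind $\tilde{\varsigma}$ in the admissible directions and containing no further singularities, so that for every $\tilde{y}\in\tilde{U}$ the unique $\Cat(0)$ geodesic $[\tilde{x},\tilde{y}]$ passes through $\tilde{\varsigma}$; then $\partial sh_{\tilde{x}}(\tilde{\varsigma})\supset \partial sh_{\tilde{x}}(\tilde{U})$, and the latter is open (and nonempty, by extending one such geodesic to a ray) because shadows of open sets are by definition basic open sets of the topology on $\overline{\tilde{S}}$. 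If you want to keep your direction-parametrized picture, the repair is essentially the same: replace the infinite rays $r_\theta$ by finite straight segments of some fixed small length $\epsilon$ in direction $\theta$, whose endpoints sweep out exactly such an open set $\tilde{U}$, and then invoke openness of boundary shadows of open sets rather than continuity of an endpoint map at infinity.
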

 \begin{proof}
 Let $[\tilde{x}, \tilde{\varsigma}]$ be the geodesic connecting $\tilde{x}$ with $\tilde{\varsigma}$. As the cone angle at $\tilde{\varsigma}$ is at least $3\pi$, in a standard neighborhood of $\tilde{\varsigma}$ one finds a small open subset $\tilde{U}$ so that for any $\tilde{y} \in \tilde{U}$ the geodesic which connects $\tilde{x}$ to $\tilde{y}$ passes through $\tilde{\varsigma}$. Observe that $\partial sh_{\tilde{x}}(\tilde{\varsigma})\supset \partial sh_{\tilde{x}}(\tilde{U})$.
\end{proof}

\begin{Prop}\label{Propinfsing}
For $\tilde{x} \in \tilde{S}$ let $B_{\tilde{x}} \subset \partial \tilde{S}$ be the set of points $\eta \in \partial \tilde{S}$ such that $[\tilde{x}, \eta]$ passes through finitely many singularities. Then $\nu_{\tilde{x}}(B_{\tilde{x}})=0$. 
 \end{Prop}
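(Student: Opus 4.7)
The plan is to reduce the claim to showing that, for every $\tilde{y}\in \tilde{S}$, the set
\[A_{\tilde{y}}:=\{\eta \in \partial \tilde{S} \mid \text{the ray } [\tilde{y},\eta] \text{ is singularity-free}\}\]
has $\nu_{\tilde{y}}$-measure zero. If $\eta \in B_{\tilde{x}}$ and $\tilde{\varsigma}(\eta)$ denotes the last singularity on $[\tilde{x},\eta]$ (or $\tilde{x}$ itself if there is none), then $[\tilde{\varsigma}(\eta),\eta]$ is singularity-free, so $\eta \in A_{\tilde{\varsigma}(\eta)}$. Since $\Gamma$ is countable and $\Sigma$ is finite, the set $\pi^{-1}(\Sigma)\subset \tilde{S}$ is countable, giving a countable cover
\[B_{\tilde{x}}\subset A_{\tilde{x}}\cup \bigcup_{\tilde{\varsigma}\in \pi^{-1}(\Sigma)} A_{\tilde{\varsigma}}.\]
By Corollary \ref{Corestrad}, the measures $\nu_{\tilde{x}}$ and $\nu_{\tilde{y}}$ are mutually absolutely continuous, so it will suffice to show $\nu_{\tilde{y}}(A_{\tilde{y}})=0$ for every $\tilde{y}\in \tilde{S}$.

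I will prove this by contradiction, assuming $\nu_{\tilde{y}}(A_{\tilde{y}})>0$. By Theorem \ref{thmhdimentro}, $\nu_{\tilde{y}}$ is a constant multiple of the Hausdorff measure of the Gromov metric $d_{\infty,\tilde{y}}$, hence Ahlfors regular and in particular doubling; the Lebesgue differentiation theorem then yields a density point $\eta \in A_{\tilde{y}}$ at which
\[\lim_{r\to 0^+}\frac{\nu_{\tilde{y}}(A_{\tilde{y}}\cap B(\eta,r))}{\nu_{\tilde{y}}(B(\eta,r))}=1.\]
The contradiction will come from exhibiting, at arbitrarily small scales around $\eta$, a uniform positive fraction of $\nu_{\tilde{y}}$-mass sitting in $A_{\tilde{y}}^c$.

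For each large $t>0$, let $\tilde{z}_t$ be the point on $[\tilde{y},\eta]$ at distance $t$ from $\tilde{y}$, and choose $\rho:=\diam(\tilde{S}/\Gamma)+1$ so that $B_{\tilde{z}_t}(\rho)$ always contains a singularity $\tilde{\varsigma}_t$; set $U_t:=\partial sh_{\tilde{y}}(B_{\tilde{z}_t}(\rho))$. Using Lemma \ref{lemgrmetric} together with the standard shadow lemma in $\delta$-hyperbolic geometry, $U_t$ is comparable to a $d_{\infty,\tilde{y}}$-ball of radius $\xi^{-t}$ around $\eta$, and $\nu_{\tilde{y}}(U_t)\leq C\exp(-e(S)t)$. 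By Lemma \ref{LemShadSing}, $\partial sh_{\tilde{y}}(\tilde{\varsigma}_t)$ contains an open subset of $\partial\tilde{S}$, and the key estimate I will need is a uniform lower bound
\[\nu_{\tilde{y}}(\partial sh_{\tilde{y}}(\tilde{\varsigma}_t))\geq c\exp(-e(S)d(\tilde{y},\tilde{\varsigma}_t))\]
with $c>0$ independent of $\tilde{y}$ and $\tilde{\varsigma}_t$. Granting this, since $\tilde{\varsigma}_t\in B_{\tilde{z}_t}(\rho)$ implies $\partial sh_{\tilde{y}}(\tilde{\varsigma}_t)\subset U_t$, and geodesics through the singularity $\tilde{\varsigma}_t$ lie in $A_{\tilde{y}}^c$, I obtain
\[\frac{\nu_{\tilde{y}}(A_{\tilde{y}}^c\cap U_t)}{\nu_{\tilde{y}}(U_t)}\geq (c/C)\exp(-e(S)\rho)=:\epsilon_0>0\]
for all $t$, contradicting the density-one property at $\eta$.

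The main obstacle will be the uniform shadow lower bound for singularities. Qualitatively it is given by Lemma \ref{LemShadSing}, but the work lies in trading the open set produced there for a numerical estimate depending only on $S$. This should follow by combining the $\Gamma$-equivariance of the Patterson--Sullivan construction (Corollary \ref{Corestrad}) with cocompactness: move $\tilde{y}$ to a fundamental domain by a deck transformation, use that the diameter of the orbit of $\tilde{\varsigma}$ relevant to the shadow from $\tilde{y}$ is controlled by $d(\tilde{y},\tilde{\varsigma})$, and extract a uniform constant from the continuous dependence of the open shadow on the base point on a compact set.
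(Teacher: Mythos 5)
Your overall strategy is sound but genuinely different from the paper's. The paper also reduces to the singularity-free sets $B_{\tilde{y},0}$ over the countably many singularities, but then shows directly that $B_{\tilde{x},0}$ has Hausdorff dimension zero: it covers $B_{\tilde{x},0}$ by the angular sectors between consecutive singularities that are visible from $\tilde{x}$ within distance $L$, uses Masur's quadratic count of such singularities together with the fact that a singularity-free triangle is Euclidean to show that polynomially many sectors of diameter $\xi^{-cL}$ suffice, and concludes since $\dim_H\partial\tilde{S}>0$ and $\nu_{\tilde{x}}$ is the Hausdorff measure in that dimension. Your route replaces this by a porosity/density-point argument: at every scale along a singularity-free ray, the shadow of a nearby singularity occupies a definite fraction of the corresponding boundary ball. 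This avoids Masur's counting result and the Gauss--Bonnet step, at the price of invoking Ahlfors regularity of $\nu_{\tilde{y}}$ (the Sullivan shadow lemma in both directions, which is in Coornaert but not stated in this paper) and the Lebesgue differentiation theorem, plus the uniform lower bound on shadows of singularities. The paper's argument yields the stronger conclusion $\dim_H B_{\tilde{x},0}=0$ and needs none of that machinery at this point.

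The one genuine gap is the estimate you yourself flag: $\nu_{\tilde{y}}(\partial sh_{\tilde{y}}(\tilde{\varsigma}))\geq c\exp(-e(S)d(\tilde{y},\tilde{\varsigma}))$ uniformly in $\tilde{y}$ and $\tilde{\varsigma}$. This is true --- it is exactly Lemma \ref{Lemmeasshad} of the paper, proved later and independently of the present proposition, so citing it would close the gap without circularity --- but your proposed derivation does not work as sketched. Moving $\tilde{y}$ into a fundamental domain does not confine the configuration $(\tilde{y},\tilde{\varsigma})$ to a compact set, since $d(\tilde{y},\tilde{\varsigma})$ is unbounded and $\Gamma$ cannot normalize both points at once; ``continuous dependence of the shadow on the base point over a compact set'' therefore has no compact set to run over. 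The correct compactness argument normalizes $\tilde{\varsigma}$ (finitely many $\Gamma$-orbits of singularities) and observes that the only feature of $\tilde{y}$ that matters is the direction of $[\tilde{\varsigma},\tilde{y}]$ at $\tilde{\varsigma}$: since the cone angle at $\tilde{\varsigma}$ is at least $3\pi$, the shadow $\partial sh_{\tilde{y}}(\tilde{\varsigma})$ contains all boundary points issuing from $\tilde{\varsigma}$ in a closed arc of directions of length at least $\pi$, and the measure of such arc-sets is bounded below uniformly by a compactness argument over the circle of directions --- this is precisely the paper's Lemma \ref{Leminterlength}. (Alternatively, the paper's own proof of Lemma \ref{Lemmeasshad} goes through Proposition \ref{Propjoingeod}.) With that estimate supplied, the rest of your argument --- the inclusion $\partial sh_{\tilde{y}}(\tilde{\varsigma}_t)\subset U_t$, the fact that boundary points in the shadow of a singularity lie in $A_{\tilde{y}}^c$, and the comparison of $U_t$ with a metric ball about $\eta$ --- is correct.
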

\begin{proof}
Let $\Sigma_{\tilde{S}} \subset \tilde{S}$ be the set of singularities and let $B_{\tilde{x},0} \subset B_{\tilde{x}}$ be the set of points $\eta\in \partial \tilde{S}$ such that the connecting geodesic $[\tilde{x}, \eta]$ does not pass through any singularity. \\
Let $A_L\subset \Sigma_{\tilde{S}},L>0$ be the set of singularities $\tilde{\varsigma}$ of distance at most $L$ to $\tilde{x}$ so that $[\tilde{x}, \tilde{\varsigma}]$ does not pass through any other singularity. \cite{Masur1990} showed that $|A_L|$ grows quadratically in $L$.
Let $k\pi$ be the cone angle at $\tilde{x}$. On the circle of directions at $\tilde{x}$ fix a clockwise ordering and choose a base direction $\theta_0$ with the property that the geodesic in direction $\theta_0$ hits a singularity. To each point $\tilde{y}\not=\tilde{x}$ in the compactification of $\tilde{S}$ associate the angle $\vartheta(\tilde{y}) \in [0,k\pi)$ between $[\tilde{x}, \tilde{y}]$ and $\theta_0$ at $\tilde{x}$. Order the points $\tilde{\varsigma}_i\in A_L,i=1, \ldots, n $ by their angle $\vartheta$. By Lemma \ref{lemscdense}, the set of directions of singularities is dense, so after enlarging $L$ we may assume that 
\begin{eqnarray*}
0&<&\vartheta(\tilde{\varsigma}_{i+1})-\vartheta(\tilde{\varsigma}_{i})\leq \pi/3~\forall i\leq n-1\\
&&k \pi -\vartheta(\tilde{\varsigma}_{n})\leq \pi/3
 \end{eqnarray*}
As in Figure \ref{fignosaddleconnection} decompose $B_{0, \tilde{x}}$ into sets
\begin{eqnarray*}
S^L_i &:=&\{\eta \in B_{0, \tilde{x}}| \vartheta(\tilde{\varsigma}_i)< \vartheta(\eta) < \vartheta(\tilde{\varsigma}_{i+1})\}\\
 S^L_n &:=&\{\eta \in B_{0, \tilde{x}}| \vartheta(\tilde{\varsigma}_n)< \vartheta(\eta)\}
\end{eqnarray*}
\begin{figure}[htbp]
 \centering
 \fbox{
 %\psfrag{x}{$x$}
\includegraphics[width=0.6\textwidth]{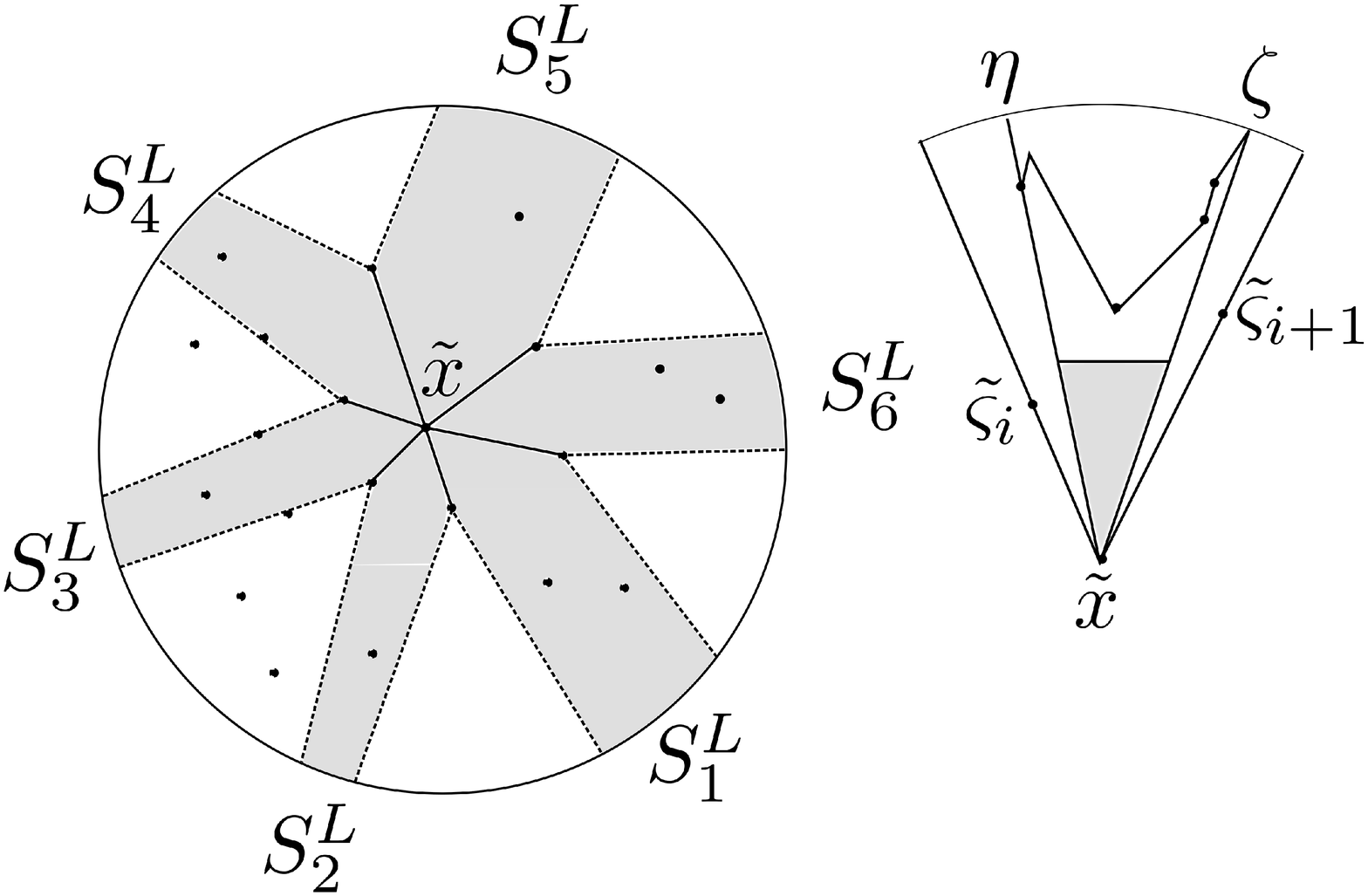}
 }
 \caption{On the left cut out the shadows of all singularities of distance at most $L$. On the right, the triangle does not contain a singularity and therefore it is euclidean.}
 \label{fignosaddleconnection}
\end{figure}
To estimate the diameter of $S^L_i$ with respect to the Gromov metric $d_{\infty,\tilde{x}}$, let $\eta, \zeta\in S^L_i,~\vartheta(\eta)\leq \vartheta(\zeta)$ and consider the triangle with vertices $\tilde{x},[\tilde{x}, \eta](L),[\tilde{x}, \zeta](L)$. This triangle is isometric to a euclidean triangle, and the inner angle at $\tilde{x}$ is less than $\pi/3$. 
Therefore we can estimate the Gromov product 
 $$([\tilde{x}, \eta](L) \cdot[\tilde{x}, \zeta](L))_{\tilde{x}} > L/3-2\delta \Rightarrow (\eta\cdot \zeta)_{\tilde{x}}\geq L/3-2\delta\Rightarrow d_{\infty,\tilde{x}}(\eta, \zeta)\leq \xi^{-L/3+4\delta}. $$ 
 So for each $L$ the sets $\bigcup S_i^L,i=1,\ldots n(L)$ cover $B_{\tilde{x},0}$. The diameter of $S^L_i$ is bounded from above by a function $\diam(L)$. Since $\diam(L)$ decreases exponentially in $L$ and $n(L)$ grows polynomially in $L$, the Hausdorff dimension of $B_{\tilde{x},0}$ has to vanish. As the Hausdorff dimension of the whole boundary is positive, $B_{\tilde{x},0}$ has measure zero with respect to the measure class $\nu_{\tilde{y}}, \tilde{y} \in \tilde{S}$. As
$$B_{\tilde{x}} \subset \bigcup\limits_{\tilde{\varsigma} \in \Sigma_{\tilde{S}}} B_{\tilde{\varsigma},0}\cup B_{\tilde{x},0}$$ 
and there are only countably many singularities, $\nu_{\tilde{x}}(B_{\tilde{x}})=0$
\end{proof}
Whereas geodesic rays in the Poincar\'{e} disc with the same boundary endpoint converge towards each other there are geodesic rays in $\tilde{S}$ with the same endpoint which do not converge, as the following examples shows: \\ 
A \textit{maximal flat strip} in a metric space is an isometric embedding of $(0,r)\times \R$ which cannot be extended. \cite{Masur1986} showed that a the universal cover $\tilde{S}$ of closed flat surface $S$ contains countable many $\Gamma$-orbits of maximal flat strips. 
Choose distinct geodesic lines $\tilde{\alpha}_i,~i=1,2$ in the same flat strip. Then $\tilde{\alpha}_i$ have finite Hausdorff distance but do not converge towards each other.\\
We will show that for a $\nu_{\tilde{x}}$-typical boundary point $\eta$, any two geodesic rays which converge to a $\eta$ eventually coincide. The argument is inspired by \cite{Duchin2010}.\\ 
A parametrized geodesic line $\tilde{\alpha}:\R\to \tilde{S}$ decomposes $\tilde{S} $ in two connected components $\tilde{S}^{\pm}$. 
For $\tilde{\alpha}(t)$ choose a standard neighborhood $\tilde{U}$ and an orientation such that $\vartheta^+(\tilde{\alpha}(t))$ or $\vartheta^-(\tilde{\alpha}(t))$ respectively measures the flat angle 
$$\angle_{\tilde{\alpha}(t)}(\left.\tilde{\alpha} \right|_{[t-\epsilon,t]}, \left.\tilde{\alpha} \right|_{[t,t+\epsilon]})$$
 inside of $\tilde{U}\cap \tilde{S}^+$ or $\tilde{U}\cap \tilde{S}^-$ respectively. \\ 
For a geodesic ray $\tilde{r}:[0,\infty) \to \tilde{S}$ choose an extension to a geodesic line and define $\vartheta^+(r(t)),~t>0$  analogously.
\begin{Def}
 A geodesic ray $\tilde{r}$ is called quasi-straight if there is some $i \in \pm$ so that 
$$\sum\limits_{t>0}(\vartheta^i(\tilde{r}(t))-\pi)<\infty.$$
A boundary point $\eta \in \partial \tilde{S}$ is \textit{quasi-straight} if there is a quasi-straight geodesic ray which tends to $\eta$.
\end{Def}
Denote by $\qstraight \subset \partial\tilde{S}$ the $\Gamma$-invariant set of quasi-straight boundary points.\\
Since the choice of $\vartheta^+$ is arbitrary, we may assume that for any quasi-straight geodesic ray the angle $\vartheta^+$ is bounded.
\begin{Prop}\label{Proptangeod}
Assume that $\tilde{r}_1,\tilde{r}_2$ are geodesic rays in $\tilde{S}$ with the same endpoint $\eta \in \partial \tilde{S}$. If $\tilde{r}_2$ is not quasi-straight, then $\tilde{r}_1$ and $\tilde{r}_2$ eventually coincide
 \end{Prop}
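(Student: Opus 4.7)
The plan is to argue by contradiction using the Gauss-Bonnet formula. Assume that $\tilde{r}_1$ and $\tilde{r}_2$ both converge to $\eta$ but do not eventually coincide, while $\tilde{r}_2$ fails to be quasi-straight. Because a closed flat surface contains no geodesic bigons (the corollary of Gauss-Bonnet stated earlier), two local geodesics in $\tilde{S}$ cannot diverge and then meet again; hence after their (possibly trivial) maximal common initial subarc the two rays separate forever. Together with a geodesic segment joining $\tilde{r}_1(0)$ and $\tilde{r}_2(0)$ (if needed) they bound an infinite region $R \subset \tilde{S}$ with the ideal vertex $\eta$ at infinity; I fix the orientation of $\tilde{r}_2$ so that $R$ lies on its $+$-side.

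Next I would exploit that $\tilde{S}$ is $\delta$-hyperbolic, so the two rays remain at bounded Hausdorff distance. For each large $T$ I can join $\tilde{r}_1(T)$ to a corresponding point $\tilde{r}_2(T')$ by a piecewise-geodesic path $\gamma_T \subset R$ whose length is bounded independently of $T$. Let $R_T$ be the compact disc cut off by $\gamma_T$ together with the initial cap; then $\chi(R_T)=1$ and Gauss-Bonnet gives
$$2\pi = \sum_{x \in \interior{R_T}} \bigl(2\pi - \vartheta(x)\bigr) + \sum_{x \in \partial R_T} \bigl(\pi - \vartheta_{\mathrm{int}}(x)\bigr).$$
Each interior singularity has cone angle $\geq 3\pi$ and contributes at most $-\pi$; at a singularity $\tilde{r}_2(t)$ on the $\tilde{r}_2$-part of $\partial R_T$, the interior angle of $R_T$ equals $\vartheta^+(\tilde{r}_2(t))$, so the contribution is $\pi - \vartheta^+(\tilde{r}_2(t)) \leq 0$; singularities on the $\tilde{r}_1$-part of $\partial R_T$ also contribute $\leq 0$, since local geodesicity of $\tilde{r}_1$ forces the interior angle there to be $\geq \pi$. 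Finally, the finitely many corner vertices (the endpoints of the rays, the endpoints of $\gamma_T$, and any singularities on $\gamma_T$) contribute a total of at most some constant $C > 0$, uniform in $T$.

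Dropping the non-positive terms and rearranging yields
$$\sum_{\tilde{r}_2(t) \in R_T \text{ singular}} \bigl(\vartheta^+(\tilde{r}_2(t))-\pi\bigr) \leq C - 2\pi,$$
for every $T$. But $\tilde{r}_2$ is not quasi-straight, so the left-hand side diverges as $T \to \infty$, a contradiction. Therefore $\tilde{r}_1$ and $\tilde{r}_2$ must eventually coincide.

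The hard part I expect is the truncation step: one must verify that the closing path $\gamma_T$ can be chosen inside $R$ with bounded combinatorial complexity (bounded length, bounded number of singularities on it, and bounded total corner contribution) uniformly in $T$, and one must check that the interior angle of $R_T$ along the $\tilde{r}_2$-boundary is indeed $\vartheta^+$ under the fixed orientation. The bounded-length statement follows from $\delta$-hyperbolicity together with the fact that rays to a common boundary point stay at bounded Hausdorff distance; the angle identification is a matter of orientation bookkeeping that requires some care, in particular when $\tilde{r}_1$ and $\tilde{r}_2$ share a nontrivial initial subarc so that the divergence corner itself may sit at a singular point.
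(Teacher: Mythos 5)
Your argument is correct and essentially the same as the paper's: the paper also applies Gauss--Bonnet to the quadrilateral bounded by the two rays and two connecting geodesics of uniformly bounded length, using that the angle excess of the non-quasi-straight ray along its side of the boundary exceeds what a disc with four corners can absorb. The only difference is organizational: the paper phrases the Gauss--Bonnet conclusion as ``the connecting geodesic $\tilde{c}_{t_0}$ must contain a terminal piece of the extension of $\tilde{r}_2$'' and then contradicts uniqueness of the geodesic ray from $\tilde{r}_1(t_0)$ to $\eta$, whereas you extract the numerical contradiction with non-quasi-straightness directly.
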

\begin{proof}
Extend $\tilde{r}_2$ to a geodesic line $\tilde{\alpha}$. By uniqueness of geodesic rays, $\tilde{\alpha}$ and $\tilde{r}_1$ are either disjoint or eventually coincide. Assume that they are disjoint and let $\tilde{S}^+$ be the component of$\tilde{S}-\tilde{\alpha}$ that contains $\tilde{r}_1$. Denote by $\tilde{c}_t$ the geodesic connecting $\tilde{r}_1(t)$ with $\tilde{\alpha}(t)$ and consider the rectangle $R_t,t>0$ with sides $\left.\tilde{\alpha} \right|_{[0,t]},\left.\tilde{r}_1 \right|_{[0,t]},\tilde{c}_0,\tilde{c}_t$. Choose $s_0>0$ that $\tilde{\alpha}([s_0,\infty])$ is disjoint from $c_0$ and choose $t_0>s_0$ that
$$\sum\limits_{s_0<s<t_0}(\vartheta^+(\tilde{\alpha}(s))-\pi)>2\pi.$$
By the Gauss-Bonnet formula, for any rectangle in $\tilde{S}^+$ with interior $P$, the boundary of $P$ does not contain the whole side $\left. \alpha \right|_{[s_0,t_0]}$. So, the geodesic $\tilde{c}_{t_0}$ intersects $\tilde{\alpha}(t_0-\epsilon)$ and therefore $\tilde{c}_{t_0}$ contains $\left. \tilde{\alpha} \right|_{[t_0-\epsilon,t_0]}$. As a consequence one can extend $\tilde{c}_{t_0}$ along $\tilde{\alpha}$ to a geodesic ray with the endpoint $\eta$. 
This is a contradiction, as $\tilde{c}_{t_0}$ issued from $\tilde{r}_1(t_0)$ and there is only one geodesic ray connecting $\tilde{r}_1(t_0)$ with $\eta$. 
\end{proof}
Consequently a boundary point $\eta$ is quasi-straight if and only if each ray which converges to $\eta$ is quasi-straight.
\begin{Prop}\label{propquasistraight}
 The set of quasi straight boundary points is of vanishing $\nu_{\tilde{x}}$-measure.
\end{Prop}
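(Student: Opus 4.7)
The plan is a shadow-based induction exploiting the fact that at each singularity the cone angle is $\geq 3\pi$, so the geodesic can branch substantially. Since a quasi-straight ray has summable excess angles on one side, for any $\eta\in\qstraight$ and any $\epsilon>0$ there exist $N\geq 1$ and $i\in\{+,-\}$ such that every singularity beyond the $N$-th on $[\tilde{x},\eta]$ satisfies $\vartheta^i-\pi<\epsilon$. Denoting this set $Q_{\epsilon,N,i}$, one has $\qstraight\subset\bigcup_{i}\bigcap_{\epsilon}\bigcup_N Q_{\epsilon,N,i}$, so it suffices to prove $\nu_{\tilde{x}}(Q_{\epsilon,N,i})=0$ for each fixed $(\epsilon,N,i)$.

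The geometric core is a uniform-in-$\tilde{\varsigma}$ shadow estimate. At a singularity $\tilde{\varsigma}$ of cone angle $k\pi\geq 3\pi$, with incoming direction fixed, the admissible continuations (where both $\vartheta^+,\vartheta^-\geq\pi$) fill a direction arc of width $(k-2)\pi\geq\pi$. The \textit{straight on side $i$} sub-arc $U=\{\vartheta^i-\pi<\epsilon\}$ has width at most $\epsilon$, while its complement $V$ has width at least $\pi-\epsilon$. By Lemma \ref{lemscdense} there is a singularity $\tilde{\varsigma}'$ whose geodesic from $\tilde{\varsigma}$ starts in a direction inside $V$, and Lemma \ref{LemShadSing} applied to $\tilde{\varsigma}'$ then provides an open subset of the boundary shadow of $V$ seen from $\tilde{\varsigma}$. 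Since the space of pairs (singularity, incoming direction) is compact modulo $\Gamma$, combining this with Corollary \ref{Corestrad} produces a constant $\rho=\rho(\epsilon,S)<1$, independent of $\tilde{\varsigma}$ and the incoming direction, such that the $\nu_{\tilde{\varsigma}}$-measure of the boundary shadow of $U$ is at most $\rho$ times the $\nu_{\tilde{\varsigma}}$-measure of the full admissible shadow of $U\cup V$.

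Iterating this ratio along successive singularities $\tilde{\varsigma}_N,\ldots,\tilde{\varsigma}_{N+m}$ met by rays in $Q_{\epsilon,N,i}$, and translating between $\nu_{\tilde{x}}$ and $\nu_{\tilde{\varsigma}_n}$ via the Radon-Nikodym bound of Corollary \ref{Corestrad}, bounds the measure of boundary points whose ray stays $\epsilon$-straight at its next $m$ singularities by $C\rho^m$. By Proposition \ref{Propinfsing} almost every ray meets infinitely many singularities, so letting $m\to\infty$ yields $\nu_{\tilde{x}}(Q_{\epsilon,N,i})=0$, and the proposition follows. The main obstacle is the uniformity of $\rho$: the incoming direction is a continuous parameter, and one must prevent the shadow of $V$ from collapsing when the turning sector approaches the cusp of the admissible cone. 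The cone-angle bound $(k-2)\pi\geq\pi$ leaves enough room, by Lemma \ref{lemscdense}, for Lemma \ref{LemShadSing} to contribute a uniformly positive measure to the shadow of $V$, regardless of the incoming direction.
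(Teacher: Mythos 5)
Your strategy is the same as the paper's: reduce to rays meeting infinitely many singularities (Proposition \ref{Propinfsing}), observe that a quasi-straight ray eventually turns by less than a fixed amount at every singularity, and run a Cantor-type induction in which, at each singularity, the ``nearly straight'' continuations occupy a thin sub-arc of the admissible cone of width $(k-2)\pi\geq\pi$, whose complement casts a shadow of uniformly positive measure, so a definite proportion of the shadow is lost at every step. Your uniform constant $\rho(\epsilon,S)$ obtained by compactness over (singularity, incoming direction) mod $\Gamma$ is exactly the content of the paper's Lemma \ref{Leminterlength} (applied with interval length $\pi-\epsilon$), so that part is a legitimate re-derivation rather than a gap; the same goes for your single arc $U$ of width $\epsilon$ versus the paper's two intervals of length $\pi/3$.

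The one step that fails as written is the base-point change. Corollary \ref{Corestrad} only gives $\exp(-e(S)d(\tilde{x},\tilde{\varsigma}_n))\leq \frac{d\nu_{\tilde{x}}}{d\nu_{\tilde{\varsigma}_n}}\leq \exp(e(S)d(\tilde{x},\tilde{\varsigma}_n))$, and $d(\tilde{x},\tilde{\varsigma}_n)\to\infty$ along the ray; transferring the ratio estimate with this two-sided bound multiplies $\rho$ by $\exp(2e(S)d(\tilde{x},\tilde{\varsigma}_n))$ at the $n$-th step, so you do not get $C\rho^m$ and the measure need not go to zero. What the induction actually needs, and what the paper uses, is the second statement of Lemma \ref{Lemradder}: for every $\eta\in\partial sh_{\tilde{x}}(\tilde{\varsigma}_n)$ the singularity $\tilde{\varsigma}_n$ lies on $[\tilde{x},\eta]$, so $\frac{d\nu_{\tilde{x}}}{d\nu_{\tilde{\varsigma}_n}}(\eta)=\exp(-e(S)d(\tilde{x},\tilde{\varsigma}_n))$ is \emph{constant} on the shadow, and hence the ratio of the $\nu$-measures of any two subsets of $\partial sh_{\tilde{x}}(\tilde{\varsigma}_n)$ is the same whether computed with $\nu_{\tilde{x}}$ or with $\nu_{\tilde{\varsigma}_n}$; the factor $\rho$ then survives the base change unchanged. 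You should also note explicitly that the shadows $\partial sh_{\tilde{x}}(\tilde{\varsigma}_n)$ of distinct $n$-th singularities are pairwise disjoint, so the per-branch estimate can be summed over the branching tree at each level. With these two repairs the argument closes and coincides with the paper's proof.
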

The following technical Lemma is needed.
\begin{Lem}\label{Leminterlength}
For the flat universal cover of a closed flat surface $\pi:\tilde{S} \to S$ there is some positive function $R: \R_{>0} \to \R_{>0}$ such that the following holds:\\
For a singularity $\tilde{\varsigma}\in \tilde{S}$ and a closed interval of directions $I$ of length $l>0$ at $\tilde{\varsigma}$ denote by $S_I$ the set of boundary points $\eta \in \partial \tilde{S}$ such that the direction of $[\tilde{\varsigma}, \eta]$ is contained in $I$\\
Then $S_I$ is Borel, and the size of $S_I$ is bounded from below the size of $R(l)$:
$$\nu_{\tilde{\varsigma}}(S_I)\geq R(l)$$
\end{Lem}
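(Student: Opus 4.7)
The plan is to combine $\Gamma$-equivariance of the Patterson--Sullivan family with a compactness/contradiction argument. For measurability, the map $\partial\tilde S\to S^1_{\tilde\varsigma}$ sending $\eta$ to the initial direction of the ray $[\tilde\varsigma,\eta]$ at $\tilde\varsigma$ is continuous, since in the proper $\Cat(0)$ space $\tilde S$ convergence in $\partial\tilde S$ implies that the connecting rays from $\tilde\varsigma$ converge uniformly on compact sets and hence their initial directions converge on the direction circle at $\tilde\varsigma$. Therefore $S_I$, being the preimage of the closed set $I$, is closed and in particular Borel.

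For the lower bound I would first reduce to finitely many basepoints. By $\Gamma$-equivariance one has $\gamma_*\nu_{\tilde\varsigma}=\nu_{\gamma\tilde\varsigma}$ and $\gamma(S_I)=S_{\gamma I}$, so $\nu_{\tilde\varsigma}(S_I)$ depends only on the $\Gamma$-orbit of the pair $(\tilde\varsigma,I)$. Since $S$ is closed there are only finitely many $\Gamma$-orbits of singularities; fixing representatives $\tilde\varsigma_1,\dots,\tilde\varsigma_k$ it suffices to produce, for each $i$, a uniform positive lower bound $R_i(l)$ on $\nu_{\tilde\varsigma_i}(S_I)$ as $I$ ranges over closed intervals of length $l$ in the direction circle at $\tilde\varsigma_i$, and then take $R(l):=\min_i R_i(l)$.

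That each individual $\nu_{\tilde\varsigma_i}(S_I)$ is strictly positive can be read off from the proof of Lemma \ref{LemShadSing}: by Lemma \ref{lemscdense} some direction in the interior of $I$ is realised by a geodesic from $\tilde\varsigma_i$ ending at another singularity $\tilde\varsigma'$. Choosing an open set $\tilde U$ in a standard neighbourhood of $\tilde\varsigma'$, small enough that every geodesic from $\tilde\varsigma_i$ through $\tilde U$ starts in a direction inside $I$, the boundary shadow $\partial sh_{\tilde\varsigma_i}(\tilde U)\subset S_I$ is open in $\partial\tilde S$ by the very definition of the topology on $\overline{\tilde S}$, and so has positive $\nu_{\tilde\varsigma_i}$-measure since the Patterson--Sullivan measure is supported on the whole boundary.

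To upgrade this pointwise positivity to a uniform bound, I parametrise closed intervals of length $l$ in $S^1_{\tilde\varsigma_i}$ by their midpoint; the parameter space is compact. If the infimum over $I$ were $0$, a Hausdorff-convergent sequence $I_n\to I_\infty$ would satisfy $\nu_{\tilde\varsigma_i}(S_{I_n})\to 0$; for every closed sub-interval $I'\subset I_\infty$ at positive distance from the endpoints of $I_\infty$ one has $I'\subset I_n$ for all large $n$, hence $\nu_{\tilde\varsigma_i}(S_{I'})\le\nu_{\tilde\varsigma_i}(S_{I_n})\to 0$ forces $\nu_{\tilde\varsigma_i}(S_{I'})=0$, contradicting the preceding positivity applied to $I'$. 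The main delicate point is exactly this last step: the ``boundary set'' $S_{I_\infty}\setminus S_{\operatorname{int}(I_\infty)}$, consisting of those $\eta\in\partial\tilde S$ whose ray from $\tilde\varsigma_i$ starts in one of the two extreme directions of $I_\infty$ together with all branch extensions at later singularities, could a priori carry mass, which is why I pass to strictly interior sub-intervals $I'$ of $I_\infty$ rather than trying to read the bound off $I_\infty$ directly.
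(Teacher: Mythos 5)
Your proof is correct, and its skeleton matches the paper's: reduce by $\Gamma$-equivariance to finitely many orbit representatives of singularities, get positivity of $\nu_{\tilde{\varsigma}_i}(S_{I})$ from Lemma \ref{lemscdense} plus the open-shadow argument of Lemma \ref{LemShadSing} and full support of the Patterson--Sullivan measure (a step the paper leaves implicit and you rightly spell out), and then make the bound uniform in $I$. The one place where you genuinely diverge is the uniformization step. The paper covers the direction circle at each $\tilde{\varsigma}_i$ by finitely many closed intervals $I_{i,j}$ of length $l/3$, arranged so that every interval of length $l$ contains one of them, and simply sets $R(l)$ equal to the finite minimum of the $\nu_{\tilde{\varsigma}_i}(S_{I_{i,j}})$; this is shorter and yields an explicit candidate for $R(l)$, at the cost of having to position the cover so that the containment claim holds. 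You instead run a compactness/contradiction argument over the (compact) space of midpoints, passing to a strictly interior subinterval $I'\subset I_\infty$ to avoid any semicontinuity issues for $I\mapsto\nu_{\tilde{\varsigma}_i}(S_I)$ at the limit interval; that last precaution is exactly the right move and is what makes your version airtight, though it produces $R(l)$ only non-constructively. Your observation that $S_I$ is in fact closed (via continuity of the initial-direction map, which follows from uniform-on-compacta convergence of rays in the proper $\Cat(0)$ space $\tilde{S}$) is slightly stronger than the paper's Borel claim and equally valid.
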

\begin{proof}
For a standard neighborhood $U$ of $\tilde{\varsigma}$, the set of points $U_I \subset\partial U$ in direction $I$ form a closed set. Thus $S_I=\partial sh_{\tilde{\varsigma}}(U_I)$ is Borel.\\ 
Choose finitely many singularities $\tilde{\varsigma}_i\in \Sigma_{\tilde{S}},i=1\ldots n$ in $\tilde{S}$ whose $\Gamma$-orbits contain all singularities. Cover the circle of directions at $\tilde{\varsigma}_i$ with finitely many closed intervals $I_{i,j}$ of length $l/3$ and define the boundary intervals $S_{I_{i,j}}\subset \partial \tilde{S}$ as the set of points $\eta$ so that the direction of $[\tilde{\varsigma}_i, \eta]$ is contained in $I_{i,j}$. Define
$$R(l):= \min\limits_{i,j}~\nu_{\tilde{\varsigma}_i}(S_{I_{i,j}})>0$$
For every other singularity $\tilde{\varsigma}$ in the $\Gamma$-orbit of $\tilde{\varsigma}_i$ translate the covering of directions and obtain the same measure. Observe that each interval of directions $I$ of length $l$ at $\tilde{\varsigma}_i$ has to contain one of the smaller intervals of length $l/3$. 
\end{proof}

\begin{proof}[Proof of the Proposition.]
Denote by $\Sigma_{\tilde{S}}$ the singularities on $\tilde{S}$ and recall that the set of quasi-straight boundary points $\qstraight$ is defined by the property that for any $\tilde{x}\in \tilde{S}$ 
$$\sum_{t>0}(\vartheta^+([\tilde{x}, \eta](t))-\pi )< \infty,~\forall \eta \in \qstraight.$$
It suffices to consider only those quasi-straight points $\eta$ such that $ [\tilde{x}, \eta]$ passes through infinitely many singularities.
Let $A_{\tilde{x}} \subset \qstraight$ be the set of quasi-straight endpoints $\eta$ such that 
$$\sum_{t>0}(\vartheta^+([\tilde{x}, \eta](t))-\pi) \leq \pi/3.$$
As $\bigcup\limits_{\tilde{\varsigma} \in \Sigma_{\tilde{S}}} A_{\tilde{\varsigma}}$ covers the quasi-straight points up to measure $0$ it suffices to show that $A_{\tilde{x}}$ is of measure $0$. \\
For each $n$ let $A_{\tilde{x},n} \subset \Sigma_{\tilde{S}}$ be the set of singularities $\tilde{\varsigma}_n$ such that $[\tilde{x}, \tilde{\varsigma}_n]$ passes through $n$ singularities and that at each singularity $\tilde{\varsigma} \in [\tilde{x}, \tilde{\varsigma}_n]$, the smaller angle between the two rays $[\tilde{x}, \tilde{\varsigma}]$ and $[\tilde{\varsigma}, \tilde{\varsigma}_n]$ is between $ \pi$ and $4\pi/3$, see Figure \ref{figquasistr}.
\begin{figure}[htbp]
 \centering
 \fbox{
 %\psfrag{x}{$x$}
\includegraphics[width=0.4\textwidth]{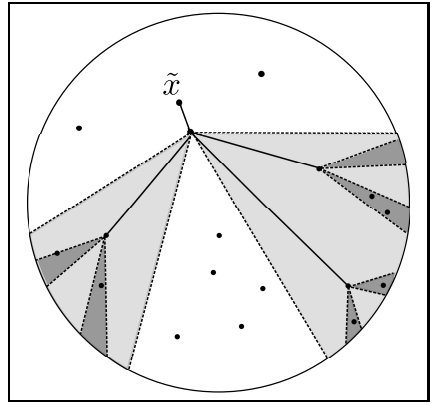}
 }
 \caption{For any singularity in $A_{\tilde{x},1}$ the further points in $A_{\tilde{x},2}$ are contained in the light gray part. For the consecutive singularity in $A_{\tilde{x},3}$ we see a splitting in two parts, hence a Cantor construction. }
 \label{figquasistr}
\end{figure}
Define the mapping 
$$\phi_n:A_{\tilde{x},n+1} \to A_{\tilde{x},n},~\tilde{\varsigma}_{n+1} \mapsto \tilde{\varsigma}_{n}$$ 
which projects $\tilde{\varsigma}_{n+1} \in A_{n+1}$ to the $n$-th singularity in $[\tilde{x}, \tilde{\varsigma}_{n+1}]$.\\ 
Consequently 
$$A_{\tilde{x}} \subset \partial sh_{\tilde{x}}(A_{\tilde{x},n+1}) \subset \partial sh_{\tilde{x}}(A_{\tilde{x},n})$$ 
 For $\tilde{\varsigma}_n\not=\tilde{\varsigma}_n'\in A_{\tilde{x},n}$ the shadows $\partial sh_{\tilde{x}}(\tilde{\varsigma}_n)$ and $\partial sh_{\tilde{x}}(\tilde{\varsigma}_n')$ are disjoint and so
$$\nu_{\tilde{x}}(\partial sh_{\tilde{x}}(A_{\tilde{x},n}))=\sum_{\tilde{\varsigma}_n \in A_{\tilde{x},n}} \nu_{\tilde{x}}(\partial sh_{\tilde{x}}(\tilde{\varsigma}_n)).$$
Fix $\tilde{\varsigma}_{n+1}\in A_{\tilde{\varsigma}_0,n+1}$ and $\tilde{\varsigma}_n:=\phi_n(\tilde{\varsigma}_{n+1})$. 
A boundary point $\eta$ is contained in $\partial sh_{\tilde{x}}(\tilde{\varsigma}_{n+1})$ only if the direction of $[\tilde{\varsigma}_n, \tilde{y}]$ is contained in two intervals of directions $I_1,I_2$ at $\tilde{\varsigma}_n$ which are both of length $\pi/3$.\\
 The set of boundary points in the whole shadow $\partial sh_{\tilde{x}}(\tilde{\varsigma}_n)$ defines an interval of direction at $\tilde{\varsigma}_n$ of length at least $\pi$, so the boundary shadow $\partial sh_{\tilde{x}}(\phi_n^{-1}(\tilde{\varsigma}_n))$ misses a subinterval of directions of length at least $\pi/3$ at $\tilde{\varsigma}_{n}$.\\ 
For the function $R$ of Lemma \ref{Leminterlength} it follows that 
\begin{eqnarray*}
&&\nu_{\tilde{\varsigma}_n}(\partial sh_{\tilde{x}}(\tilde{\varsigma}_n))\geq\nu_{\tilde{\varsigma}_n}(\partial sh_{\tilde{x}}(\phi_n^{-1}(\tilde{\varsigma}_n)))+R(\pi/3)\\
&\Leftrightarrow&1-\frac{R(\pi/3)}{\nu_{\tilde{\varsigma}_n}(\partial sh_{\tilde{x}}(\tilde{\varsigma}_n))}\geq \frac{\nu_{\tilde{\varsigma}_n}(\partial sh_{\tilde{x}}(\phi_n^{-1}(\tilde{\varsigma}_n)))}{\nu_{\tilde{\varsigma}_n}(\partial sh_{\tilde{x}}(\tilde{\varsigma}_n))}=\frac{\nu_{\tilde{x}}(\partial sh_{\tilde{x}}(\phi_n^{-1}(\tilde{\varsigma}_n))}{\nu_{\tilde{x}}(\partial sh_{\tilde{x}}(\tilde{\varsigma}_n))}
\end{eqnarray*}
The last equality is due to Lemma \ref{Lemradder}. Since $\nu_{\tilde{\varsigma}_n}(\partial sh_{\tilde{x}}(\tilde{\varsigma}_n))\leq \exp(e(S)\diam(S))$, the left term has a uniform upper bound $\lambda<1$. Therefore 
$$ \nu_{\tilde{x}}\left(\partial sh_{\tilde{x}}(A_{\tilde{x},n+1})\right)\leq \lambda\cdot\nu_{\tilde{x}}\left( \partial sh_{\tilde{x}}(A_{\tilde{x},n})\right)$$
Since 
$$A_{\tilde{x}}\subset\bigcap_{n\geq 0} \partial sh_{\tilde{x}}(A_{\tilde{x},n})$$
and since $\nu_{\tilde{y}}$ was extended to a complete measure$\nu_{\tilde{y}}(A_{\tilde{x}})=0$ for any $\tilde{y}\in \tilde{S}$.
\end{proof}

 \section{Geodesic flow} \label{sectgeodflow}
To investigate how the typical geodesic winds through $S$, we define the geodesic flow with respect to the flat metric. 
Similar constructions have already been made by Kaimanovich \cite{Kaimanovich1994} in the case of $\delta$-hyperbolic spaces with additional conditions, by Bourdon \cite{Bourdon1995} for $\Cat(-1)$ spaces and by Coornaert and Papadopoulos \cite{Coornaert1994}, \cite{Coornaert1997} for metric trees resp. graphs. 
\subsection{Construction of the geodesic flow}
We recall the necessary concepts and results.\\ 
For the flat universal cover $\pi:\tilde{S} \to S$ of a closed flat surface let $\nu_{\tilde{x}}$ be the Patterson-Sullivan measure on the compactification of $\tilde{S}$ with respect to some base point $\tilde{x}\in \tilde{S}$. We defined the quasi-straight boundary points which are of measure 0 for $\nu_{\tilde{x}}$. 
Denote by $d_{\infty, \tilde{x}}$ the family of Gromov metrics on $\partial \tilde{S}$ which satisfies 
$$(1-\epsilon(\xi))(\xi^{-(\eta\cdot \zeta)_{\tilde{x}}}) \leq d_{\infty, \tilde{x}}(\eta, \zeta)\leq \xi^{-(\eta\cdot \zeta)_{\tilde{x}}}. $$ 
 Since the flat metric is not smooth, one cannot make use of the unit tangent bundle for the geodesic flow. The space of bi-infinite parametrized unit speed geodesics plays the role of the unit tangent bundle. 
\begin{Def}
Let $\G\tilde{S}$ or $\G S$ respectively be the set of all parametrized bi-infinite unit speed geodesics $\alpha$ in $\tilde{S}$ or $S$ respectively which is endowed with the metric
$$d_{\G}(\alpha, \alpha'):=\int_{-\infty}^\infty d(\alpha(t), \alpha'(t))exp(-|t|)dt.$$
\end{Def}
 $\G \tilde{S}$ is proper, and $\G S$ is compact. The group of Deck transformations $\Gamma$ acts isometric, properly discontinuously and freely on $\G\tilde{S}$ as 
 $$\gamma:\tilde{\alpha} \mapsto \gamma(\tilde{\alpha}).$$
The canonical mapping $\G \tilde{S}/\Gamma \to \G S$ is a homeomorphism. The \textit{geodesic flow} $g_t$ acts on $\G\tilde{S}$ resp. $\G S$ as $g_t(\alpha)(s):=\alpha(t+s)$.\\
Define $\tau: \G\tilde{S} \to \partial^2 \tilde{S}-\triangle$ to be the projection of a geodesic onto its endpoints. $\tau$ is onto and equivariant with respect to $\Gamma$. $g_t$ acts on the fibers of $\tau$.\\
Fix $\tilde{x}\in \tilde{S}$ and the Patterson-Sullivan measure $\nu_{\tilde{x}}$. For $(\eta, \zeta) \in \partial \tilde{S}-\triangle$ there is a geodesic $[\eta, \zeta]$ connecting $\eta$ with $\zeta$. Fix $\tilde{y} \in [\eta, \zeta]$ and define
$$\iota_{\tilde{x}}: (\partial^2 \tilde{S}-\triangle) \to \R_+, \iota_{\tilde{x}}(\eta, \zeta):=\frac{d\nu_{\tilde{y}}}{d\nu_{\tilde{x}}}(\eta)\frac{d\nu_{\tilde{y}}}{d\nu_{\tilde{x}}}(\zeta).$$
If $\eta$ or $\zeta$ is not quasi-straight, $[\eta, \zeta]$ is unique up to reparametrization , by Proposition \ref{propquasistraight}. By Lemma \ref{Lemradder}, $\iota_{\tilde{x}}$ is independent of the choice of $\tilde{y}\in [\eta, \zeta]$. Moreover, the mapping $t\mapsto g_t([\eta, \zeta])$ defines an $\R$-parametrization of the fiber $\tau^{-1}(\eta, \zeta)$.
Define the following Borel measure on $\partial^2 \tilde{S}-\triangle$:
$$\tilde{\nu}_{\tilde{x}}:=\iota_{\tilde{x}} \cdot \nu^2_{\tilde{x}}$$
which, by Lemma \ref{Lemradder} satisfies the following properties:
\begin{itemize}
 \item $\tilde{\nu}_{\tilde{x}}$, is independent of $\tilde{x}$ and so can be abbreviated by $\tilde{\nu}$. 
\item $\tilde{\nu}$ is a $\Gamma$-invariant infinite Radon measure in the measure class of $\nu_{\tilde{x}}^2$.
\end{itemize}
For a $\tilde{\nu}$-typical point $(\eta, \zeta)\in \partial^2 \tilde{S}-\triangle$, fix a connecting geodesic $[\eta, \zeta]$ and pull back the Lebesgue measure $dt$ on $\R$ to the fiber via the parametrization of $g_t$. This fiber measure is independent of the parametrization as the transition map is a translation. 
\begin{Def}
Define the $\Gamma$- invariant Radon measure $\tilde{\mu}$ on $\G\tilde{S}$ 
 $$\tilde{\mu}:=\tilde{\nu}\times dt .$$
 \end{Def}
 For $U\subset \tilde{S}$ Borel define the Borel set $\G U:=\{\tilde{\alpha} \in \G \tilde{S}, \tilde{ \alpha}(0)\in U\}$.
\begin{Def}
A Borel fundamental domain $F\subset \tilde{S}$ of $\tilde{S}/\Gamma$ defines the Borel fundamental domain $\G F$ for $\G\tilde{S}/\Gamma$. Define the finite Radon measure $\mu$ on $\G S$ as 
$$\mu(U):= \tilde{\mu}(\pi^{-1}(U)\cap \G F).$$
\end{Def}

\begin{Rmk}
 $\mu$ is independent of $F$. One can also choose directly a Borel fundamental domain in $\G \tilde{S}$ and obtain the same measure $\mu$. On the other hand, special properties of the domain $\G F$ are needed later. 
\end{Rmk}
\begin{Prop}
 $g_t$ acts $\mu$-ergodically on $(\G S, \mu)$.
\end{Prop}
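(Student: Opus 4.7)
The plan is to apply the classical Hopf argument, with Proposition~\ref{Proptangeod} and Proposition~\ref{propquasistraight} substituting for the usual stable-manifold convergence available in negative curvature. First I fix a continuous function $f:\G S\to\R$, lift it to a $\Gamma$-invariant continuous function $\tilde f$ on $\G\tilde S$, and, since $\mu$ is a finite $g_t$-invariant Borel measure on the compact space $\G S$, invoke Birkhoff's ergodic theorem to produce the forward and backward time averages
\[
f^{\pm}(\alpha):=\lim_{T\to\infty}\frac{1}{T}\int_0^T f(g_{\pm t}\alpha)\,dt
\]
$\mu$-almost everywhere, with $f^+=f^-$ $\mu$-a.e. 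Denote the corresponding $\Gamma$-invariant, $g_t$-invariant lifts on $\G\tilde S$ by $\tilde f^{\pm}$.

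The central step is to show that $\tilde f^+$ depends only on the forward endpoint. Let $\tilde\alpha,\tilde\alpha'\in\G\tilde S$ share forward endpoint $\eta\in\partial\tilde S$ and assume $\eta$ is not quasi-straight; by Proposition~\ref{propquasistraight} this excludes only a $\nu_{\tilde x}$-null, hence $\tilde\nu$-null, set. Proposition~\ref{Proptangeod} then forces the forward rays $\tilde\alpha|_{[0,\infty)}$ and $\tilde\alpha'|_{[0,\infty)}$ to coincide eventually; therefore for some time shift $s\in\R$ and all sufficiently large $t$ one has $g_t\tilde\alpha=g_{t+s}\tilde\alpha'$. Since $\tilde f$ is bounded on the common forward tail, this implies $\tilde f^+(\tilde\alpha)=\tilde f^+(\tilde\alpha')$. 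Hence $\tilde f^+$ descends $\tilde\mu$-a.e.\ to a measurable function $F(\eta)$ on $\partial\tilde S$, and symmetrically $\tilde f^-$ descends to $G(\zeta)$.

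The last step is a Fubini argument on the boundary. The $\tilde\mu$-a.e.\ identity $\tilde f^+=\tilde f^-$ translates, via the parametrization $\G\tilde S\simeq(\partial^2\tilde S-\triangle)\times\R$, into $F(\eta)=G(\zeta)$ for $\tilde\nu$-a.e.\ $(\eta,\zeta)$. Because $\tilde\nu=\iota_{\tilde x}\cdot\nu_{\tilde x}^2$ with $\iota_{\tilde x}>0$, and $\nu_{\tilde x}$ has full support on $\partial\tilde S$, a standard Fubini argument applied to $\nu_{\tilde x}^2$ forces $F$ and $G$ to equal a common constant $c$ $\nu_{\tilde x}$-almost everywhere. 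Consequently $f^+\equiv c$ $\mu$-a.e.\ for every continuous $f$, which is ergodicity.

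The main obstacle is the middle step: proving that $\tilde f^+$ factors through the forward-endpoint projection. In the $\Cat(-1)$ or Gromov-hyperbolic literature this follows from exponential convergence along stable leaves, but on a flat surface two geodesic rays sharing an endpoint can remain a positive distance apart indefinitely (for instance inside a flat strip between lifts of a maximal flat cylinder). The replacement is the dichotomy of Proposition~\ref{Proptangeod}, which upgrades ``common non-quasi-straight endpoint'' to ``eventually coincident,'' together with the measure-theoretic negligibility of quasi-straight endpoints afforded by Proposition~\ref{propquasistraight}.
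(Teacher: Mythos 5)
Your proof is correct and follows essentially the same route as the paper: both are the Hopf argument, resting on Proposition \ref{propquasistraight} and Proposition \ref{Proptangeod} to guarantee that typical geodesics sharing a (non-quasi-straight) endpoint eventually coincide, so that Birkhoff averages are constant along asymptotic classes. The only difference is cosmetic and lies in the closing step: you factor $\tilde f^{\pm}$ through the endpoint projections and apply Fubini to $\nu_{\tilde x}^2$, whereas the paper connects two typical geodesics by a chain of three asymptotic pairs; both are standard ways to finish the Hopf argument.
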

\begin{proof}
This follows from the Hopf Argument \cite{Hopf1971} which we sketch here. \\ 
Let $f:\G S \to \R$ be a continuous function with compact support. By the Birkhoff Ergodic Theorem $\lim\limits_{s\rightarrow \infty}s^{-1}\int^s_0 f(g_t)dt$ converges a.e. to a measurable $g_t$-invariant function $f^*$. \\
As for any two bi-infinite $\mu$-typical geodesics $\alpha,\alpha' \in \G S$ in the base surface there are parametrized geodesics $\alpha_1,\alpha_2 \in \G S$, such that each of the pairs
 $(\tilde{\alpha},\tilde{\alpha}_1)$, $ (\tilde{\alpha}_1,\tilde{\alpha}_{2})$, $(\tilde{\alpha}_2,\tilde{\alpha}')$ is asymptotic in positive or negative direction, $f^*(\alpha)=f^*(\alpha')$ and so $f^*$ is constant a.e. 
\end{proof}
\subsection{Typical behavior}\label{secttypi}
On a flat surface $S$, the extension of a compact geodesic arc which connects singularities is not unique. It turns out that the set of geodesics, which pass through such an arc has positive measure. Recall that $e(S)$ is the entropy of $S$. 
 \begin{Def}
For the flat universal cover $\pi:\tilde{S}\to S$ of a closed flat surface fix $\tilde{x}\in \tilde{S}$. For a singularity $\tilde{\varsigma}$ define
$$r_{\tilde{x}}(\tilde{\varsigma}):=\nu_{\tilde{\varsigma}}(\partial sh_{\tilde{x}}(\tilde{\varsigma}))= \nu_{\tilde{x}}\left(\partial sh_{\tilde{x}}(\tilde{\varsigma})\right)\exp(e(S)d(\tilde{x}, \tilde{\varsigma})).$$
\end{Def}
The equation follows from Lemma \ref{Lemradder}. We will show, that $r_{\tilde{x}}(\tilde{\varsigma})$ is nearly constant and so one can roughly identify $\nu_ {\tilde{x}}(\partial sh_{\tilde{x}}(\tilde{\varsigma}))$ with $\exp(-e(S)d(\tilde{x}, \tilde{\varsigma}))$.
\begin{Lem} \label{Lemmeasshad}
$r_{\tilde{x}}(\tilde{\varsigma})$ is bounded from above and below by a constant $C(S)>0$. 
$$C(S)^{-1} \leq r_{\tilde{x}}(\varsigma) \leq C(S) .$$
 \end{Lem}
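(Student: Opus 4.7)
The plan is to treat the upper and lower bounds separately; the upper bound is soft while the lower bound requires identifying the shadow with an interval of directions at $\tilde{\varsigma}$ and invoking Lemma \ref{Leminterlength}.

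For the upper bound, one simply notes that $r_{\tilde{x}}(\tilde{\varsigma}) \leq \nu_{\tilde{\varsigma}}(\partial \tilde{S})$, and by Corollary \ref{Corestrad} this is bounded above by $\exp(e(S)\diam(S))$, uniformly in $\tilde{x}$ and $\tilde{\varsigma}$. So taking $C(S) \geq \exp(e(S)\diam(S))$ handles one direction.

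The main work is the lower bound. The idea is to show that $\partial sh_{\tilde{x}}(\tilde{\varsigma})$, viewed from $\tilde{\varsigma}$, contains an interval of directions of angular length at least $\pi$, and then apply Lemma \ref{Leminterlength}. Concretely, a boundary point $\eta$ lies in $\partial sh_{\tilde{x}}(\tilde{\varsigma})$ precisely when the concatenation of $[\tilde{x},\tilde{\varsigma}]$ with a geodesic ray from $\tilde{\varsigma}$ toward $\eta$ is locally geodesic, i.e. when the outgoing direction at $\tilde{\varsigma}$ makes flat angle at least $\pi$ with the reversed incoming direction on both sides. Since the cone angle at $\tilde{\varsigma}$ is $k\pi$ with $k\geq 3$, the set of admissible outgoing directions forms a closed arc of angular length $(k-2)\pi \geq \pi$ in the circle of directions at $\tilde{\varsigma}$. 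Call this arc $I$ and let $S_I \subset \partial \tilde{S}$ be the corresponding set of boundary points, so $S_I \subset \partial sh_{\tilde{x}}(\tilde{\varsigma})$ up to the measure zero set of quasi-straight boundary points handled in Proposition \ref{propquasistraight}.

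Applying Lemma \ref{Leminterlength} to the interval $I$ of length $\pi$ gives
$$\nu_{\tilde{\varsigma}}(\partial sh_{\tilde{x}}(\tilde{\varsigma})) \geq \nu_{\tilde{\varsigma}}(S_I) \geq R(\pi).$$
The constant $R(\pi)$ depends only on $S$ (since Lemma \ref{Leminterlength} uses the finitely many $\Gamma$-orbits of singularities and the $\Gamma$-equivariance $\gamma^*\nu_{\gamma(\tilde{x})} = \nu_{\tilde{x}}$), so this bound is uniform in $\tilde{x}$ and $\tilde{\varsigma}$. Setting $C(S) := \max\bigl(\exp(e(S)\diam(S)),\, R(\pi)^{-1}\bigr)$ yields the claim.

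The main obstacle is checking that the admissible outgoing directions at $\tilde{\varsigma}$ genuinely form a single arc of length $\geq \pi$ and that the associated set $S_I$ sits inside $\partial sh_{\tilde{x}}(\tilde{\varsigma})$ up to a null set; everything else is an assembly of previously established facts.
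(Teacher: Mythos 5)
Your upper bound is identical to the paper's. Your lower bound, however, takes a genuinely different route, and it is correct. You observe that the outgoing directions $w$ at $\tilde{\varsigma}$ with $\angle(w,-v)\geq\pi$ on both sides of the incoming direction $-v$ of $[\tilde{x},\tilde{\varsigma}]$ form a single closed arc of length $(k-2)\pi\geq\pi$ in the circle of directions (cone angle $k\pi$, $k\geq 3$), that concatenating $[\tilde{x},\tilde{\varsigma}]$ with any ray in such a direction is a local, hence global, geodesic in the $\Cat(0)$ cover, so that $S_I\subset \partial sh_{\tilde{x}}(\tilde{\varsigma})$ (the inclusion in fact holds exactly, so your hedge about quasi-straight points is unnecessary), and you then quote Lemma \ref{Leminterlength} with $l=\pi$. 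Since Lemma \ref{Leminterlength} is established in Section \ref{sectasymprays}, before the present statement, this is a legitimate and rather economical argument. The paper instead fixes a singularity $\tilde{\varsigma}_0$ and four saddle connections whose shadows cover $\partial\tilde{S}$, picks one of measure at least $\exp(-e(S)\diam(S))/4$, and uses Proposition \ref{Propjoingeod} to splice it onto $[\tilde{x},\tilde{\varsigma}]$ with a uniformly bounded length defect, concluding via the Radon--Nikodym estimates of Lemma \ref{Lemradder} that $\partial sh_{\tilde{x}}(\tilde{\varsigma})$ contains a nested shadow of definite $\nu_{\tilde{\varsigma}}$-mass. What each buys: your argument avoids Proposition \ref{Propjoingeod} entirely and localizes everything at $\tilde{\varsigma}$, at the cost of relying on the positivity built into the function $R$ of Lemma \ref{Leminterlength} (which is where the compactness/cocompactness of the situation is actually used); the paper's argument yields a constant with a more explicit dependence on $\diam(S)$, the splicing constant $C_l(S)$, and the lengths of four fixed saddle connections.
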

 \begin{proof}
 Denote by $\diam (S)$ the diameter of $S$ and choose $C(S)\geq \exp(e(S)\diam(S))$. Then, the upper bound follows from Corollary \ref{Corestrad}:
 $$r_{\tilde{x}}(\varsigma)=\nu_{\tilde{\varsigma}}(\partial sh_{\tilde{x}}(\tilde{\varsigma}))\leq\nu_{\tilde{\varsigma}}(\partial \tilde{S})\leq \exp(e(S)\diam(S))$$
 On the other hand, fix some singularity $\tilde{\varsigma}_0\in \tilde{S}$. By Lemma \ref{lemscdense} and the fact that the cone angle at $\tilde{\varsigma}_0$ is at least $3\pi$ there are $4$ saddle connections $\tilde{s}_1, \ldots \tilde{s}_4$ with starting point $\tilde{\varsigma}_i$ and endpoint $\tilde{\varsigma}_0$ so that the shadows 
 $$\bigcup_{i\leq 4}\partial sh_{\tilde{\varsigma}_i}(\tilde{\varsigma}_0)=\partial \tilde{S}$$ 
 cover the boundary. 
 As $\nu_{\tilde{\varsigma}_0}(\partial \tilde{S})\geq \exp(-e(S)\diam(S))$, up to renumbering the saddle connections we can assume 
$$r_{\tilde{\varsigma}_1}(\tilde{\varsigma}_0)=\nu_{\tilde{\varsigma}_0}(\partial sh_{\tilde{\varsigma}_1}(\tilde{\varsigma}_0)) \geq \exp(-e(S)\diam(S))/4.$$ 
 By Proposition \ref{Propjoingeod} there exists a geodesic $\tilde{c}$ which first connects $\tilde{x}$ with $\tilde{\varsigma}$ and eventually passes through $\gamma(\tilde{s}_1)$ for some $\gamma \in \Gamma$ so that the length of $\tilde{c}$ is bounded 
$$l(\tilde{c})\leq C_l(S)+ l(\tilde{s}_1)+d(\tilde{x}, \tilde{\varsigma})$$
see Figure \ref{fignestshad}. 
\begin{figure}[htbp]
 \centering
 \fbox{
 
\includegraphics[width=0.3\textwidth]{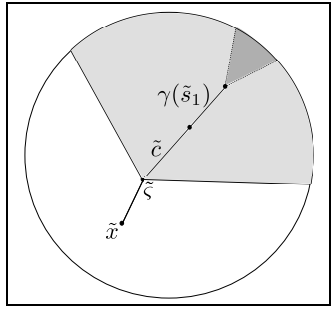}
 }

 \caption{The shadow of $\tilde{\varsigma}$ with respect to $\tilde{x}$, marked light gray, contains the shadow of the endpoint of $\gamma(\tilde{s}_1)$, here dark gray. }
 \label{fignestshad}
\end{figure}
As $\partial sh_{\tilde{x}}(\tilde{\varsigma})$ contains $\partial sh_{\tilde{x}}(\gamma(\tilde{\varsigma}_0))$ one observes
\begin{eqnarray*}
r_{\tilde{x}}(\tilde{\varsigma})&\geq&\nu_{\tilde{x}}(\partial sh_{\tilde{x}}(\gamma(\tilde{\varsigma}_0)))\exp(e(S)d(\tilde{x}, \tilde{\varsigma}))\\
&\geq & \exp(-e(S)(l(s_1)+C_l(S)+\diam(S)))/4
 \end{eqnarray*}
After enlarging $C(S)$ this completes the proof. 
\end{proof}
We return to the geodesic flow.
\begin{Thm}\label{Thmgenericbehaviour}
There is a constant $C(S)>0$ which depends on the geometry of $S$ such that the following holds: For any local geodesic $c: [0,s] \to S$ of positive length, let $c_{ext}$ be the maximal extension of $c$ with the property that the extension in unique. A typical geodesic passes through $c$ with a frequency $\varLambda$ which is bounded from above and below by 
$$C(S)^{-1}exp(-e(S)l(c_{ext}))\leq \varLambda\leq C(S) exp(-e(\tilde{S}, \Gamma_S)l(c_{ext})).$$ 
\end{Thm}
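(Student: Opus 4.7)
The plan is to identify $\varLambda$ via the Birkhoff ergodic theorem with the $\tilde\nu$-measure of an explicit product of boundary shadows in $\partial\tilde S$, and then to estimate those shadows using Lemma \ref{Lemmeasshad} and the Radon--Nikodym equality of Lemma \ref{Lemradder}. First I would fix a lift $\tilde c$ of $c$ inside some fundamental domain and define the flow tube $\tilde B_c \subset \G\tilde S$ of those $\tilde\alpha$ with $\tilde\alpha(0) \in \tilde c$ which traverse $\tilde c$ positively. Under the Hopf decomposition $\tilde\mu = \tilde\nu \times dt$ the tube splits as a product $A \times [0, l(c)]$, where $A \subset (\partial\tilde S)^2 - \triangle$ is the set of endpoint pairs of bi-infinite geodesics containing $\tilde c$; hence $\tilde\mu(\tilde B_c) = \tilde\nu(A) \cdot l(c)$. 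Ergodicity of $g_t$ on $(\G S, \mu)$ together with Birkhoff then give that a typical geodesic spends the fraction $\mu(B_c)/\mu(\G S) = \tilde\nu(A)\, l(c)/\mu(\G S)$ of its time inside $B_c$, and since each passage through $c$ occupies a time interval of length $l(c)$ the frequency of passages becomes
$$\varLambda = \tilde\nu(A)/\mu(\G S),$$
so it suffices to bound $\tilde\nu(A)$ above and below by $\exp(-e(S) l(c_{ext}))$ up to constants depending only on $S$.

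The second step is a shadow description of $A$. Let $\tilde c_{ext}$ be the lift extending $\tilde c$, with endpoints $\tilde p, \tilde q$, which by the defining uniqueness of the extension are singularities unless $c_{ext}$ is unbounded on the corresponding side. A bi-infinite geodesic passes through $\tilde c$ iff it passes through all of $\tilde c_{ext}$; reversing the portion from $\tilde q$ back through $\tilde p$ and continuing to the backward endpoint $\eta$ exhibits $\eta$ as a point whose connecting geodesic from $\tilde q$ runs through $\tilde p$, so $\eta \in \partial sh_{\tilde q}(\tilde p)$, and symmetrically the forward endpoint lies in $\partial sh_{\tilde p}(\tilde q)$. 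The converse inclusion is obtained by concatenating the two rays along $\tilde c_{ext}$, so $A = \partial sh_{\tilde q}(\tilde p) \times \partial sh_{\tilde p}(\tilde q)$.

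For the estimate, I would fix $\tilde x \in \tilde c_{ext}$ with $d(\tilde x, \tilde p) = a$ and $d(\tilde x, \tilde q) = l(c_{ext}) - a$. Every $(\eta, \zeta) \in A$ has $\tilde x$ on its connecting geodesic, so one may take $\tilde y = \tilde x$ in the definition of $\iota_{\tilde x}$, forcing $\iota_{\tilde x}(\eta, \zeta) = 1$ and $\tilde\nu(A) = \nu_{\tilde x}(\partial sh_{\tilde q}(\tilde p)) \cdot \nu_{\tilde x}(\partial sh_{\tilde p}(\tilde q))$. Since $\tilde p$ lies on $[\tilde x, \eta]$ for every $\eta \in \partial sh_{\tilde q}(\tilde p)$, the equality case of Lemma \ref{Lemradder} gives
$$\nu_{\tilde x}(\partial sh_{\tilde q}(\tilde p)) = \exp(-e(S) a) \cdot r_{\tilde q}(\tilde p),$$
and Lemma \ref{Lemmeasshad} bounds the shadow density $r_{\tilde q}(\tilde p)$ between $C(S)^{-1}$ and $C(S)$; the symmetric statement holds for the forward shadow. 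Multiplying the two estimates gives $\tilde\nu(A)$ comparable to $\exp(-e(S) l(c_{ext}))$, and absorbing $\mu(\G S)$ into the constant yields the theorem. In the degenerate case where $c_{ext}$ is infinite on some side, the corresponding shadow collapses to a single boundary point, which has $\nu_{\tilde x}$-mass zero, consistent with $\exp(-e(S) \cdot \infty) = 0$.

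The hard part will be the shadow description of $A$ in the middle step: a rigorous argument has to exclude the $\tilde\nu$-null set of quasi-straight endpoints and flat-strip configurations analyzed in Section \ref{sectasymprays}, so that a typical pair $(\eta, \zeta) \in A$ is joined by an essentially unique bi-infinite geodesic and the concatenation along $\tilde c_{ext}$ actually produces a genuine line through $\tilde c$; checking that the time parametrization in the Hopf coordinates gives exactly the factor $l(c)$ (rather than, e.g., counting reparametrization artifacts) is the other routine but necessary bookkeeping.
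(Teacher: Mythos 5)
Your proposal is correct and follows essentially the same route as the paper's proof: ergodicity identifies $\varLambda$ with the $\tilde\nu$-mass of a product of boundary shadows determined by $\tilde c_{ext}$, the choice of basepoint on $\tilde c_{ext}$ makes $\iota=1$, and the equality case of Lemma \ref{Lemradder} together with Lemma \ref{Lemmeasshad} yields the two-sided bound $\exp(-e(S)l(c_{ext}))$ up to constants, with the infinite-extension case handled by the shadow degenerating to a point. The only (immaterial) differences are bookkeeping: the paper takes shadows of $\tilde c(0)$ and $\tilde c(s)$ from the midpoint of $\tilde c$ and then identifies them with the shadows of the singular endpoints of $\tilde c_{ext}$ via uniqueness of the extension, and it sets up the frequency with a small time-$\epsilon$ entry window rather than your flow-tube of duration $l(c)$.
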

\begin{proof}
Denote by $A_c \subset \G S$ the Borel set 
$$ A_c:=\{\alpha \in \G S: \exists 0<t\leq 1: \left.g_t(\alpha)\right|_{[0,s]}=c\}.$$
As $g_t$ acts ergodically with respect to $\mu$, for any typical geodesic $\alpha \in \G S$ 
$$\lim\limits_{T\rightarrow \infty}\frac{1}{2T}\int_{-T}^{T} 1_{A_c}(g_t(\alpha)) dt= \frac{\mu(A_c)}{\mu(\G S)}.$$
Choose some lift $\tilde{c}$ of $c$ in the universal cover $\tilde{S}$ and let $F\subset \tilde{S}$ be a Borel fundamental domain of $S$ which contains $\tilde{c}(0)$ in its interior.\\
Choose $\epsilon>0$ such that 
\begin{eqnarray*}
&&B_{\tilde{c}(0)}(\epsilon) \subset \interior{F}\\
&& d(\gamma(\tilde{c}(0)),F) >\epsilon, \forall \gamma \in \Gamma-id
\end{eqnarray*}
Define
\begin{eqnarray*}
&&A_{c,\epsilon}:=\{\alpha\in \G S:\left. g_t(\alpha)\right|_{[0,s]}=c,0 <t\leq \epsilon\}\\
&&\tilde{A}_{\tilde{c},\epsilon}:= \{\tilde{\alpha} \in G\tilde{S}:\left.g_t(\tilde{\alpha})\right|_{[0,s]}=\tilde{c},0<t\leq\epsilon\}
\end{eqnarray*}
By construction
$$\mu(A_c)=\frac{1}{\epsilon} \mu(A_{c,\epsilon}) = \frac{1}{\epsilon}\tilde{\mu}(\tilde{A}_{\tilde{c},\epsilon}).$$ 
Choose $\tilde{ x}:=\tilde{c}(s/2)$ the midpoint of $\tilde{c}$ and observe that 
$$\tau(\tilde{A}_{\tilde{c},\epsilon})= \partial sh_{\tilde{x}}(\tilde{c}(0))\times \partial sh_{\tilde{x}}(\tilde{c}(s)).$$
Furthermore, for each pair of points $(\eta, \zeta) \in \tau(\tilde{A}_{\tilde{c},\epsilon})$ there exists a unique connecting geodesic $[\eta, \zeta]$ which coincides with $\tilde{c}$ on the interval $[0,s]$. Consequently 
$$g_t([\eta, \zeta]) \in \tilde{A}_{\tilde{c},\epsilon} \Leftrightarrow t \in [0, \epsilon] .$$
The Lebesgue measure of the intersection of the fiber with $\tilde{A}_{\tilde{c},\epsilon}$ is $\epsilon$ and so
$$\mu(A_{c})= \tilde{\nu}(\partial sh_{\tilde{x}}(\tilde{c}(0))\times \partial sh_{\tilde{x}}(\tilde{c}(s)))=\nu_{\tilde{x}}(\partial sh_{\tilde{x}}(\tilde{c}(0)))\cdot\nu_{\tilde{x}} (\partial sh_{\tilde{x}}(\tilde{c}(s))).$$ 
where $\nu_{\tilde{x}}$ is the Patterson Sullivan measure. \\
If the endpoint $\tilde{c}(s)$ is regular then there is locally only one possibility of extending $\tilde{c}$ in positive direction so that it remains geodesic. Extend $\tilde{c}$ as far as possible in positive as well as negative direction as long as the extension is unique, i.e. until the extension either hits a singularity or tends to infinity. \\
If the extended geodesic $\tilde{c}_{ext}$ is infinite, one of the two factors in the product shadow is a point and therefore the product has $\tilde{\nu}$-measure zero.\\ 
Thus, assume that the extension is finite. Parametrize the extended geodesic $\tilde{c}_{ext}:[-s_1,s_2] \to S$ so that $\left.\tilde{c}_{ext}\right|_{[0,s]}=\tilde{c}$. By uniqueness of the extension 
\begin{eqnarray*}
&&\partial sh_{\tilde{x}}(\tilde{c}(0))= \partial sh_{\tilde{x}}(\tilde{c}_{ext}(-s_1))\\
&&\partial sh_{\tilde{x}}(\tilde{c}(s))= \partial sh_{\tilde{x}}(\tilde{c}_{ext}(s_2)) 
\end{eqnarray*}
Both endpoints of $\tilde{c}_{ext}$ are singularities, so by Lemma \ref{Lemmeasshad} there is a universal constant $C(S)$ such that 
\begin{eqnarray*}
C(S)^{-1}exp(-e(S)(s_1+s/2))\leq &\nu_{\tilde{x}}(\partial sh_{\tilde{x}}(\tilde{c}_{ext}(-s_1)))&\leq C(S) exp(-e(S)(s_1+s/2))\\
C(S)^{-1}exp(-e(S)(s_2-s/2))\leq &\nu_{\tilde{x}}(\partial sh_{\tilde{x}}(\tilde{c}_{ext}(s_2)))&\leq C(S) exp(-e(S)(s_2-s/2))
\end{eqnarray*}
Altogether 
$$C(S)^{-2}exp(-e(S)l(\tilde{c}_{ext}))\leq \mu(A_{c}) \leq C(S)^2exp(-e(S)l(\tilde{c}_{ext}))$$ 
So the frequency a $\mu$-typical geodesic $\alpha \in \G S$ enters $c$ is proportional to
$$exp(-e(S)l(c_{ext})).$$
\end{proof}
 \bibliographystyle{alpha}	
 \bibliography{datab}
\noindent
MATHEMATISCHES INSTITUT DER UNIVERSIT\"AT BONN\\
ENDENICHER ALLEE 60,\\
53115 BONN, GERMANY\\
e-mail: klaus@math.uni-bonn.de
\end{document}